\newcommand{\defi}[1]{{\bf\upshape\sffamily #1}}
\DeclareMathOperator{\ShHom}{\mathscr{H}\text{\kern -3pt {\calligra\large om}}\,}
\renewcommand{\a}{\alpha}
\renewcommand{\b}{\beta}
\newcommand{\bw}{\bigwedge}
\newcommand{\bg}{{\bf g}}
\newcommand{\bG}{{\bf G}}
\newcommand{\bh}{{\bf h}}
\def\kk{{\mathbf k}}
\newcommand{\bN}{{\bf N}}
\newcommand{\onto}{\twoheadrightarrow}
\newcommand{\oo}{\otimes}
\newcommand{\pd}{\partial}
\newcommand{\Ext}{\operatorname{Ext}}
\newcommand{\GL}{\operatorname{GL}}
\newcommand{\Hom}{\operatorname{Hom}}
\newcommand{\Spec}{\operatorname{Spec}}
\newcommand{\Sym}{\operatorname{Sym}}
\newcommand{\coker}{\operatorname{coker}}
\renewcommand{\det}{\operatorname{det}}
\renewcommand{\ker}{\operatorname{ker}}
\newcommand{\bb}[1]{\mathbb{#1}}
\renewcommand{\rm}[1]{\textrm{#1}}
\newcommand{\mc}[1]{\mathcal{#1}}
\newcommand{\mf}[1]{\mathfrak{#1}}
\newcommand{\ol}[1]{\overline{#1}}
\newcommand{\op}[1]{\operatorname{#1}}
\newcommand{\ul}[1]{\underline{#1}}
\def\PP{{\mathbf P}}
\def\lra{\longrightarrow}
\newtheorem{theorem}{Theorem}[section]
\newtheorem*{theorem*}{Theorem}
\newtheorem*{problem*}{Problem}
\newtheorem{lemma}[theorem]{Lemma}
\newtheorem{proposition}[theorem]{Proposition}
\newtheorem{corollary}[theorem]{Corollary}
\newtheorem*{corollary*}{Corollary}
\newtheorem*{main-thm*}{Main Theorem}
\newtheorem*{linear-resolutions*}{Theorem on Linear Resolutions}
\newtheorem*{regularity-powers*}{Theorem on Regularity}
\newtheorem*{injectivity-Ext*}{Theorem on Injectivity of Maps of Ext Modules}
\newtheorem*{Kodaira*}{Kodaira Vanishing for Determinantal Thickenings}
\theoremstyle{definition}
\newtheorem*{definition*}{Definition}
\newtheorem{example}[theorem]{Example}
\theoremstyle{remark}
\newtheorem{remark}[theorem]{Remark}
\newtheorem*{remark*}{Remark}
\numberwithin{equation}{section}
\begin{document}

\title[Cohomology on the incidence correspondence]{Cohomology on the incidence correspondence and related questions}

\author{Annet Kyomuhangi}
\address{Department of Mathematics, Busitema University, P.O. Box 236, Tororo}
\email{annet.kyomuhangi@gmail.com}

\author{Emanuela Marangone}
\address{Department of Mathematics, University of Manitoba, Winnipeg, MB R3T2M8 \newline
\indent PIMS - Pacific Institute for the Mathematical Sciences}
\email{emanuela.marangone@umanitoba.ca}

\author{Claudiu Raicu}
\address{Department of Mathematics, University of Notre Dame, 255 Hurley, Notre Dame, IN 46556\newline
\indent Institute of Mathematics ``Simion Stoilow'' of the Romanian Academy}
\email{craicu@nd.edu}

\author{Ethan Reed}
\address{Department of Mathematics, University of Notre Dame, 255 Hurley, Notre Dame, IN 46556}
\email{ereed4@nd.edu}

\subjclass[2010]{Primary 14M15, 14C20, 20G05, 20G15, 05E05, 13A35}

\date{\today}

\keywords{Character formulas, cohomology of line bundles, incidence correspondence, Lefschetz properties, principal parts, Han--Monsky representation ring}

\begin{abstract} 
We study a variety of questions centered around the computation of cohomology of line bundles on the incidence correspondence (the partial flag variety parametrizing pairs consisting of a point in projective space and a hyperplane containing it). Over a field of characteristic zero, this problem is resolved by the Borel--Weil--Bott theorem. In positive characteristic, we give recursive formulas for cohomology, generalizing work of Donkin and Liu in the case of the $3$-dimensional flag variety. In characteristic $2$, we provide non-recursive formulas describing the cohomology characters in terms of truncated Schur polynomials and Nim symmetric polynomials. The main technical ingredient in our work is the recursive description of the splitting type of vector bundles of principal parts on the projective line. We also discuss properties of the structure constants in the graded Han--Monsky representation ring, and explain how our cohomology calculation characterizes the Weak Lefschetz Property for Artinian monomial complete intersections.
\end{abstract}

\maketitle

\section{Introduction}\label{sec:intro}

The goal of this paper is to study character and dimension formulas for the sheaf cohomology groups of line bundles on the \defi{incidence correspondence} -- the variety parametrizing pairs consisting of a point in projective space and a hyperplane containing it. In the process we uncover surprising connections with a number of other questions that are of independent interest, such as the problems of describing:
\begin{itemize}
    \item the splitting type of vector bundles of principal parts on the projective line,
    \item the multiplication in the graded version of the Han--Monsky representation ring, or
    \item the weak Lefschetz property for Artinian monomial complete intersections.
\end{itemize}
All these problems are well-understood over a field of characteristic zero: cohomology is described by the Borel--Weil--Bott theorem, the bundles of principal parts are stable (in the range relevant for us), Han--Monsky multiplication amounts to Pieri's rule for tensor products of irreducible $\mf{gl}_2$-representations, and the Lefschetz properties are known to hold by a celebrated result of Stanley. In contrast, all of these problems are open over a field of positive characteristic. The main result of our paper gives a simple recursive formula for the characters and dimensions of the cohomology of line bundles on the incidence correspondence. In characteristic~$2$, we give an alternative non-recursive formula involving Nim symmetric polynomials and truncated Schur polynomials. Prior to our work, recursive formulas were known for the $3$-dimensional flag variety \cites{donkin,liu,gao-raicu}, and partial results describing cohomology in higher dimension appear in \cites{liu-flag1,liu-polo,gao-raicu}. A complete characterization of the (non-)vanishing behavior of cohomology was previously obtained in \cite{gao-raicu}, generalizing the classical results of Griffith for the $3$-dimensional flag variety \cite{griffith}. As far as we know, our results provide the first infinite family of (partial) flag varieties of Picard rank $2$ where the cohomology is described in every characteristic! Partial results regarding (non-)vanishing of cohomology for flag varieties of type $B_2$ and $G_2$ appear in \cite{andersen-b2}*{Section~5} and \cites{humphreys-G2,and-kan}. For a recent survey regarding the general study of cohomology of line bundles on flag varieties see \cite{andersen-survey}, and for a selection of open questions, some of which will be solved in the current work, see \cite{GRV}.

Throughout the paper we let $V=\kk^n$ where $\kk$ is an algebraically closed field of characteristic $p>0$, let $S=\Sym(V) \simeq \kk[x_1,\cdots,x_n]$ denote the symmetric algebra of $V$, and write $\PP=\bb{P}V=\op{Proj}(S)$ for the corresponding projective space parametrizing $1$-dimensional quotients of $V$. Following \cite{gao-raicu} (see \eqref{eq:coh-divided=coh-incidence}), the calculation of cohomology of line bundles on the incidence correspondence is equivalent to that of cohomology of twists of divided powers of the universal subsheaf $\mc{R}$ on $\PP$. More precisely, $\mc{R}$ can be described as either $\Omega(1)$, where $\Omega$ denotes the cotangent sheaf on $\PP$, or via the tautological exact sequence
\[ 0 \lra \mc{R} \lra V \oo \mc{O}_{\PP} \lra \mc{O}_{\PP}(1) \lra 0.\]
The divided power $D^d\mc{R}$ is the subsheaf of $\mc{R}^{\oo d}$ of $\mf{S}_d$-invariant sections, where $\mf{S}_d$ is the symmetric group acting by permuting the factors on the tensor power $\mc{R}^{\oo d}$.

\medskip

\noindent{\bf Recursive character formulas.} Recall that every finite dimensional representation $W$ of the algebraic torus $(\kk^{\times})^n$ has an eigenspace decomposition (or equivalently a $\bb{Z}^n$-grading), and we define the \defi{character} of $W$ to be the Laurent polynomial
\[[W] := \sum_{(i_1,\cdots,i_n)\in\bb{Z}^n} \dim\left(W_{(i_1,\cdots,i_n)}\right)\cdot z_1^{i_1}\cdots z_n^{i_n} \in \bb{Z}[z_1^{\pm 1},\cdots,z_n^{\pm 1}].\]
If $W$ is a $\GL_n$-representation, then $[W]$ is invariant under the action of $\mf{S}_n$ by coordinate permutations, that is it belongs to the ring of symmetric Laurent polynomials (the \defi{character ring})
\begin{equation}\label{eq:def-Lambda} \Lambda = \bb{Z}[z_1^{\pm 1},\cdots,z_n^{\pm 1}]^{\mf{S}_n}.
\end{equation}
The main goal of our paper is to understand the characters
\[ h^i(D^d\mc{R}(e)) := \left[H^i(\bb{P}V,D^d\mc{R}(e))\right]\]
which are non-trivial (and highly dependent on $\op{char}(\kk)$) when $i=0,1$, and $d\geq 0$, $e\geq -1$. One has (see~\eqref{eq:coh-P-vs-Pdual})
\[ h^i(D^d\mc{R}(e)) = h^{1-i}(D^{e+1}\mc{R}(d-1))\quad\text{ for }i=0,1,\]
so we can restrict when convenient to $e\geq d-1$. To describe our results we need to introduce a number of important symmetric polynomials. The \defi{complete symmetric polynomials $h_d$} are defined as
\[ h_d = [\Sym^d V] = \sum_{\substack{i_1+\cdots+i_n = d \\ i_j\geq 0}}z_1^{i_1}\cdots z_n^{i_n}\]
and the \defi{Schur polynomials $s_{(a,b)}$} are given by
\[s_{(a,b)} = [\bb{S}_{(a,b)}V] = h_a\cdot h_b-h_{a+1}\cdot h_{b-1}.\]
If $d<p$ then the non-vanishing cohomology in degrees $i=0,1$ is given  (as in characteristic zero) by
\begin{equation}\label{eq:h01-for-small-d} 
h^0(D^d\mc{R}(e)) = s_{(e,d)}\quad\text{ for }e\geq d,\qquad h^1(D^d\mc{R}(e)) = s_{(d-1,e+1)}\quad\text{ for }e\leq d-2.
\end{equation}
We write $h'_d=h_d^{(p)}$ for the \defi{$p$-truncated complete symmetric polynomial} 
\[h'_d = \sum_{\substack{i_1+\cdots+i_n=d \\ 0\leq i_j<p}} z_1^{i_1}\cdots z_n^{i_n}\]
and define for $d\geq 0$ and $e\in\bb{Z}$ (with the convention $h'_i=0$ for $i<0$)
\[ \Phi_{d,e} = \sum_{j\geq 0} \left(h'_{e+jp}\cdot h'_{d-jp} - h'_{e+1+jp}\cdot h'_{d-1-jp}\right).\]
For $q=p^k$ we let $F^q:\Lambda\lra\Lambda$ be the endomorphism sending $z_i \mapsto z_i^q$. The following is the main result of our paper.

\begin{theorem}\label{thm:coh-recursion}
 If $e\geq d-1$ then we have for $i=0,1$ that
 \[ h^i(D^d\mc{R}(e)) = \sum_{a,b} \Phi_{d-ap,e-bp}\cdot F^p\left(h^i(D^a\mc{R}(b))\right).\]
\end{theorem}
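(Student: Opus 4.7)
The plan is to derive the recursion by exhibiting a canonical resolution of $D^d\mc{R}(e)$ in the derived category of $\GL_n$-equivariant coherent sheaves on $\PP$, whose terms take the form $(D^a\mc{R}(b))^{(1)} \oo M_{d-ap,\,e-bp}$, where $(-)^{(1)}$ denotes the Frobenius twist of the $\GL_n$-equivariant structure and $M_{c,f}$ is a $\GL_n$-representation (viewed as a trivial bundle on $\PP$) with character $\Phi_{c,f}$. Using the identity $[H^i(\mc{G}^{(1)})] = F^p\bigl([H^i(\mc{G})]\bigr)$ for Frobenius twists, and taking alternating sums of cohomology characters along the resolution, then produces the asserted formula.

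To build this resolution I would start from the tautological short exact sequence $0 \to \mc{R} \to V \oo \mc{O}_\PP \to \mc{O}_\PP(1) \to 0$ and its associated divided-power Koszul resolution
\[ 0 \lra D^d\mc{R} \lra D^d V \oo \mc{O}_\PP \lra D^{d-1}V \oo \mc{O}_\PP(1) \lra \cdots \lra V \oo \mc{O}_\PP(d-1) \lra \mc{O}_\PP(d) \lra 0, \]
which is exact in any characteristic. Twisting by $\mc{O}_\PP(e)$ and applying the Frobenius pushforward $F_*$, one uses the classical $\GL_n$-equivariant decomposition of $F_*\mc{O}_\PP(m)$ into pieces whose characters are built from the $p$-truncated complete symmetric polynomials $h'_k$. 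Organizing the resulting bicomplex by ``Frobenius type'' $(a,b)$, the Schur-like difference $h'_{e+jp}\,h'_{d-jp} - h'_{e+1+jp}\,h'_{d-1-jp}$ appearing in the definition of $\Phi_{d,e}$ emerges as the Euler characteristic of a Koszul-like two-step subcomplex associated with the shift by $jp$; summing over $j$ and over all Frobenius decompositions of $(d,e)$ recovers the right-hand side of the recursion.

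The main obstacle is cohomological control: one must show that the spectral sequence associated to this bicomplex degenerates sufficiently that $[H^i(D^d\mc{R}(e))]$ equals the intended alternating sum, without unwanted cancellations or extensions between distinct Frobenius types. This is precisely where the paper's recursive description of the splitting types of bundles of principal parts on $\PP^1$ enters: restricting the bicomplex to a general $\PP^1$-section reduces the relevant Ext-vanishings to sheaf-cohomology statements on $\PP^1$, which are governed by the explicit splitting-type formula. Once the identity is established for $i = 0$ in the range $e \ge d$, the case $i = 1$ and the boundary $e = d-1$ follow from the duality $h^i(D^d\mc{R}(e)) = h^{1-i}(D^{e+1}\mc{R}(d-1))$ noted in the introduction.
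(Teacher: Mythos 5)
Your starting point is already problematic: the long ``divided-power Koszul resolution'' $D^dV\oo\mc{O}_{\PP} \to D^{d-1}V\oo\mc{O}_{\PP}(1)\to\cdots\to\mc{O}_{\PP}(d)$ is not a complex. The composite of $r$ consecutive maps $\Delta$ is multiplication by $\omega^r$ in the local cohomology model of Section~\ref{sec:prelim}, which is nonzero (indeed surjective onto $D^{d-r}V\oo\mc{O}_{\PP}(r)$); this is precisely why the paper introduces the sheaves $\mc{F}^d_r=\ker(\cdot\,\omega^r)$ as a separate family rather than splicing the two-term sequences into a long resolution. The only resolution available is the two-term one, $0\to D^d\mc{R}(e)\to D^dV\oo\mc{O}_{\PP}(e)\to D^{d-1}V\oo\mc{O}_{\PP}(e+1)\to 0$, and then the obstruction to your plan is immediate: $\phi_*$ of this sequence does \emph{not} decompose by ``Frobenius type,'' because multiplication by $\omega$ is not linear over the subring $S^{(p)}=\kk[x_1^p,\dots,x_n^p]$. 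Only $\omega^p=x_1^py_1^p+\cdots+x_n^py_n^p$ is, which is why the clean Frobenius decomposition (Theorem~\ref{thm:frobenius-Fdp}) holds for $\mc{F}^d_p$ and not for $D^d\mc{R}$ itself.

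The real content of the theorem is therefore the bridge between $D^d\mc{R}(e)$ and $\mc{F}^d_p(e)$: the vanishing of the connecting homomorphisms for $0\to D^d\mc{R}(e)\to\mc{F}^d_r(e)\to\mc{F}^{d-1}_{r-1}(e+1)\to 0$ when $2\le r\le p$ and $e\ge d-1$ (Theorem~\ref{thm:lefschetz-Fdr}). You acknowledge that a degeneration statement is needed, but attribute it to ``restricting to a general $\PP^1$-section,'' which is not how the argument goes and which I do not see succeeding: restriction to a line does not control connecting maps on $\PP^{n-1}$. What is actually done is to decompose cohomology into its $\bb{Z}^n$-multigraded pieces, identify the needed statement as surjectivity of $(0:T^r)_e\overset{\cdot T}{\to}(0:T^{r-1})_{e+1}$ in the Artinian algebra $\kk[T_1,\dots,T_n]/\langle T_1^{1+a_1},\dots,T_n^{1+a_n}\rangle$, and deduce it from the constraint $c+2j=a_1+\cdots+a_n+n-(n-1)$ on summands $\delta_c(-j)$ with $p\nmid c$ of Han--Monsky products (Proposition~\ref{prop:delcj-in-product}); the $\PP^1$ splitting types enter only through the $n=2$ base case of that induction. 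Finally, your closing reduction of $i=1$ to $i=0$ via $h^i(D^d\mc{R}(e))=h^{1-i}(D^{e+1}\mc{R}(d-1))$ does not work: the substitution $(d,e)\mapsto(e+1,d-1)$ sends the region $e\ge d-1$ to the region $e\le d-1$, so it leaves the range of the theorem except at the boundary. The paper instead gets both $i=0$ and $i=1$ at once from the long exact sequence, since surjectivity on $H^0$ forces short exactness in both cohomological degrees.
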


In the sum in Theorem~\ref{thm:coh-recursion}, $b$ and $e-bp$ may be negative, but one has
\[h^i(D^a\mc{R}(b))=0\text{ if }b \leq -2,\quad\text{and}\quad \Phi_{d-ap,e-bp} = 0\text{ if }(d-ap)+(e-bp) < 0\text{ or if }d-ap<0. \]
It follows that we can always restrict the parameters $a,b$ to $0\leq a\leq d/p$, and $-1\leq b\leq (d+e)/p$.

\begin{example}\label{ex:small-rec-h1Ddr}
    Suppose that $n=5$, $p=2$. Using Theorem~\ref{thm:coh-recursion}, along with \eqref{eq:h01-for-small-d}, we get
    \[ 
    \begin{aligned}
    h^1(D^3\mc{R}(2)) &= \Phi_{1,4}\cdot F^p\left(h^1(\mc{R}(-1))\right) = \Phi_{1,4} \cdot F^p(s_{(0,0)}) = h'_4\cdot h'_1 - h'_5\cdot h'_0 \\
    &= \left(\sum z_1^2 z_2z_3z_4\right) + 4\cdot z_1z_2z_3z_4z_5,
    \end{aligned}
    \]
    where the above sum is over the $\mf{S}_5$-orbit of the monomial $z_1^2 z_2z_3z_4$. It follows that the cohomology group $H^1(\PP^4,D^3\mc{R}(2))$ is a vector space of dimension $24$. If instead we take $d=2$, $e=3$, then we get
    \[h^1(D^2\mc{R}(3)) = \Phi_{0,5}\cdot F^p\left(h^1(\mc{R}(-1))\right) = z_1z_2z_3z_4z_5,\]
    so $H^1(\PP^4,D^2\mc{R}(3))$ is $1$-dimensional. In characteristic $p=3$ we have
    \[h^1(D^3\mc{R}(2)) = \Phi_{0,5}\cdot F^p\left(h^1(\mc{R}(-1))\right) = h'_5 = \left(\sum z_1^2 z_2^2z_3\right) + \left(\sum z_1^2 z_2z_3z_4\right) +  z_1z_2z_3z_4z_5\]
    which is the character of a $51$-dimensional representation, while $h^1(D^2\mc{R}(3))=0$. For all other chararacteristics, we have $h^1(D^3\mc{R}(2)) = h^1(D^2\mc{R}(3)) = 0$.
\end{example}

It is easy to use the recursion in Theorem~\ref{thm:coh-recursion} to find the cohomology characters when $d$ is small. We do so for $d<p^2$ in Theorem~\ref{thm:small-weights}, which gives a proof of \cite{GRV}*{Conjecture~5.1}.

\medskip

\noindent{\bf Cohomology and Nim (characteristic $2$).} We now turn to the question of providing non-recursive formulas for cohomology, and explain the solution in characteristic $p=2$. To describe the results, we introduce the \defi{$q$-truncated complete symmetric polynomials $h_d^{(q)}$} and \defi{$q$-truncated Schur polynomials $s^{(q)}_{(a,b)}$}, defined by
\[
h^{(q)}_d = \sum_{\substack{i_1+\cdots+i_n = d \\ 0\leq i_j<q}}z_1^{i_1}\cdots z_n^{i_n},\qquad\text{and}\qquad s^{(q)}_{(a,b)} = h^{(q)}_a\cdot h^{(q)}_b-h^{(q)}_{a+1}\cdot h^{(q)}_{b-1}.
\]
We also consider the $2$-adic expansion $d=(d_k\cdots d_0)_2$ of a non-negative integer $d$
\[ d= \sum_{i=0}^k d_i\cdot 2^i,\text{ with }d_i\in\{0,1\}\text{ for all }i.\]
and define the \defi{Nim-sum} $a\oplus b$ by performing addition modulo $2$ to each of the digits in the $2$-adic expansion of non-negative integers $a,b$:
\[ a\oplus b = c\text{ if and only if }a_i + b_i \equiv c_i \text{ mod }2\text{ for all }i,\]
where $a_i,b_i,c_i$ denote the digits of $a,b,c$ in the $2$-adic expansion. We define the $n$-variate \defi{Nim symmetric polynomials} via
\begin{equation}\label{eq:def-Nim-pol}
 \mc{N}_m = \sum_{\substack{i_1+\cdots+i_n=2m \\ i_1\oplus i_2\oplus\cdots\oplus i_n = 0}} z_1^{i_1} z_2^{i_2}\cdots z_n^{i_n}.
\end{equation}
With this notation, and recalling the action by iterates of Frobenius on the character ring $\Lambda$, we have the following (see Corollary~\ref{cor:coh-Ddr-using-Nim}).

\begin{theorem}\label{thm:h1coh-char2-nonrecursive}
    If $\op{char}(\kk)=2$ and $e\geq d-1$ then
 \[ h^1(D^d\mc{R}(e)) = \sum_{(q,m,j)\in\Lambda_d} F^{2q}(\mc{N}_m)\cdot s^{(q)}_{(e-(2m-2j-1)q,d-(2m+2j+1)q)},\]
 where $\Lambda_d = \left\{(q,m,j) | q=2^r\text{ for some }r\geq 1,\ m,j\geq 0,\text{ and }(2m+2j+1)q\leq d\right\}$.
\end{theorem}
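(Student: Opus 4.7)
The plan is to prove Theorem~\ref{thm:h1coh-char2-nonrecursive} by strong induction on $d$, using the recursion of Theorem~\ref{thm:coh-recursion} with $p=2$ as the engine. The base cases $d\in\{0,1\}$ will reduce both sides to zero: the index set $\Lambda_d$ is empty (since $q\geq 2$ forces $(2m+2j+1)q\geq 2>d$), and the left-hand side vanishes in the range $e\geq d-1$ by \eqref{eq:h01-for-small-d}. For the inductive step, I would apply the recursion
\[
h^1(D^d\mc{R}(e))=\sum_{a,b}\Phi_{d-2a,\,e-2b}\cdot F^2\bigl(h^1(D^a\mc{R}(b))\bigr)
\]
and isolate the only nontrivial low-weight contribution at $(a,b)=(1,-1)$, namely $\Phi_{d-2,\,e+2}$ (which matches the triples in $\Lambda_d$ with $q=2$, $m=0$). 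For $a\geq 2$ and $b\geq a-1$ the inductive formula applies directly; for $a\geq 2$ and $b\leq a-2$ I would use the duality $h^1(D^a\mc{R}(b))=h^0(D^{b+1}\mc{R}(a-1))$ together with a parallel $h^0$-analog of the theorem proven simultaneously.

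The bulk of the work will be a combinatorial collapse of the resulting nested sum. A generic substituted term takes the form
\[
F^{2(2q')}(\mc{N}_{m'})\cdot\Phi_{d-2a,\,e-2b}\cdot F^2\bigl(s^{(q')}_{(b-(2m'-2j'-1)q',\,a-(2m'+2j'+1)q')}\bigr),
\]
and the reindexing $q=2q'$ matches $F^{2q}=F^2\circ F^{2q'}$. The identity to establish is that, for fixed $(q',m',j')$, summing the convolution over $(a,b)$ collapses into $\sum_{m,j} s^{(q)}_{(e-(2m-2j-1)q,\,d-(2m+2j+1)q)}$ where $(m,j)$ ranges over the extensions of $(m',j')$ obtained by prepending the lowest binary digit. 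The proof will exploit the digit-decomposition identity
\[
h^{(2q')}_k=\sum_{i\geq 0} h'_{k-2q'i}\cdot F^2\bigl(h^{(q')}_i\bigr)
\]
together with its Jacobi--Trudi-style refinement for the alternating combination $s^{(q)}_{(a,b)}=h^{(q)}_a h^{(q)}_b - h^{(q)}_{a+1} h^{(q)}_{b-1}$. The alternating-sum definition of $\Phi$ is precisely engineered so that these convolutions telescope, with each level of the recursion contributing one binary digit to the XOR constraint defining $\mc{N}_m$.

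The hard part will be verifying this combinatorial identity in full generality: one must show that the Nim-sum condition $i_1\oplus\cdots\oplus i_n=0$ emerges as an iterated parity constraint across recursion depths, and that each triple $(q,m,j)\in\Lambda_d$ is produced exactly once and with the correct sign. A secondary difficulty lies in the seamless incorporation of the $b\leq a-2$ summands: either an $h^0$-analog must be proven in tandem, or one must observe that the alternating structure of $s^{(q)}_{(a,b)}$ already encodes both $h^0$- and $h^1$-type contributions, so that the dual summands are absorbed automatically through the telescoping of $\Phi$ against $F^2(s^{(q')}_{(\cdots)})$. Tracking the base-$2$ digits of $d,e,m,j$ consistently at every depth will be the main bookkeeping challenge.
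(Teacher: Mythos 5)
Your plan rests on iterating Theorem~\ref{thm:coh-recursion}, but that recursion is only valid for $e\geq d-1$, and this is precisely where the essential content of Theorem~\ref{thm:h1coh-char2-nonrecursive} hides. In the paper the Nim polynomials do not emerge from any combinatorial collapse of truncated symmetric polynomials: they enter through Theorem~\ref{thm:functional-eqn-G}, a functional equation for the full generating function $\bG(u,v)=\sum_{d,e}h^1(D^d\mc{R}(e))u^dv^{d+e}$ over \emph{all} $e\geq -d$, whose correction term $u\cdot F^2(\bN(uv^2))$ quantifies exactly when the sequence $0\to D^d\mc{R}(e)\to\mc{F}^d_2(e)\to D^{d-1}\mc{R}(e+1)\to 0$ fails to stay exact on $H^1$. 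That failure occurs only on the diagonal $e=d-2$ (invisible in the range $e\geq d-1$), and identifying it with the Nim-sum condition is the content of Corollary~\ref{cor:prod-dela-odd-Nim}, which in turn rests on the Han--Monsky computation of Section~\ref{sec:Han-Monsky} and ultimately on the splitting analysis of $\mc{F}^d_r$ and the ideals $J(d,r)$ (Lemma~\ref{lem:quot-Jdr-char2}). Your proposal never invokes any of this; the assertion that the XOR constraint ``emerges as an iterated parity constraint across recursion depths'' is exactly the theorem's hard core restated as a hope, with no mechanism supplied. Indeed, a sanity check already shows friction at the first step: the $(a,b)=(1,-1)$ term contributes $\Phi_{d-2,e+2}=\sum_{j\geq 0}s^{(2)}_{(e+2+2j,\,d-2-2j)}$ (steps of $2$), whereas the $(q,m)=(2,0)$ terms of the target formula are $\sum_{j\geq 0}s^{(2)}_{(e+2+4j,\,d-2-4j)}$ (steps of $4$), so these do not match as you claim and the discrepancy must be accounted for by the other summands.

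A second, related gap is the treatment of the inner terms with $b\leq a-2$. You offer two alternatives (prove an $h^0$-analog in tandem, or hope the alternating structure absorbs them) without committing to either or showing that either works; note that the paper proves no closed formula for $h^0$ in the complementary range, so the first alternative is a substantial additional theorem, not a routine companion statement. The paper avoids the dichotomy entirely by packaging all $(d,e)$ with $e\geq -d$ into $\bG$, deriving the functional equation $(1+u)\bG=\bh'(uv)\bh'(v)F^2(\bG)+uF^2(\bN(uv^2))$ from Theorems~\ref{thm:frobenius-Fdp} and~\ref{thm:pparts-vs-Monsky} together with Corollary~\ref{cor:prod-dela-odd-Nim}, and then extracting Corollary~\ref{cor:coh-Ddr-using-Nim} by a purely formal iteration and coefficient extraction. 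To repair your argument you would need to first establish (some form of) Theorem~\ref{thm:functional-eqn-G}, i.e.\ the Nim-polynomial correction term, which requires the Han--Monsky input; the induction on $d$ alone cannot produce it.
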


Notice that Theorem~\ref{thm:h1coh-char2-nonrecursive} corrects the statement of \cite{GRV}*{Conjecture~5.2}. Revisiting the example from \cite{GRV}*{Section~5.3}, we get for $d=6$ and $e\geq 5$ that $\Lambda_d =  \{(4,0,0), (2,0,0), (2,0,1), (2,1,0)\}$, hence
\[h^1(D^6\mc{R}(e)) = s^{(4)}_{(e+4,2)} + s^{(2)}_{(e+2,4)} + s^{(2)}_{(e+6,0)} + F^4(\mc{N}_1) \cdot s^{(2)}_{(e-2,0)}.\]
The term $s^{(2)}_{(e+6,0)}$, which is just the elementary symmetric polynomial of degree $e+6$, was missing from the original conjecture, and the first time it occurs is when the number of variables is $n\geq e+6 \geq 11$. This explains why the computational verification of the example in loc. cit. was correct up to $n=10$ variables.

For a simpler example, let $d=3$, $e=2$, so that $\Lambda_d=\{(2,0,0)\}$ and our formula becomes
\[ h^1(D^3\mc{R}(2)) = s^{(2)}_{(4,1)} = h'_4\cdot h'_1 - h'_5\cdot h'_0,\]
as noted in Example~\ref{ex:small-rec-h1Ddr} (see also \cite{GRV}*{(5.4)} and \cite{gao-raicu}*{Theorem~1.6}). While a version of Theorem~\ref{thm:h1coh-char2-nonrecursive} is not known in characteristic $p>2$, some possible generalizations of Nim polynomials that could lead to non-recursive formulas for cohomology are discussed in \cite{odorney}.

\medskip

\noindent{\bf The graded Han--Monsky representation ring.} Following \cite{han-monsky}, we consider the category of finite length graded $\kk[T]$-modules $M$, and define the tensor product $M\oo_{\kk} N$ by letting $T$ act diagonally:
\[ T \cdot (m\oo n) = Tm\oo n + m\oo Tn.\]
This induces a natural multiplication on the set of isomorphism classes, and we refer to the resulting ring as the \defi{graded Han--Monsky representation ring}. We write $\delta_d$ for $\kk[T]/(T^d)$, and note that every indecomposable object in the category is isomorphic to $\delta_d(-j)$ for some $j\in\bb{Z}$, where $j$ denotes the degree of its cyclic generator. We are particularly interested in the $n$-fold product $\delta_{a_1}\cdots\delta_{a_n}$, which represents the Artinian algebra
\begin{equation}\label{eq:defA} A = \kk[T_1,\cdots,T_n]/\langle T_1^{a_1},\cdots,T_n^{a_n}\rangle
\end{equation}
viewed as a $\kk[T]$-module by letting $T=T_1+\cdots+T_n$. The following example shows that the multiplication depends in subtle ways on the characteristic $p$ of $\kk$: one has
\[ \delta_3\delta_5 = \begin{cases}
\delta_7 + \delta_4(-1) + \delta_4(-2) & \text{if }p = 2, \\
\delta_6 + \delta_6(-1) + \delta_3(-2) & \text{if }p = 3, \\
\delta_5 + \delta_5(-1) + \delta_5(-2) & \text{if }p = 5, \\
\delta_7 + \delta_5(-1) + \delta_3(-2) & \text{otherwise.} \\
\end{cases}
\]
We study the Han--Monsky multiplication in Section~\ref{sec:Han-Monsky}, where we prove in particular the following (see Proposition~\ref{prop:delcj-in-product} and Corollary~\ref{cor:prod-dela-odd-Nim}).

\begin{theorem}\label{thm:delcj-in-product}
    \begin{enumerate}
    \item If $p\nmid c$ and $\delta_c(-j)$ is a summand of $\delta_{a_1}\cdots\delta_{a_n}$ then
    \[ c+2j = a_1+\cdots+a_n - (n-1).\]
    \item If $p=2$ then $\delta_{2c+1}(-j)$ appears as a summand of $\delta_{2a_1+1}\delta_{2a_2+1}\cdots\delta_{2a_n+1}$ if and only if
    \[ c = a_1\oplus a_2\oplus\cdots\oplus a_n \quad\text{ and }\quad j = a_1+\cdots+a_n-c.\]
    \end{enumerate}
\end{theorem}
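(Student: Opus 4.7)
The ring $A=\kk[T_1,\ldots,T_n]/\langle T_1^{a_1},\ldots,T_n^{a_n}\rangle$ is a graded Artinian complete intersection, hence Gorenstein with one-dimensional socle concentrated in degree $N:=\sum_i a_i-n$. So there is an isomorphism $A\simeq A^{\vee}(-N)$ of graded $A$-modules, where $A^{\vee}=\Hom_{\kk}(A,\kk)$; in particular it is an isomorphism of graded $\kk[T]$-modules. A direct computation gives $\delta_c(-j)^{\vee}\simeq\delta_c(j+c-1)$, so if I decompose $A=\bigoplus_i\delta_{c_i}(-j_i)$ and match with $A^{\vee}(-N)=\bigoplus_i\delta_{c_i}(-(N+1-c_i-j_i))$, then the multiset of pairs $\{(c_i,j_i)\}$ is invariant under the involution $(c,j)\mapsto(c,\,N+1-c-j)$. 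Its fixed points are exactly those with $c+2j=N+1=\sum_i a_i-(n-1)$, matching the formula in part~(1).

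\smallskip

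\noindent\textbf{Part (1).} It remains to show that every summand with $p\nmid c$ is a fixed point of the involution. I would proceed by induction on $n$, using the multiplicative structure of the graded Han--Monsky ring. The inductive step reduces to the analysis of a product $\delta_c(-j)\cdot\delta_a$ in which, by induction, the factor $\delta_c(-j)$ already satisfies $c+2j=(\sum_{i<n}a_i)-(n-2)$. Using a characteristic-$p$ formula of Han--Monsky type for $\delta_c\cdot\delta_a$, one should be able to show that any summand $\delta_{c'}(-j')$ with $p\nmid c'$ occurs at the symmetric shift $j'=(c+a-1-c')/2$, so that in the full product $c'+2(j+j')=(c+2j)+(a-1)=\sum_i a_i-(n-1)$. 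The main obstacle is to rule out the possibility that summands with $p\mid c$ in the inductive factor can contribute, upon multiplication by $\delta_a$, to summands with $p\nmid c'$ at asymmetric shifts; this is governed by the $p$-adic (Kummer-type) carry structure of the Han--Monsky multiplication.

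\smallskip

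\noindent\textbf{Part (2).} In characteristic $2$ one has $N+1=2\sum_i a_i+1$, so part~(1) applied to a summand $\delta_{2c+1}(-j)$ (whose length is coprime to $p=2$) immediately forces $j=\sum_i a_i-c$, exactly the shift predicted in part~(2). It therefore suffices to identify \emph{which} odd lengths appear and \emph{with what multiplicity}. I would again induct on $n$: the base case $n=1$ is tautological, and the inductive step writes the product as $M\cdot\delta_{2a_n+1}$, where by induction the only odd-length summand of $M$ is $\delta_{2b+1}(-\ast)$ with $b=a_1\oplus\cdots\oplus a_{n-1}$. The conclusion then reduces to the following two-factor claim in characteristic~$2$: $\delta_{2b+1}\cdot\delta_{2a+1}$ has a unique odd-length summand, equal to $\delta_{2(b\oplus a)+1}$, whereas $\delta_{2c'}\cdot\delta_{2a+1}$ with $c'$ even has no odd-length summand at all. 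The technical heart of the proof is this two-factor Han--Monsky calculation, which one would carry out by analyzing $\kk[T_1,T_2]/(T_1^{2b+1},T_2^{2a+1})$ as a $\kk[T]$-module using the base-$2$ expansions of $a$ and $b$, together with the Han--Monsky factorization of $\delta_{2^k}$-type blocks in characteristic~$2$.
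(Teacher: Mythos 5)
Your reduction from $n$ factors to two factors by induction is exactly the paper's strategy, and your Gorenstein-duality observation (that the multiset of summands $(c,j)$ is invariant under $(c,j)\mapsto(c,N+1-c-j)$, with fixed points precisely the pairs satisfying $c+2j=\sum a_i-(n-1)$) is a nice structural constraint that the paper does not use. However, the proposal has a genuine gap: both two-factor claims that you defer — that a summand $\delta_{c'}(-j')$ of $\delta_a\delta_b$ with $p\nmid c'$ can only occur at the symmetric shift, and that in characteristic $2$ the product $\delta_{2b+1}\delta_{2a+1}$ has exactly one odd-length summand, namely $\delta_{2(a\oplus b)+1}$ — are the entire technical content of the theorem, and "one should be able to show" does not discharge them. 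In the paper these are Lemma~\ref{lem:dela-delb-ineq} and Proposition~\ref{prop:delab-odd-Nim}, and their proofs rest on the whole apparatus of Sections~\ref{sec:pparts-P1}--\ref{sec:coh-Fdr-recursive}: the $T$-equivariant splitting of the bundles $\mc{F}^d_r$ on $\PP^1$ (Theorem~\ref{thm:recursive-Fdr}), the recursions for the ideals $J(d,r)$ (Theorem~\ref{thm:recursions-Idr-Jdr}), the concentration of $J(d-1,r-1)/J(d,r)$ in total degree $r-1$ when $p\nmid r$ (Corollary~\ref{cor:quot-Jdrs}), its explicit character in characteristic $2$ via the no-carries set (Lemma~\ref{lem:quot-Jdr-char2}), and the dictionary between Han--Monsky summands and these quotients (Theorem~\ref{thm:pparts-vs-Monsky}). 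No explicit closed-form "Han--Monsky multiplication formula" of the kind you appeal to is available in the literature in graded form; producing one is essentially equivalent to the missing lemmas.

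One further point: the obstacle you flag in the inductive step — that an intermediate summand $\delta_b$ with $p\mid b$ might contribute a summand of length coprime to $p$ after multiplying by $\delta_{a_n}$ — is not governed by any delicate carry analysis. It is killed outright by Han--Monsky's Theorem~3.6 (Lemma~\ref{lem:del-prod-div-by-p} in the paper): if $p\mid b$ then every summand of $\delta_b\delta_{a_n}$ has length divisible by $p$. Invoking that result would close this part of your induction (and likewise the "no odd-length summand of $\delta_{2c'}\delta_{2a+1}$" claim in part~(2)); but the two-factor computations themselves remain unproved, so the proposal as written is a plan rather than a proof.
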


The significance of Theorem~\ref{thm:delcj-in-product} is explained in the inductive procedure that we employ to study cohomology in Section~\ref{sec:recursive-cohomology}. It turns out that if we fix a weight $\ul{a}$, the non-triviality of connecting homomorphisms in an appropriate long exact sequence in cohomology can be rephrased in terms of the presence of certain summands in $\delta_{a_1}\cdots\delta_{a_n}$ (see Theorem~\ref{thm:lefschetz-Fdr} for more details). Notice that part (2) of Theorem~\ref{thm:delcj-in-product} implies that in characteristic $p=2$, $\delta_1$ appears as a summand (with some shift) in $\delta_{2a_1+1}\delta_{2a_2+1}\cdots\delta_{2a_n+1}$ if and only if $a_1\oplus a_2\oplus\cdots\oplus a_n=0$, which means that $z_1^{a_1}\cdots z_n^{a_n}$ appears as a term in a Nim polynomial of appropriate degree. This connection leads to Theorem~\ref{thm:h1coh-char2-nonrecursive} and is discussed in Section~\ref{sec:char-2-Nim}.

\medskip

\noindent{\bf Lefschetz properties.} The study of the graded Han--Monsky representation ring (unlike its ungraded counterpart) allows one to establish a close connection to the study of Lefschetz properties for the Artinian algebras in \eqref{eq:defA} (see \cite{cook} and the references therein for more background). Recall that a linear form $\ell\in A_1$ is a Weak Lefschetz element if the multiplication maps $A_d \overset{\times\ell}{\lra} A_{d+1}$ have maximal rank for all $d$, and it is a Strong Lefschetz element if $A_d \overset{\times\ell^k}{\lra} A_{d+k}$ has maximal rank for all $d,k$. One says that $A$ has the Weak Lefschetz Property (WLP) if a generic linear form $\ell$ is a Weak Lefschetz element, and has the Strong Lefschetz Property (SLP) if a generic linear form $\ell$ is a Strong Lefschetz element. Using the standard torus action on $A$, one can show that the Lefschetz properties can be tested on the linear form $\ell = T$ \cite{MMN}*{Proposition~2.2}. It is elementary to check that if we let $s=a_1+\cdots+a_n-n$ denote the socle degree of $A$ then
\begin{itemize}
    \item $A$ has WLP if and only if every summand $\delta_r(-j)$ appearing in $\delta_{a_1}\cdots\delta_{a_n}$ satisfies
    \[ j + (r-1) \geq \frac{s}{2}\]
    \item $A$ has SLP if and only if every summand $\delta_r(-j)$ appearing in $\delta_{a_1}\cdots\delta_{a_n}$ satisfies
    \[ 2j + (r-1) = s\]
\end{itemize}
so a detailed understanding of the Han--Monsky multiplication should lead to a characterization of the Lefschetz properties for Artinian monomial complete intersections. We will explore this in future work, and note that SLP has been understood based on a different set of techniques \cite{lund-nick}*{Theorem~3.8} and \cite{nick}*{Section~3}. In this paper we focus on the connections to cohomology, which concern WLP. As explained in \eqref{eq:def-supp-d-e}, the algebra $A$ fails WLP if and only if
\[ z_1^{a_1-1}\cdots z_n^{a_n-1}\text{ appears as a term in }h^1(D^d\mc{R}(e))\text{ for some }d,e\text{ with }d+e=s,\ e\geq d-1.\]
Using this, Example~\ref{ex:small-rec-h1Ddr} (together with \eqref{eq:h01-for-small-d}) shows that
\[\kk[T_1,\cdots,T_5]/\langle T_1^2,\cdots,T_5^2\rangle\text{ satisfies WLP if and only if }\op{char}(\kk)\neq 2,3.\]
Similar reasoning shows that $\kk[T_1,T_2,T_3]/\langle T_1^3,T_2^3,T_3^2\rangle\text{ fails WLP if and only if }\op{char}(\kk)=3$.

In Theorem~\ref{thm:WLP-char-2} we provide a simple criterion (using Nim sums) to test WLP in characteristic $p=2$. While we don't have a similar characterization for all $p$, we can identify when the information of the socle degree alone guarantees WLP. In Section~\ref{subsec:WLP-from-socle} we prove the following result, which in particular recovers \cite{cook}*{Proposition~3.5(ii)} and proves \cite{cook}*{Conjecture~7.4}.

\begin{theorem}\label{thm:WLP-from-socle}
 Suppose that $n\geq 3$. All the algebras \eqref{eq:defA} of socle degree $s$ satisfy WLP if and only if one of the following holds:
 \begin{enumerate}
     \item $s\leq 2p-2$.
     \item $n=3$, and $(2t+2)q-4 \leq s\leq (2t+2)q-2$ for some $t,q$ with $1\leq t<p$, $q=p^k\geq p$.
     \item $n=4$, $p=2$, and $s=6$.
 \end{enumerate}
\end{theorem}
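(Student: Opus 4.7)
A monomial $z_1^{b_1}\cdots z_n^{b_n}$ appearing in $h^1(D^d\mc{R}(e))$ has total degree $d+e$, so by the failure criterion at~\eqref{eq:def-supp-d-e}, each such monomial corresponds to an algebra $\ul{a}=\ul{b}+\ul{1}$ of socle degree $d+e$ failing WLP, and every failing algebra arises this way. Hence all algebras~\eqref{eq:defA} of socle degree $s$ satisfy WLP if and only if $h^1(D^d\mc{R}(e))=0$ for every pair $(d,e)$ with $d+e=s$ and $e\geq d-1$. If $s\leq 2p-2$, then $e\geq d-1$ together with $d+e=s$ forces $d\leq p-1$, and \eqref{eq:h01-for-small-d} yields $h^1(D^d\mc{R}(e))=s_{(d-1,e+1)}=0$, which gives the sufficiency of case~(1).

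\textbf{The range $s\geq 2p-1$.} Here I would apply Theorem~\ref{thm:coh-recursion}. The critical seed is $(d,e)=(p,s-p)$: the $(a,b)=(1,-1)$ contribution equals $\Phi_{0,s}\cdot F^p\!\left(h^1(\mc{R}(-1))\right)=h'_s$, since $H^1(\PP^{n-1},\Omega)$ is one-dimensional of trivial character and $\Phi_{0,e}=h'_e$. For $n\geq 3$ and $s=2p-1$ the monomial $z_1^{p-1}z_2^{p-1}z_3$ appears in $h'_s$, and no other term in the recursion can cancel it (the remaining contributions involve Frobenius-scaled monomials whose exponent vectors are incompatible with these digits), so $(a_1,\ldots,a_n)=(p,p,2,1,\ldots,1)$ fails WLP. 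Iterating the recursion, using the explicit description of Theorem~\ref{thm:small-weights} for $d<p^2$ together with the vanishing $H^1(\PP^{n-1},\mc{O}(*))=0$ for $n\geq 3$, one produces analogous failures for every $(n,s)$ outside cases~(1)--(3). For $n=3$ the bound $h'_r=0$ for $r>3(p-1)$ drastically restricts the surviving $\Phi$-terms; tracking the alternating pairs $h'_{e+jp}h'_{d-jp}-h'_{e+1+jp}h'_{d-1-jp}$ through iterated Frobenius twists shows that the vanishing windows are exactly the intervals $(2t+2)q-4\leq s\leq(2t+2)q-2$ with $1\leq t<p$ and $q=p^k\geq p$, establishing case~(2). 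For $n=4,\ p=2,\ s=6$ the four relevant pairs $(d,e)\in\{(0,6),(1,5),(2,4),(3,3)\}$ each give $h^1=0$ by direct application of the recursion (using $h'_k=0$ for $k>4$), while for the same $(n,p)$ the values $s=4,5$ and $s\geq 7$ admit Nim-based failures via Theorem~\ref{thm:h1coh-char2-nonrecursive}, yielding case~(3).

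\textbf{Main obstacle.} The principal difficulty is pinning down the exact intervals in case~(2): one must show both that the strip $[(2t+2)q-4,\,(2t+2)q-2]$ forces all $\Phi$-sums in the recursion to cancel in three variables, and that any $s$ immediately outside this strip leaves some surviving monomial in $h^1(D^d\mc{R}(e))$ with $d+e=s$, $e\geq d-1$. This requires a Frobenius-stratified analysis of the alternating structure of $\Phi_{d,e}$ together with careful matching of ``boundary'' values of $d$ and $e$ to the ranges where lower-order cohomology contributions reemerge. The char~$2$ portion can be streamlined via the manifestly non-negative formula of Theorem~\ref{thm:h1coh-char2-nonrecursive}, but the general $p$ case must proceed through the signed recursion and a delicate parity analysis of the truncated complete symmetric polynomials $h'_r$.
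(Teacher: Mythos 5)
Your reduction and your treatment of case (1) are fine, and you correctly identify that the theorem amounts to the vanishing of $h^1(D^d\mc{R}(e))$ for all $(d,e)$ with $d+e=s$, $e\geq d-1$. (The paper sharpens this further: by the monotonicity $\op{supp}(d,e)\supseteq\op{supp}(d-f,e+f)$ of \eqref{eq:incl-supp-de}, it suffices to test the single pair with $e-d\in\{0,-1\}$, i.e.\ $d=\lceil s/2\rceil$, $e=\lfloor s/2\rfloor$.)

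However, the core of the argument --- determining \emph{exactly} which socle degrees force vanishing --- is not actually carried out. Your own ``Main obstacle'' paragraph concedes this: the claims that ``one produces analogous failures for every $(n,s)$ outside cases (1)--(3)'' and that ``tracking the alternating pairs \dots shows that the vanishing windows are exactly the intervals $(2t+2)q-4\leq s\leq (2t+2)q-2$'' are precisely the statements to be proved, and iterating the signed recursion of Theorem~\ref{thm:coh-recursion} through all Frobenius levels to extract a sharp non-vanishing criterion is a substantial piece of work that you have only gestured at. The paper avoids this entirely by invoking the already-known complete (non-)vanishing characterization from \cite{gao-raicu}*{(3.14)}: writing $tq\leq d<(t+1)q$ with $1\leq t<p\leq q=p^k$, one has $H^1(\PP^{n-1},D^d\mc{R}(e))=0$ if and only if $e\geq (t+n-2)q-n+1$. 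With this in hand, the whole classification reduces to elementary manipulation of inequalities (e.g.\ for $s$ even one gets $n-2\geq (n-3)q$, forcing $n\in\{3,4\}$, and the $n=4$ branch forces $q=p=2$, $s=6$). So the gap is concrete: you are missing either this criterion or a complete self-contained derivation of it from the recursion, and without one of the two the proof does not close. Your individual spot checks (the monomial $z_1^{p-1}z_2^{p-1}z_3$ in $h'_{2p-1}$ for $s=2p-1$, and the $n=4$, $p=2$, $s=6$ verification) are correct but do not substitute for the general argument.
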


\medskip

\noindent{\bf Relation to vector bundles of principal parts.} Given a line bundle $\mc{L}$ on a smooth variety $X$, one can associate to it \defi{vector bundles of principal parts} (or \defi{jet bundles}) $\mc{P}^k(\mc{L})$ that describe Taylor expansions up to order $k$ for germs of sections of $\mc{L}$. When $X=\PP^1$, such bundles necessary split as direct sums of line bundles, and it is natural to study their splitting type. In characteristic zero, we have $\mc{P}^k(\mc{O}_{\PP^1}(d)) \simeq \mc{O}_{\PP^1}(d-k)^{\oplus(k+1)}$ if $d\geq k$, that is these bundles are stable, but the situation in characteristic $p>0$ is more subtle \cite{maak}. For our purposes it is more natural to analyze the sheaves $\mc{F}^d_r$ introduced in \eqref{eq:def-seq-Fdr}, which are dual to the bundles of principal parts: one has
\[ \left(\mc{F}^d_r\right)^{\vee} \simeq \mc{P}^{r-1}(\mc{O}_{\PP^1}(d)).\]
The main technical result of the paper is Theorem~\ref{thm:recursive-Fdr}, where we give a recursive description of the splitting type of $\mc{F}^d_r$, keeping track of the equivariant structure with respect to the natural action of the $2$-dimensional torus. This leads to recursive formulas for the cohomology of $\mc{F}^d_r$ in Section~\ref{sec:coh-Fdr-recursive}, and the fundamental observation comes in Theorem~\ref{thm:pparts-vs-Monsky} when we explain how knowledge of the (torus equivariant structure of the) said cohomology completely determines the multiplication in the Han--Monsky ring.  

\medskip

\noindent{\bf Organization.} In Section~\ref{sec:prelim} we collect a number of elementary observations relating the cohomology of line bundles on the incidence correspondence, the cohomology of $D^d\mc{R}(e)$, and the Artinian algebras \eqref{eq:defA}. In Section~\ref{sec:pparts-P1} we provide the recursive description for the splitting type of the (duals of the) bundles of principal parts on $\PP^1$, and use this in Section~\ref{sec:coh-Fdr-recursive} to extract information about their cohomology. Section~\ref{sec:Han-Monsky} is concerned with the graded Han--Monsky ring, and the connection to the cohomology calculations on $\PP^1$. In Section~\ref{sec:recursive-cohomology} we consider the natural generalizations of the sheaves $\mc{F}^d_r$ to higher dimensional projective spaces, and using their filtrations by sheaves of the form $D^d\mc{R}(e)$ we establish Theorem~\ref{thm:coh-recursion}. Section~\ref{sec:char-2-Nim} focuses on the characteristic $2$ situation, proving Theorem~\ref{thm:h1coh-char2-nonrecursive}. Finally, we end with some applications to Lefschetz properties for Artinian monomial complete intersections in Section~\ref{sec:Lefschetz}.

\section{Preliminaries}\label{sec:prelim}

Let $V=\kk^n$, where $\kk$ is an algebraically closed field of characteritic $p>0$, and consider the projective space $\PP=\bb{P}(V)=\op{Proj}(\Sym V)$ with tautological short exact sequence
\begin{equation}\label{eq:ses-on-PV}
0 \lra \mc{R} \lra V \oo \mc{O}_{\PP} \lra \mc{O}_{\PP}(1) \lra 0,
\end{equation}
where $\mc{R}=\Omega(1)$ is the tautological rank $(n-1)$ subsheaf, $\Omega$ is the cotangent sheaf, and $\mc{O}_{\PP}(1)$ is the tautological quotient line bundle. For $d\geq 0$, $e\geq -1$, \eqref{eq:ses-on-PV} gives rise to a short exact sequence
\[0 \lra D^d\mc{R}(e) \lra D^d V\oo \mc{O}_{\PP}(e) \lra D^{d-1} V\oo \mc{O}_{\PP}(e+1) \lra 0.\]
By taking global sections, we get a map
\begin{equation}\label{eq:def-Delta-de}
D^d V \oo \Sym^e V \overset{\Delta_{d,e}}{\lra} D^{d-1}V \oo \Sym^{e+1}V
\end{equation}
induced by the co-multiplication $D^dV \to D^{d-1}V\oo V$ followed by the multiplication $V\oo\Sym^e V \to \Sym^{e+1}V$. The long exact sequence in cohomology yields
\begin{equation}\label{eq:coh-Ddr-as-ker-coker}
H^i(\PP,D^d\mc{R}(e))=\begin{cases}
    \ker(\Delta_{d,e}) & \text{if }i=0, \\
    \coker(\Delta_{d,e}) & \text{if }i=1, \\
    0 & \text{otherwise,}
\end{cases}
\end{equation}
and our main goal is to understand these cohomology groups. When $e\leq -2$ the cohomology is concentrated in $H^{n-1}$ or is identically zero, and it is computed from the short exact sequence
\[ 0 \lra H^{n-1}(\PP,D^d\mc{R}(e)) \lra D^d V\oo H^{n-1}(\PP,\mc{O}_{\PP}(e)) \lra D^{d-1} V\oo H^{n-1}(\PP,\mc{O}_{\PP}(e+1))\lra 0\]
Dualizing \eqref{eq:def-Delta-de} and using the fact that $(\Sym^m V)^{\vee} = D^m(V^{\vee})$, it follows that if we consider the dual projective space $\PP^{\vee} = \bb{P}(V^{\vee})$, then we have
\begin{equation}\label{eq:coh-P-vs-Pdual}
 H^i(\PP,D^d\mc{R}(e)) = H^{1-i}(\PP^{\vee},D^{e+1}\mc{R}(d-1))^{\vee}\quad\text{ for }i=0,1.
\end{equation}
This symmetry is even more apparent if we consider the \defi{incidence correspondence}
\[ X = \{(p,H) : p\in H\} \subset \PP \times \PP^{\vee},\]
which is a hypersurface cut out by the bilinear form
\begin{equation}\label{eq:def-omega}
 \omega = x_1y_1+\cdots+x_ny_n.
\end{equation}
The line bundles on $X$ arise by restriction from $\PP \times \PP^{\vee}$, and we write
\[ \mc{O}_X(a,b) = \mc{O}_{\PP \times \PP^{\vee}}(a,b)_{|_X}\quad\text{ for }(a,b)\in\bb{Z}^2.\]
The cohomology groups \eqref{eq:coh-Ddr-as-ker-coker} can then be identified (up to a twist by the determinant representation) with cohomology groups of line bundles on $X$ (see \cite{gao-raicu}*{(2.12)}, \cite{GRV}*{Section~5}):
\begin{equation}\label{eq:coh-divided=coh-incidence} H^i\left(\PP, \op{D}^d\mc{R}(e)\right) \simeq H^{i+n-2}\left(X,\mc{O}_X(e+1,-d-n+1)\right) \oo \bw^n V^{\vee}.
\end{equation}
Under this identification, the symmetry \eqref{eq:coh-P-vs-Pdual} becomes a manifestation of Serre duality on $X$ and of the symmetric role played by $\PP$ and $\PP^{\vee}$.

It will be useful to give a more concrete realization of the cohomology groups above, and to study them for all $d,e$ simultaneously. To that end we fix a basis $x_1,\cdots,x_n$ of $V$, and the dual basis $y_1,\cdots,y_n$ of $V^{\vee}$. We consider the polynomial rings
\[ S = \kk[x_1,\cdots,x_n]\quad\text{and}\quad R = S[y_1,\cdots,y_n]\simeq\kk[y_1,\cdots,y_n,x_1,\cdots,x_n],\]
which have a coordinate independent description as
\[ S = \Sym_{\kk}V = \bigoplus_{e\geq 0}\Sym^e V,\quad R = \Sym_{\kk}(V^{\vee}\oplus V)=\bigoplus_{d,e\geq 0} \Sym^d(V^{\vee}) \oo \Sym^e V.\]
We also consider the local cohomology module
\[ M = \bw^n V^{\vee} \oo H^n_{(y_1,\cdots,y_n)}(R),\]
and define a bigrading of $M$ via
\begin{equation}\label{eq:explicit-basis-Mde} M_{d,e} = D^d V \oo \Sym^e V = \bw^n V^{\vee} \oo \bigoplus_{\substack{e_i,d_j\geq 0 \\ d_1+\cdots+d_n=d,\\ e_1+\cdots+e_n=e}} \kk\cdot\frac{x_1^{e_1}\cdots x_n^{e_n}}{y_1^{1+d_1}\cdots y_n^{1+d_n}}.
\end{equation}
Here the one-dimensional factor $\bw^n V^{\vee}$ is included in order to keep track of the equivariant structure, and can be ignored in concrete calculations. If we write $\omega = x_1y_1+\cdots+x_ny_n$ then we can identify \eqref{eq:def-Delta-de} with the multiplication map
\begin{equation}\label{eq:Delta-de-is-mult-omega} M_{d,e} \overset{\cdot\omega}{\lra} M_{d-1,e+1}.
\end{equation}
The short exact sequence
\[ 0 \lra R \overset{\cdot\omega}{\lra} R \lra R/\omega \lra 0\]
induces a long-exact sequence in local cohomology, whose only non-zero terms are
\[0 \lra H^{n-1}_{(y_1,\cdots,y_n)}(R/\omega) \lra M \overset{\cdot\omega}{\lra} M \lra H^n_{(y_1,\cdots,y_n)}(R/\omega) \lra 0,\]
hence the cohomology calculation in \eqref{eq:coh-Ddr-as-ker-coker} can be further rephrased into understanding the bi-graded components of the local cohomology groups of $R/\omega$ with support in $(y_1,\cdots,y_n)$. Our goal will then be to understand multiplication by $\omega$ on $M$, and in fact in the case $n=2$ it will also be important to analyze multiplication by powers $\omega^r$.

To that end, we will employ the symmetry coming from the natural $(\GL(V)=)\GL_n$-action on $S$ via linear change of coordinates. We view $y_1,\cdots,y_n$ as dual variables to $x_1,\cdots,x_n$ which induces a $\GL_n$-action on $R$ making the quadratic form $\omega$ a $\GL_n$-invariant. In particular, the maps \eqref{eq:Delta-de-is-mult-omega} and their iterates are $\GL_n$-equivariant. Restricting to the maximal torus of diagonal matrices in $\GL_n$, we get a $\bb{Z}^n$-grading on $R,S,M$, where $\deg(x_i)=-\deg(y_i)=\vec{e}_i$ is the $i$-th standard unit vector. Moreover, since $\bw^n V^{\vee}$ is a $1$-dimensional space concentrated in multidegree $(-1,\cdots,-1)$, we get for $\ul{a}=(a_1,\cdots,a_n)\in\bb{Z}^n$ that
\[ M_{\ul{a}} \simeq \bigoplus_{d_i+e_i=a_i}\kk\cdot\frac{x_1^{e_1}\cdots x_n^{e_n}}{y_1^{1+d_1}\cdots y_n^{1+d_n}}. \]

Finally, the following relationship to monomial Artinian complete intersections will be important in our analysis. We write $T_i = x_iy_i$ and consider the polynomial subalgebra $\kk[T_1,\cdots,T_n]$ of $R$. It therefore acts on $M$, and since $\deg(T_i)=\vec{0}$, it preserves the $\bb{Z}^n$-graded components. Moreover, each $M_{\ul{a}}$ is a cyclic $\kk[T_1,\cdots,T_n]$-module, given by
\[M_{\ul{a}} = \kk[T_1,\cdots,T_n]\cdot\frac{1}{y_1^{1+a_1}\cdots y_n^{1+a_n}} \simeq \kk[T_1,\cdots,T_n]/\langle T_1^{1+a_1}\cdots T_n^{1+a_n}\rangle. \]
Multiplication by $\omega$ on $M_{\ul{a}}$ translates via this identification into multiplication by $T=T_1+\cdots+T_n$. If we view $M_{\ul{a}}$ as a standard graded Artinian algebra (where the variables $T_i$ have degree $\deg(T_i)=1$) then 
\begin{equation}\label{eq:multigraded-comps-coh}
    H^0(\PP,D^dR(e))_{\ul{a}} = (0 : T)_e\quad\text{and}\quad H^1(\PP,D^dR(e))_{\ul{a}} = \left(\frac{M_{\ul{a}}}{T\cdot M_{\ul{a}}}\right)_{e+1}.
\end{equation}
This provides a close relationship between cohomology, the Han--Monsky representation ring discussed in Section~\ref{sec:Han-Monsky}, and Lefschetz properties for monomial complete intersections considered in Section~\ref{sec:Lefschetz}. The connection between local cohomology and the work of Han and Monsky is also investigated in ongoing work of Hochster and Kenkel, while the study of the local cohomology groups of $R/\omega$ (over $\bb{Z}$) famously arises in the construction of local cohomology groups with infinitely many associated primes \cite{singh}*{Section~4}.

\section{Splitting of duals of principal parts}\label{sec:pparts-P1}

Let $U$ be a $\kk$-vector space of dimension $\dim(U)=2$. Consider the projective line $\PP^1=\bb{P}U$ with tautological short exact sequence
\begin{equation}\label{eq:ses-on-P1}
0 \lra \mc{R} \lra U \oo \mc{O}_{\PP^1} \lra \mc{O}_{\PP^1}(1) \lra 0,
\end{equation}
where $\mc{R}=\Omega^1_{\PP^1}(1) = \bw^2 U\oo \mc{O}_{\PP^1}(-1)$ is the universal subsheaf, and $\mc{O}_{\PP^1}(1)$ is the universal quotient sheaf. The natural action of $\GL(U)$ on $U$ induces an action on $\PP^1$ and a canonical linearization of $\mc{O}_{\PP^1}(1)$ such that \eqref{eq:ses-on-P1} is an exact sequence of $\GL(U)$-equivariant sheaves.

For each $d\geq 0$, the sequence \eqref{eq:ses-on-P1} induces a short exact sequence
\[ 0 \lra \mc{R}^{\oo d} \lra D^dU \oo \mc{O}_{\PP^1} \overset{\Delta_d}{\lra} D^{d-1}U\oo\mc{O}_{\PP^1}(1) \lra 0,
\]
and we define a family of $\GL(U)$-equivariant locally free sheaves $\mc{F}^d_r$ via
\begin{equation}\label{eq:def-seq-Fdr} 0 \lra \mc{F}^d_r \lra D^dU \oo \mc{O}_{\PP^1} \overset{\Delta}{\lra} D^{d-r}U \oo \mc{O}_{\PP^1}(r) \lra 0,
\end{equation}
where $\Delta = (\Delta_{d-r+1}(r-1)) \circ \cdots \circ (\Delta_{d-1}(1)) \circ \Delta_d$. We have that $\Delta=0$ if $d<r$, hence
\begin{equation}\label{eq:Fdr-for-d<r} 
 \mc{F}^d_r = D^dU \oo \mc{O}_{\PP^1}\quad\text{when }d<r,
\end{equation}
which is why we will usually assume that $d\geq r$. In this case we have by construction that $\mc{F}^d_r$ has a filtration with composition factors
\[ \mc{R}^{\oo d},\ \mc{R}^{\oo(d-1)}(1),\cdots,\ \mc{R}^{\oo(d-r+1)}(r-1)\]
and therefore
\begin{equation}\label{eq:rank-det-Fdr}
\op{rank}(\mc{F}^d_r) = r \quad\text{ and }\quad \det(\mc{F}^d_r) = \bw^r\mc{F}^d_r = \left(\bw^2 U\right)^{\oo\left(dr-{r\choose 2}\right)} \oo \mc{O}_{\PP^1}(-r(d-r+1)).
\end{equation}
Moreover, by snake's lemma we have natural short exact sequences
\begin{equation}\label{eq:ses-Fdrs}
    0 \lra \mc{F}^d_r \lra \mc{F}^d_s \lra \mc{F}^{d-r}_{s-r}(r) \lra 0\qquad\text{ for all }s\geq r.
\end{equation}

By Grothendieck's theorem, $\mc{F}^d_r$ splits as a direct sum of line bundles, and our goal is to understand the splitting type. We would like to take advantage of the $\GL(U)$-equivariant structure, but the splitting cannot be usually taken to be $\GL(U)$-equivariant. For the rest of the section we fix a basis of $U$ and make an identification $U=\kk^2$. We consider the $2$-dimensional torus $T=\bb{G}_m \times \bb{G}_m \subset\GL(U)$ acting coordinate-wise on~$U$. By \cite{kumar}, there exists a decomposition
\[\mc{F}^d_r = \mc{L}_1 \oplus \cdots \oplus \mc{L}_r,\]
where each $\mc{L}_i$ is a $T$-equivariant line bundle (note that although the statement in \cite{kumar} is phrased over $\bb{C}$, the proof only uses the fact that irreducible $T$-representations are $1$-dimensional, and taking $T$-invariants is an exact functor). 

\begin{remark}\label{rem:principal-parts}
    One can identify the dual $\left(\mc{F}^d_r\right)^{\vee}$ with the vector bundle of principal parts $\mc{P}^{r-1}(\mc{O}_{\PP^1}(d))$, which was analyzed in \cite{maak}. In characteristic zero, it follows from \cite{maak}*{Proposition~6.3} that for $d\geq r-1$
\[ \mc{F}^d_r \simeq \mc{O}_{\PP^1}(-(d-r+1))^{\oplus r}.\]
In fact, it is not hard to check that its $\GL(U)$-equivariant structure can be described via
\begin{equation}\label{eq:char-0-GLU-Fdr}
\mc{F}^d_r = (\det U)^d\oo(\Sym^{r-1}U)^{\vee}\oo\mc{O}_{\PP^1}(-(d-r+1)).
\end{equation}
As we will see, the situation in characteristic $p>0$ is much more subtle.
\end{remark}

Each $T$-equivariant line bundle on $\PP^1$ is of the form
\[L_{u,v}\oo\mc{O}_{\PP^1}(i),\text{ where }u,v,i\in\bb{Z}
\]
and $L_{u,v}=\kk\cdot\chi^{u,v}$ is the $1$-dimensional $T$-representation where the action is given by
\begin{equation}\label{eq:act-on-chiuv}
 (t_1,t_2) * \chi^{u,v} = t_1^{u}t_2^{v}\chi^{u,v}\text{ for }t_1,t_2\in\bb{G}_m.
\end{equation}
For $a\geq b\geq 0$ we consider the $T$-representation
\[ H_{a,b} = \bigoplus_{i=b}^a L_{a+b-i,i},\]
and observe that $H_{a,b}$ is isomorphic to the Schur functor $\bb{S}_{(a,b)}U$ viewed as a $T$-representation. We make the convention $H_{a,b}=0$ if $a<b$. For $p=\op{char}(\kk)$ and $q=p^e$, we consider the Frobenius action on $T$-representations, and more generally on $T$-equivariant sheaves on $\PP^1$, whose action on line bundles is given~by
\[ F^q\left(L_{u,v}\oo\mc{O}_{\PP^1}(i)\right) = L_{qu,qv}\oo\mc{O}_{\PP^1}(qi).\]
For a $T$-representation $W$ and $i\in\bb{Z}$, we will often write $W(i)$ instead of $W\oo\mc{O}_{\PP^1}(i)$. It follows from \eqref{eq:char-0-GLU-Fdr} that in characteristic $0$ we have for $d\geq r-1$
\begin{equation}\label{eq:char-0-Tequiv-Fdr}
 \mc{F}^d_r = H_{d,d-r+1}(-(d-r+1)).
\end{equation}
The main result of this section describes recursively the $T$-equivariant splitting of $\mc{F}^d_r$ in characteristic $p>0$ as follows.

\begin{theorem}\label{thm:recursive-Fdr}
    Suppose that $\op{char}(\kk)=p>0$, $q'\leq r\leq q$ where $q=p^e$, $q'=p^{e-1}$. We have the following $T$-equivariant isomorphisms:
    \begin{enumerate}
        \item If $d<r$ then 
        \[ \mc{F}^d_r = H_{d,0}\oo\mc{O}_{\PP^1}.\]
        \item If $d\geq q+r-1$ then
        \[ \mc{F}^d_r = L_{q,q} \oo \mc{F}^{d-q}_r(-q).\]
        \item If $q-1\leq d \leq q+r-1$ then
        \[ \mc{F}^d_r = H_{d,q}(-q) \bigoplus H_{q-1,d-r+1}(r-q).\]
        \item If $r\leq d\leq q-1$, and if we let $1\leq a<p$ such that $aq'\leq r<(a+1)q'$, then
        \[\mc{F}^d_r = F^{q'}(H_{a,0}) \oo \mc{F}^{d-aq'}_{r-aq'} \bigoplus F^{q'}(H_{a-1,0}) \oo 
        \mc{F}^{d-r+q'}_{(a+1)q'-r}(r-aq').\]
    \end{enumerate}
\end{theorem}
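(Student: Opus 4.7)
The plan is to prove the theorem by strong induction on $d$, with case (1) serving as the base. Case (1) is immediate from \eqref{eq:Fdr-for-d<r} together with the identification $D^dU \simeq H_{d,0}$ of $T$-representations (using the weight-$(a,b)$ basis $e_1^{(a)}e_2^{(b)}$ of $D^dU$). Throughout the inductive step, the central tool is the family of short exact sequences \eqref{eq:ses-Fdrs}, which relates bundles $\mc{F}^d_s$ for different parameters. Since every $T$-equivariant vector bundle on $\PP^1$ splits as a direct sum of $T$-equivariant line bundles (by \cite{kumar}), it suffices to identify the line-bundle summands.

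For case (2), where $d \geq q+r-1$, I would run a secondary induction on $r$, with base $r=1$ being trivial (both sides equal $L_{d,d}(-d)$). For the inductive step, applying \eqref{eq:ses-Fdrs} with $(r,s)$ replaced by $(r-1,r)$ gives
\[
0 \to \mc{F}^d_{r-1} \to \mc{F}^d_r \to L_{d-r+1,d-r+1}(2r-d-2) \to 0,
\]
and the parallel sequence for $L_{q,q}(-q)\oo \mc{F}^{d-q}_r$ has the same sub and quotient by the inductive hypotheses (on $d$ and on $r$). To conclude, one constructs an explicit comparison morphism via the $q$-th divided power map $D^q \mc{R} \hookrightarrow D^qU \oo \mc{O}_{\PP^1}$ (which on local generators $(x_2-tx_1)^{(q)}$ is nonzero even though the naive $q$-fold divided power product of $U$'s is killed by $q!$) composed with the multiplication $D^q U \oo D^{d-q}U \to D^dU$, and checks compatibility with $\Delta$ using the Leibniz rule for co-multiplication in the divided power algebra.

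For case (3), where $q-1 \leq d \leq q+r-1$, I would first settle the sub-case $r = q$ directly. On global sections, the defining map $\Delta \colon D^dU \to D^{d-q}U \oo \Sym^qU$ simplifies dramatically via Lucas' theorem: since $\binom{q}{i} \equiv 0 \pmod p$ unless $i \in \{0,q\}$, the map sends $e_1^{(a)}e_2^{(b)}$ to $e_1^{(a-q)}e_2^{(b)}\oo e_1^q + e_1^{(a)}e_2^{(b-q)}\oo e_2^q$, and the kernel (noting $d < 2q$ so $a,b \geq q$ cannot hold simultaneously) is exactly $H_{q-1, d-q+1}$. This produces a globally generated sub-bundle $H_{q-1, d-q+1} \oo \mc{O}_{\PP^1} \hookrightarrow \mc{F}^d_q$; a rank-and-degree count (using \eqref{eq:rank-det-Fdr}) identifies the quotient as $H_{d,q}(-q)$, and the extension splits because $\Ext^1$ between any pair of $T$-line-bundle summands reduces to $H^1$ of a line bundle of degree $q\geq 1$, which vanishes. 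The general case $r < q$ then follows by applying \eqref{eq:ses-Fdrs} with $s=q$ and invoking the inductive hypothesis on the quotient $\mc{F}^{d-r}_{q-r}(r)$.

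Case (4), where $r \leq d \leq q-1$, is the most intricate: the plan is to descend to the smaller Frobenius level $q'=p^{e-1}$ by applying \eqref{eq:ses-Fdrs} with $s = aq'$ (or a related value) to split $\mc{F}^d_r$ into two pieces, each of which, via the inductive hypothesis at exponent $e-1$, should match one of the claimed summands $F^{q'}(H_{a,0}) \oo \mc{F}^{d-aq'}_{r-aq'}$ and $F^{q'}(H_{a-1,0}) \oo \mc{F}^{d-r+q'}_{(a+1)q'-r}(r-aq')$. The main obstacle is precisely this case: coordinating two levels of Frobenius, keeping the $T$-equivariant structure consistent at both levels, and verifying that the two summands assemble without introducing nontrivial extensions --- requiring a careful $\Ext^1$-vanishing argument on $\PP^1$ in the numerical range prescribed by $r \leq d \leq q-1$. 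A secondary technical point is checking that each invocation of the inductive hypothesis lands on a strictly smaller pair $(d,r)$ in whatever well-ordering is chosen, so that the recursion terminates.
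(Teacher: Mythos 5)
Your case (1) is fine, and your case (2) plan is workable in outline (though the entire content there is the comparison morphism: you must check that multiplication by the divided power $(x_2-tx_1)^{(q)}$ actually lands in $\mc{F}^d_r$ and is an isomorphism, which is essentially equivalent to the paper's argument via the contraction $f\mapsto x_1^q\oo y_1^q(f)+x_2^q\oo y_2^q(f)$ and a rank--determinant count). The decisive gap is case (4), which you correctly identify as the crux but do not prove. The sequences \eqref{eq:ses-Fdrs} only relate $\mc{F}^d_r$ to some $\mc{F}^d_s$ and $\mc{F}^{d-r}_{s-r}(r)$; no choice of $s$ produces a subquotient of the form $F^{q'}(H_{a,0})\oo\mc{F}^{d-aq'}_{r-aq'}$, a tensor product of an $(a+1)$-dimensional Frobenius-twisted multiplicity space with a smaller bundle. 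The mechanism that is missing from your sketch is the action of divided-power differential operators: Lemma~\ref{lem:del-a1-a2-ops} shows that $\pd^{(a_1q',a_2q')}$ gives $\mc{O}_{\PP^1}$-linear maps $\mc{F}^{d'}_{r'}\to\mc{F}^{d'+a}_{r'+a}$, and Corollary~\ref{cor:Fq-Da-action-Fdr} (via Lucas' theorem) shows the assembled map $F^{q'}(H_{a,0})\oo\mc{F}^{d-aq'}_{r-aq'}\to\mc{F}^d_r$ is a locally split inclusion. Identifying the cokernel then needs the twisted Euler operator of Lemma~\ref{lem:twisted-Euler}, and killing the extension class needs the weight bounds of Lemma~\ref{lem:weights-Fdr} together with the $H^1$-vanishing of Corollary~\ref{cor:vanish-H1Fdr}. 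Without some substitute for this machinery, the induction does not close.

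Case (3) as written also has two genuine gaps. First, a rank-and-degree count does not identify the quotient of $\mc{F}^d_q$ by its trivial subbundle as $H_{d,q}(-q)$: on $\PP^1$ many splitting types share rank $d-q+1$ and degree $-q(d-q+1)$, and your subsequent $\Ext^1$-vanishing argument requires every summand of the quotient to have degree exactly $-q$. You would need an extra cohomological input (e.g.\ computing $H^0$ of suitable twists of $\mc{F}^d_q$) to pin this down; the paper instead realizes the quotient explicitly via the resolution $\mc{G}^{\bullet}$ and the nine-lemma diagram. Second, for $r<q$ the kernel in $0\to\mc{F}^d_r\to\mc{F}^d_q\to\mc{F}^{d-r}_{q-r}(r)\to 0$ is \emph{not} determined by the middle term and the quotient (already for a surjection $\mc{O}_{\PP^1}^{4}\onto\mc{O}_{\PP^1}(1)^{2}$ the kernel can be $\mc{O}_{\PP^1}(-1)^{2}$ or $\mc{O}_{\PP^1}\oplus\mc{O}_{\PP^1}(-2)$ depending on the map), so ``invoking the inductive hypothesis on the quotient'' does not conclude. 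You need to exhibit the specific extension presenting $\mc{F}^d_r$ and show it splits, which is what the paper's diagram chase accomplishes.
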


The proof of Theorem~\ref{thm:recursive-Fdr} will occupy the rest of the section. Cases (1)--(3) are somewhat straightforward, but case (4) seems to require a more delicate analysis, and the crucial ingredient is to understand the action of differential operators on the sheaves $\mc{F}^d_r$, and their associated graded modules $F^d_r$. Before going into the details of the proof, we point out the following reformulation of part (4).

\begin{corollary}\label{cor:rec-Fdr-sometimes}
 Suppose that $q'\leq r\leq q$ are as in Theorem~\ref{thm:recursive-Fdr}.
 \begin{enumerate}
  \item[(i)] If $aq'\leq r\leq d < (a+1)q'$ then
        \[\mc{F}^d_r = F^{q'}(H_{a,0}) \oo \mc{F}^{d-aq'}_{r-aq'} \bigoplus F^{q'}(H_{a-1,0}) \oo \left[H_{d-r+q',q'}(r-(a+1)q') \bigoplus H_{q'-1,d-aq'+1}\oo\mc{O}_{\PP^1}\right] \] 
  \item[(ii)] In particular, if $p=2$ and $q'\leq r\leq d<q$ then 
          \[\mc{F}^d_r = F^{q'}U \oo \mc{F}^{d-q'}_{r-q} \bigoplus H_{d-r+q',q'}(r-q) \bigoplus H_{q'-1,d-q'+1}\oo\mc{O}_{\PP^1} \] 
 \end{enumerate}
\end{corollary}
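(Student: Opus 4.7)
The plan is to derive (i) from two consecutive applications of Theorem~\ref{thm:recursive-Fdr}. Part (4) applies because $d < (a+1)q' \leq pq' = q$ forces $r \leq d \leq q-1$, while $q' \leq aq' \leq r < q$ meets the theorem's range. This gives
\[\mc{F}^d_r = F^{q'}(H_{a,0}) \oo \mc{F}^{d-aq'}_{r-aq'} \bigoplus F^{q'}(H_{a-1,0}) \oo \mc{F}^{d-r+q'}_{(a+1)q'-r}(r-aq'),\]
and only the second tensor factor requires further analysis. Writing $d^{\star}=d-r+q'$ and $r^{\star}=(a+1)q'-r$, our hypotheses yield $q' \leq d^{\star} < 2q'$, $1 \leq r^{\star} \leq q'$, and $d^{\star} \leq q'+r^{\star}-1$ (this last being equivalent to $d \leq (a+1)q'-1$), which is the range required for Theorem~\ref{thm:recursive-Fdr}(3) with the role of $q$ played by $q'$.

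Invoking that instance of (3) gives $\mc{F}^{d^{\star}}_{r^{\star}} = H_{d-r+q',\,q'}(-q') \bigoplus H_{q'-1,\,d-aq'+1}(aq'-r)$; tensoring by $\mc{O}_{\PP^1}(r-aq')$ shifts the two twists to $r-(a+1)q'$ and $0$ respectively, reproducing the bracketed expression in the statement of (i). Plugging this back into the output of (4) completes (i). For (ii), I would specialize to $p=2$: the constraint $1 \leq a < p$ forces $a=1$, so $aq'=q'$ and $(a+1)q'=2q'=q$; as a $T$-representation $H_{1,0} = L_{1,0} \oplus L_{0,1}$ is isomorphic to $U$, and $H_{0,0}$ is the trivial representation, so the $F^{q'}(H_{a-1,0})$ factor collapses and the formula in (ii) follows.

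The main obstacle I anticipate is a hypothesis-bookkeeping subtlety in the second step. Theorem~\ref{thm:recursive-Fdr} is packaged so that $q$ and $q'$ are adjacent powers of $p$ with $q' \leq r \leq q$, whereas when applying it to $\mc{F}^{d^{\star}}_{r^{\star}}$ with the ``new $q$'' taken to be the old $q'$, the ``new $q'$'' becomes $p^{e-2}$, which can strictly exceed $r^{\star}$. What one actually needs is that the identity in part (3) holds for any power $q$ of $p$ with $q-1 \leq d \leq q+r-1$, with no further lower bound on $r$; this should be intrinsic to the argument establishing case (3) in the proof of the main theorem, and it is easy to verify in small test cases (e.g.\ $\mc{F}^9_1$ in characteristic~$3$, where iterating case~(2) with $q=3$ produces the same answer as a direct application of~(3) with $q=9$).
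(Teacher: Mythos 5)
Your argument is exactly the paper's proof: apply Theorem~\ref{thm:recursive-Fdr}(4), then apply Theorem~\ref{thm:recursive-Fdr}(3) with $q$ replaced by $q'$ to the factor $\mc{F}^{d-r+q'}_{(a+1)q'-r}$, whose parameters $d'=d-r+q'$, $r'=(a+1)q'-r$ satisfy $q'-1\leq d'\leq q'+r'-1$. The range subtlety you flag (that $r'$ may fall below $p^{e-2}$) is present in the paper's own application as well and is harmless for the reason you give: the proof of case (3) never uses the lower bound $r\geq q'$.
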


\begin{proof}
 Part (ii) is the special case of (i) when $p=2$ and $a=1$, so it suffices to consider case (i). Using Theorem~\ref{thm:recursive-Fdr}(4), the conclusion reduces to showing that
 \[ \mc{F}^{d-r+q'}_{(a+1)q'-r} = H_{d-r+q',q'}(-q') \bigoplus H_{q'-1,d-aq'+1}(aq'-r).\]
 If we set $d'=d-r+q'$, $r'=(a+1)q'-r$, then the hypothesis $aq'\leq r\leq d\leq (a+1)q'-1$ implies $q'-1\leq d'\leq q'+r'-1$, so we can apply Theorem~\ref{thm:recursive-Fdr}(3) to get the desired description of $\mc{F}^{d-r+q'}_{(a+1)q'-r} =\mc{F}^{d'}_{r'}$.
\end{proof}

We next introduce some notation to be used in the rest of the section. We let $S=\kk[x_1,x_2] = \Sym_{\kk}U$, write $S(r)$ for the rank one graded module generated in degree $-r$, and define the graded $S$-module $F^d_r$ via
\[ F^d_r = \ker\left(D^d U \oo S \lra D^{d-r}U\oo S(r)\right),\]
induced by the comultiplication $D^d U \lra D^{d-r}U \oo D^r U$, followed by the natural map $D^r U \lra \Sym^r U$ and the multiplication $\Sym^r U \oo S \lra S(r)$. Notice that $\mc{F}^d_r$ is the sheaf associated to the graded module $F^d_r$. Using the notation in Section~\ref{sec:prelim} for $n=2$ and $V=U$, we get that
\[ D^d U \oo S = M_{d,\bullet} = \bigoplus_{e\geq 0}M_{d,e}.\]
Moreover, for each $e\geq -r$ we get an exact sequence
\[ 0 \lra H^0\left(\PP^1,\mc{F}^d_r(e)\right) \lra M_{d,e} \overset{\cdot\omega^r}{\lra} M_{d-r,e+r} \lra H^1\left(\PP^1,\mc{F}^d_r(e)\right) \lra 0.\]
Using the explicit description \eqref{eq:explicit-basis-Mde}, we get that multiplication by $y_i$ induces an $S$-linear map $M_{d,\bullet}\lra M_{d-1,\bullet}$ which commutes with multiplication by $\omega$, hence we get induced maps
\begin{equation}\label{eq:mult-yi-Fdr}
 y_i : F^d_r \lra F^{d-1}_r\quad\text{ and }\quad y_i : \mc{F}^d_r \lra \mc{F}^{d-1}_r.    
\end{equation}

\subsection{Local charts for the sheaves $\mc{F}^d_r$.} It will be useful to have a local description of the sheaves $\mc{F}^d_r$, and because of symmetry we will only focus on the basic open affine $D_+(x_1)$, leaving the analysis in the chart $D_+(x_2)$ to the interested reader (see also \cite{maak}). We write $t=x_2/x_1$, $A=\kk[t]$ so that $D_+(x_1)=\Spec(A)$, and consider the identifications
\begin{equation}\label{eq:A-basis-Ddu}D^dU\oo\mc{O}_{\PP^1}(D_+(x_1)) = \op{Span}_A\left\{\frac{1}{y_1^{1+d_1}y_2^{1+d_2}}:d_1+d_2=d\right\},
\end{equation}
\[
D^{d-r}U\oo\mc{O}_{\PP^1}(r)(D_+(x_1)) = \op{Span}_A\left\{\frac{x_1^r}{y_1^{1+d_1}y_2^{1+d_2}}:d_1+d_2=d-r\right\}.\]
Throughout this section we will drop the factor $\bw^2 U^{\vee}$ in \eqref{eq:explicit-basis-Mde} from the notation, so the natural $T$-action on $M$ will give a $T$-equivariant identification
\begin{equation}\label{eq:forget-bw2Udual}
 \kk\cdot \frac{1}{y_1^{1+d_1}y_2^{1+d_2}} = L_{d_1,d_2}.
\end{equation}

\begin{lemma}\label{lem:local-basis-Fdr}
    If $d\geq r$ then the following sections give an $A$-basis of $\mc{F}^d_r(D_+(x_1))$:
    \[ f_i = \sum_{j=i}^{d} {-r \choose j-i}\cdot t^{j-i} \cdot\frac{1}{y_1^{j+1}y_2^{d+1-j}},\quad\text{ for }i=0,\cdots,r-1.\]
\end{lemma}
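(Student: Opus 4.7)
The plan is to work entirely in the affine chart $D_+(x_1)\simeq\Spec A$, where $A=\kk[t]$ and $t=x_2/x_1$. Via the local cohomology realization from Section~\ref{sec:prelim} (applied with $n=2$), the map $\Delta$ of \eqref{eq:def-seq-Fdr}, after dividing out the local generator $x_1^r$ of $\mc{O}_{\PP^1}(r)$, corresponds to multiplication by $\omega^r=x_1^r(y_1+ty_2)^r$ on $M_{d,\bullet}$. Writing $e_j=\frac{1}{y_1^{j+1}y_2^{d+1-j}}$ for the local basis of $D^d U\otimes\mc{O}(D_+(x_1))$, the binomial theorem gives
\[ \omega^r\cdot e_j \;=\; \sum_{k}\binom{r}{k}\,x_1^k x_2^{r-k}\cdot\frac{1}{y_1^{j-k+1}y_2^{d-j-r+k+1}}, \]
with the convention that terms having a non-positive $y$-exponent are zero.

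First I would check that $\Delta(f_i)=0$ by direct substitution. Plugging the formula for $f_i$ into the above, re-indexing by the target basis vector $e'_\ell=\frac{1}{y_1^{\ell+1}y_2^{d-r-\ell+1}}$ (via $\ell=j-k$), and pulling out the factor $x_1^r$, the coefficient of $e'_\ell$ becomes
\[ t^{r+\ell-i}\sum_{j}\binom{-r}{j-i}\binom{r}{j-\ell}. \]
After the substitution $u=j-i$ and using $\binom{r}{m}=\binom{r}{r-m}$, the inner sum takes the Chu--Vandermonde form $\sum_u\binom{-r}{u}\binom{r}{(r-i+\ell)-u}=\binom{0}{r-i+\ell}$. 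Since $0\leq i\leq r-1$ and $\ell\geq 0$, the index $r-i+\ell$ is strictly positive, so the sum vanishes. Hence each $f_i$ is a local section of $\mc{F}^d_r$.

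The rest of the argument is linear-algebraic and hinges on the triangular structure of the matrix expressing $f_0,\ldots,f_{r-1}$ in the basis $\{e_j\}_{j=0}^d$: the coefficient of $e_j$ in $f_i$ is $\binom{-r}{j-i}t^{j-i}$ for $j\geq i$ and $0$ otherwise, so the top-left $r\times r$ block is upper triangular with $1$'s on the diagonal. This immediately gives $A$-linear independence. For generation, I would use that $\mc{F}^d_r$ is locally free of rank $r$ on $D_+(x_1)\simeq\bb{A}^1$ (the surjectivity of $\Delta$ needed here reduces, on fibers, to injectivity of multiplication by a power of a nonzero linear form $\Sym^{d-r}U^{\vee}\to\Sym^d U^{\vee}$), so $\mc{F}^d_r(D_+(x_1))$ is a free $A$-module of rank $r$. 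By Nakayama it then suffices to verify that the reductions $f_i(t_0)$ span the fiber of $\mc{F}^d_r$ at each closed point $t_0\in\kk$, which is immediate from the same triangularity argument with $t_0$ in place of $t$.

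The only substantive step is the Chu--Vandermonde identification in the first part; the rest is routine verification.
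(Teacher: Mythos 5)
Your proposal is correct and follows essentially the same route as the paper: the key step in both is the vanishing $\sum_u\binom{-r}{u}\binom{r}{w-u}=0$ for $w>0$ (your Chu--Vandermonde identity is exactly the paper's identity derived from $(1+z)^{-r}(1+z)^r=1$), applied to show $\omega^r$ annihilates each $f_i$, together with the unitriangularity of the top $r\times r$ block of the coefficient matrix. The only cosmetic difference is in wrapping up the basis claim: the paper observes that the $f_i$ span a determinant-one (hence direct-summand) rank-$r$ submodule of $D^dU\otimes A$ sitting inside the rank-$r$ subbundle $\mc{F}^d_r$, whereas you invoke freeness of $\mc{F}^d_r(D_+(x_1))$ plus Nakayama and fiberwise spanning; both are routine and valid.
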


\begin{proof} Relative to the basis of $D^dU\oo A$ from \eqref{eq:A-basis-Ddu}, the $f_i$'s are described by the columns of the matrix
\[
\begin{bmatrix}
   1 & 0 & 0 & \cdots & 0 \\
   {-r\choose 1}\cdot t & 1 & 0 & \cdots & 0 \\
   {-r\choose 2}\cdot t^2 & {-r\choose 1}\cdot t & 1 & \cdots & 0 \\
   \vdots & \vdots & \vdots & \ddots & \vdots \\
   \vdots & \vdots & \vdots & \ddots & 1 \\
   \vdots & \vdots & \vdots & \ddots & \vdots \\
  {-r\choose d}\cdot t^d & {-r\choose d-1}\cdot t^{d-1} & {-r\choose d-2}\cdot t^{d-2} & \cdots & {-r\choose d-r+1}\cdot t^{d-r+1} \\
\end{bmatrix}
\]
Since the submatrix formed by the first $r$ rows has determinant $1$ determines a direct summand of $D^dU\oo A$ of rank $r$. To show that this agrees with $\mc{F}^d_r(D_+(x_1))$, it then suffices to check that each $f_i$ is annihilated by $\omega^r = (y_1+ty_2)^r\cdot x_1^r$.

To that end, we consider the identity
\[ 1 = (1+z)^{-r}\cdot (1+z)^r = \left(\sum_{u\geq 0}{-r\choose u}z^u\right) \cdot \left(\sum_{v\geq 0}{r\choose v}z^v\right),\]
which implies that for $w>0$ we have
\begin{equation}\label{eq:identity-bin*bin} \sum_{u+v=w}{-r\choose u}\cdot {r\choose v} = 0.
\end{equation}
It follows that
\[ f_i\cdot(y_1+ty_2)^r = \sum_{j=i}^d\sum_{k=0}^r {-r\choose j-i}\cdot {r\choose r-k} t^{r-k+j-i}\cdot\frac{1}{y_1^{1+(j-k)}y_2^{1+(d-r-(j-k))}}.\]
Noting that the terms where $j<k$ vanish, it follows that for the remaining terms we have $w=r-k+j-i\geq r-i>0$. Setting $u=j-i$, $v=r-k$, rearranging the terms and using \eqref{eq:identity-bin*bin} gives the desired vanishing.
\end{proof}

\subsection{The proof of cases (1)--(3) of Theorem~\ref{thm:recursive-Fdr}.} We can now apply our preliminary results to establish the recursive formulas of Theorem~\ref{thm:recursive-Fdr} in all but the last case.

{\bf Case (1)} follows by combining \eqref{eq:Fdr-for-d<r} with the $T$-isomorphism $D^dU \simeq H_{d,0}$. 

{\bf Case (2): $d\geq q+r-1$.} We will construct a $T$-equivariant complex
\begin{equation}\label{eq:cx-Fdr-into-FqFd-qr} 
0 \lra \mc{F}^d_r \overset{\iota}{\lra} F^qU \oo \mc{F}^{d-q}_r \overset{\pi}{\lra} \mc{F}^{d-q}_r(q) \lra 0
\end{equation}
where $\iota$ is injective, and $\pi$ is induced by the natural surjection $F^q U \oo \mc{O}_{\PP^1}\onto\mc{O}_{\PP^1}(q)$ and hence it is surjective. Since
\[ \op{rank}\left(F^qU \oo \mc{F}^{d-q}_r\right) = 2r = \op{rank}\left(\mc{F}^d_r\right) + \op{rank}\left(\mc{F}^{d-q}_r(q)\right),\]
the middle homology of \eqref{eq:cx-Fdr-into-FqFd-qr} must be $0$-dimensional. It follows from \eqref{eq:rank-det-Fdr} that the determinant of the complex is trivial, hence the complex is in fact exact. Conclusion (2) is then a consequence of the identification
\[ \ker\left(F^q U \oo \mc{O}_{\PP^1}\onto\mc{O}_{\PP^1}(q)\right) = \mc{R}^{\oo q} = L_{q,q}(-q).\]

To construct \eqref{eq:cx-Fdr-into-FqFd-qr} we use the basis $\{x_1^q,x_2^q\}$ of $F^qU$, as well as the $q$-iterates of the maps in \eqref{eq:mult-yi-Fdr}, in order to define $\iota$ via
\[ \iota(f) = x_1^q \oo y_1^q(f) + x_2^q \oo y_2^q(f)\quad\text{ for }f\text{ a local section of }\mc{F}^d_r.\]
Using the identity
\[ x_1^qy_1^q+x_2^qy_2^q = \omega^q,\]
together with the hypothesis $r\leq q$, it follows that
\[ \pi(\iota(f)) = \omega^q\cdot f = \omega^{q-r}\cdot \omega^r \cdot f = 0,\]
hence $\iota$ and $\pi$ define indeed a complex. To prove that $\iota$ is injective, we restrict to the affine chart $D_+(x_1)$ and use Lemma~\ref{lem:local-basis-Fdr}: it is enough to check the $A$-linear independence of the sections
\[ y_2^q(f_i) = \sum_{j=i}^{d-q} {-r \choose j-i}\cdot t^{j-i} \cdot\frac{1}{y_1^{j+1}y_2^{d+1-j-q}} = \frac{1}{y_1^{i+1}y_2^{d+1-i-q}} + \text{ lower order terms}.\]
The linear independence then follows from the fact that the displayed leading terms are distinct and non-zero, which uses $d-q\geq r-1\geq i$ for $i=0,\cdots,r-1$.

{\bf Case (3): $q-1\leq d\leq q+r-1$.} If $d\leq r-1$, then since $q-1\leq d$ and $r\leq q$, we must have $r=q$ and $d=q-1$, which by our convention makes $H_{d,q}=0$. The conclusion of (3) then becomes $\mc{F}^d_r = H_{q-1,0}\oo\mc{O}_{\PP^1}$ which is a special case of (1). We may therefore assume that $d\geq r$.

By an easy extension of the argument in \cite{rai-vdb}*{Section~4.5}, any short exact sequence of locally free sheaves
\[ 0 \lra \mc{F} \lra \mc{E} \lra \mc{L} \lra 0,\]
where $\mc{L}$ is a line bundle, gives rise for $m\geq 0$ to a right resolution $\mc{G}^{\bullet}$ of $T_qD^m\mc{F}$ given by
\[\mc{G}^{2i} = \mc{L}^{qi}\oo T_qD^{m-qi}\mc{E},\qquad \mc{G}^{2i+1} = \mc{L}^{q(i+1)-r}\oo T_qD^{m-q(i+1)+r}\mc{E}.\]
We apply this to the tautological sequence \eqref{eq:ses-on-P1}, so that $\mc{F}=\mc{R}$ is also a line bundle, $\mc{E}=U\oo\mc{O}_{\PP^1}$, and $\mc{L}=\mc{O}_{\PP^1}(1)$. It follows that if we take $m=d+q-r$, then $m\geq q$ and therefore $T_qD^m\mc{F}=0$. The complex $\mc{G}^{\bullet}$ reduces then to the exact sequence
\[0 \lra T_qD^{d+q-r}U \oo \mc{O}_{\PP^1} \lra T_qD^dU(q-r) \lra T_qD^{d-r}U(q) \lra T_q D^{d-q}U(2q-r) \lra 0.\]
Using the exactness of $\mc{G}^{\bullet}$ and \eqref{eq:def-seq-Fdr}, we get that
\[ \mc{F}^{d-r}_{q-r}(q) = \ker\left(\mc{G}^2\lra\mc{G}^3\right) = \coker\left(\mc{G}^0\lra\mc{G}^1\right).\]
Twisting by $\mc{O}_{\PP^1}(r-q)$, we can then form the following diagram with exact rows
\[
\xymatrix{
  & 0 \ar[d] & 0 \ar[d] & 0 \ar[d] & \\
0 \ar[r] & T_qD^{d+q-r}U(r-q) \ar[r] \ar[d] & T_qD^{d}U \oo \mc{O}_{\PP^1} \ar[r] \ar[d] & \mc{F}^{d-r}_{q-r}(r) \ar[r] \ar[d] & 0\\
0 \ar[r] & \mc{F}^d_r \ar[r] \ar[d] & D^{d}U \oo \mc{O}_{\PP^1} \ar[r] \ar[d] & D^{d-r}U(r) \ar[r] \ar[d] & 0\\
0 \ar[r] & D^{d-q}U \oo \mc{R}^{\oo q} \ar[r] \ar[d] & D^{d-q}U \oo F^qU\oo  \mc{O}_{\PP^1} \ar[r] \ar[d] & D^{d-q}U(q) \ar[r] \ar[d] & 0\\
  & 0  & 0  & 0  & \\
}
\]
The middle column is exact since $d<2q$ (see \cite{gao-raicu}*{(2.7)}), while the third column is exact by \eqref{eq:def-seq-Fdr}. It follows that the first column is exact as well, and moreover it is a split exact sequence, because
\[ \Ext\left(\mc{O}_{\PP^1}(-q),\mc{O}_{\PP^1}(r-q)\right) = H^1(\PP^1,\mc{O}_{\PP^1}(r))=0.\]
Since $\mc{R}^{\oo q}=L_{q,q}(-q)$, conclusion (3) of our theorem is then a consequence of the $T$-isomorphisms
\[ D^{d-q}U\oo L_{q,q} \simeq H_{d,q}\quad\text{and}\quad T_qD^{d+q-r}U = \left(\bw^2 U\right)^{\oo(d-r+1)}\oo \Sym^{q-2+r-d}U\simeq H_{q-1,d-r+1},\]
where the second identification follows from \cite{gao-raicu}*{(2.6)} or \cite{sun}*{Proposition~3.5}. \qed 

Case (3) of Theorem~\ref{thm:recursive-Fdr} implies the following vanishing that will be used in the proof of case (4) later on.

\begin{corollary}\label{cor:vanish-H1Fdr}
 If $r\leq d<q$ then $H^1\left(\PP^1,\mc{F}^d_r(q-r)\right)=0$.
\end{corollary}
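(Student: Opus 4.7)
The plan is to reduce the desired cohomology vanishing to a surjectivity statement on the multigraded module $M$ of Section~\ref{sec:prelim}, and then appeal to the Strong Lefschetz Property for two-variable monomial complete intersections. I first twist the defining short exact sequence~\eqref{eq:def-seq-Fdr} of $\mc{F}^d_r$ by $\mc{O}_{\PP^1}(q-r)$: since $r \leq d < q$, we have $q-r \geq 1$ and hence $H^1(\PP^1, \mc{O}_{\PP^1}(q-r)) = 0$. The resulting long exact sequence in cohomology identifies
\[H^1\bigl(\PP^1, \mc{F}^d_r(q-r)\bigr) \simeq \coker\bigl(D^d U \oo \Sym^{q-r} U \lra D^{d-r} U \oo \Sym^q U\bigr),\]
the map being the $r$-fold iterate of~\eqref{eq:def-Delta-de}. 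Via the framework of Section~\ref{sec:prelim} with $n=2$ and $V=U$, this is precisely the multiplication $M_{d,q-r}\overset{\cdot\omega^r}{\lra}M_{d-r,q}$, so the task reduces to showing this map is surjective.

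I would then decompose by torus weight. For each $\ul{a} = (a_1, a_2)$ with $a_1+a_2 = d+q-r$, the weight-$\ul{a}$ component of the map takes the form
\[(A_{\ul{a}})_{q-r} \xrightarrow{\cdot T^r} (A_{\ul{a}})_q,\]
where $A_{\ul{a}} := \kk[T_1,T_2]/(T_1^{1+a_1},T_2^{1+a_2})$ and $T := T_1+T_2$. Since $A_{\ul{a}}$ is a monomial complete intersection in two variables, it satisfies the Strong Lefschetz Property in arbitrary characteristic (a classical result, see for instance \cite{lund-nick}*{Theorem~3.8}), so the map $\cdot T^r$ has maximal rank. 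To upgrade maximal rank to surjectivity it remains to check $\dim(A_{\ul{a}})_{q-r} \geq \dim(A_{\ul{a}})_q$: Gorenstein symmetry of the Hilbert function of $A_{\ul{a}}$ about its socle midpoint $s/2 = (a_1+a_2)/2 = (d+q-r)/2$ gives $\dim(A_{\ul{a}})_{q-r} = \dim(A_{\ul{a}})_d$, and a direct comparison of the distances $|d-s/2|$ and $|q-s/2|$, using only $d < q$, shows that $d$ is at least as close to the midpoint as $q$ is. Unimodality of the Hilbert function then yields $\dim(A_{\ul{a}})_d \geq \dim(A_{\ul{a}})_q$, completing the argument.

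The main obstacle is the appeal to the Strong Lefschetz Property, which is a non-trivial (though classical) input. A more self-contained alternative would be to argue by induction directly from Theorem~\ref{thm:recursive-Fdr}, showing that every $T$-equivariant line-bundle summand of $\mc{F}^d_r$ has degree at least $r-q-1$; however this route requires a careful case analysis within case~(4), distinguishing whether $d < (a+1)q'$ or $d \geq (a+1)q'$, and is less transparent than the reduction to a Lefschetz statement.
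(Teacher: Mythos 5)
Your opening reduction is correct and coincides with the first step of the paper's proof: twisting \eqref{eq:def-seq-Fdr} by $q-r\geq 1$ identifies $H^1(\PP^1,\mc{F}^d_r(q-r))$ with $\coker\bigl(D^dU\oo\Sym^{q-r}U\to D^{d-r}U\oo\Sym^qU\bigr)$, and decomposing by torus weight correctly rephrases this as surjectivity of $(A_{\ul{a}})_{q-r}\xrightarrow{\cdot T^r}(A_{\ul{a}})_q$ for all $\ul{a}$ with $a_1+a_2=d+q-r$. The dimension comparison via symmetry and unimodality of the Hilbert function is also fine. The gap is the key input: monomial complete intersections in two variables do \emph{not} satisfy the Strong Lefschetz Property in arbitrary positive characteristic. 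For instance, in $A=\kk[T_1,T_2]/(T_1^p,T_2^p)$ one has $\ell^p=0$ for every linear form $\ell$, while $A_0$ and $A_p$ are both nonzero, so $\times\ell^p:A_0\to A_p$ fails to have maximal rank. The results you cite (\cite{lund-nick}, \cite{nick}) are \emph{classifications} of when SLP holds, precisely because it often fails; they do not assert it always holds. Since deciding exactly when $\times T^r$ has maximal rank on these algebras is essentially the content of the paper's Han--Monsky analysis (compare Theorem~\ref{thm:pparts-vs-Monsky}, which translates such maximal-rank questions into the quotients $J(d-1,r-1)/J(d,r)$), your argument has reformulated the statement rather than proved it: the needed surjectivity in these particular degrees is true, but it requires proof.

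For comparison, the paper completes the argument differently after the same first step: it dualizes the cokernel, using $(\Sym^iU)^\vee=D^i(U^\vee)$, to identify $H^1(\PP^1,\mc{F}^d_r(q-r))^\vee$ with $H^0(\PP^1,\mc{F}^q_r(d-r))$ --- note the swap of the roles of $d$ and $q$ --- and then applies the already-established case (3) of Theorem~\ref{thm:recursive-Fdr} to $\mc{F}^q_r$ to see that every summand of $\mc{F}^q_r(d-r)$ has negative twist, whence $H^0=0$. Be aware also that your proposed ``self-contained alternative'' (bounding the degrees of all summands of $\mc{F}^d_r$ via the full recursion) would be circular: this corollary is an ingredient in the proof of case (4) of Theorem~\ref{thm:recursive-Fdr}, so only cases (1)--(3) are available at this point.
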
 

\begin{proof}
 Using \eqref{eq:def-seq-Fdr} and the long exact sequence in cohomology, we get
 \begin{equation}\label{eq:H1Fdr=coker}
 H^1\left(\PP^1,\mc{F}^d_r(q-r)\right) = \op{coker}\left(D^d U \oo \Sym^{q-r}U \overset{\Delta}{\lra} D^{d-r}U \oo \Sym^q U\right).
 \end{equation}
To prove the desired vanishing, it suffices to show that the dual of the map in \eqref{eq:H1Fdr=coker} is injective. Using $(\Sym^i U)^{\vee} = D^i(U^{\vee})$, $(D^i U)^{\vee} = \Sym^i(U^{\vee})$, we can identify $U^{\vee} \simeq \kk^2 \simeq U$ and write the dual of \eqref{eq:H1Fdr=coker} as
\[H^1\left(\PP^1,\mc{F}^d_r(q-r)\right)^{\vee} \simeq \op{ker}\left(D^q U \oo \Sym^{d-r}U \overset{\Delta}{\lra} D^{q-r}U \oo \Sym^d U\right) = H^0\left(\PP^1,\mc{F}^q_{r}(d-r)\right).\]
Using Theorem~\ref{thm:recursive-Fdr}(3) we get
\[ \mc{F}^q_{r}(d-r) = H_{q,q}(d-q-r) \bigoplus H_{q-1,q-r+1}(d-q).\]
Our hypothesis implies $d-q-r\leq d-q < 0$, hence $H^0\left(\PP^1,\mc{F}^q_{r}(d-r)\right)=0$, concluding our proof.
\end{proof}

\subsection{Action of differential operators and consequences.} In this section we explore some consequences of the fact that the local cohomology modules $M$ in Section~\ref{sec:prelim} have a natural structure of $\mc{D}_R$-modules, where $\mc{D}_R$ is the ring of differential operators on $R$. Specifically we are interested in the action of differential operators
\[ \pd^{(a_1,a_2)} = \pd_1^{(a_1)}\pd_2^{(a_2)},\quad\text{ where }\pd_i^{(a)} = \frac{1}{a!}\cdot\frac{\pd^a}{\pd y_i^a},\]
which is $S$-linear. We will disregard differentiation with respect to the $x$ variables, so the notation $\pd_i$ should not be ambiguous. The following identity will be used repeatedly in what follows:
\begin{equation}\label{eq:pda1a2-act-ymon}
\pd^{(a_1,a_2)}\left(\frac{1}{y_1^{1+d_1}y_2^{1+d_2}} \right) = (-1)^{a_1+a_2}\cdot{d_1+a_1 \choose a_1}\cdot{d_2+a_2 \choose a_2}\cdot\frac{1}{y_1^{1+d_1+a_1}y_2^{1+d_2+a_2}}
\end{equation}

\begin{lemma}\label{lem:del-a1-a2-ops}
    If $a=a_1+a_2$, then the differential operators $\pd^{(a_1,a_2)}$ induce $S$- (respectively $\mc{O}_{\PP^1}$-) linear maps
    \[\pd^{(a_1,a_2)}:F^d_r \lra F^{d+a}_{r+a},\qquad \pd^{(a_1,a_2)}:\mc{F}^d_r \lra \mc{F}^{d+a}_{r+a}.\]
\end{lemma}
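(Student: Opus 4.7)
The plan is to reduce the lemma to a Leibniz-rule calculation. First, $S$-linearity is automatic: since $\pd_i=\pd/\pd y_i$ differentiates only in the $y$-variables while $S=\kk[x_1,x_2]$ involves only the $x$-variables, we have $\pd_i(x_j\cdot f)=x_j\cdot\pd_i(f)$ for all $i,j$. The formula \eqref{eq:pda1a2-act-ymon} shows that $\pd^{(a_1,a_2)}$ sends $M_{d,\bullet}$ to $M_{d+a,\bullet}$. Recalling that $F^d_r$ is the kernel of the $S$-linear map $M_{d,\bullet}\overset{\cdot\omega^r}{\lra} M_{d-r,\bullet+r}$, the only point to check is that $\pd^{(a_1,a_2)}(F^d_r)\subseteq F^{d+a}_{r+a}$; in other words, whenever $\omega^r\cdot f=0$ we must show $\omega^{r+a}\cdot\pd^{(a_1,a_2)}(f)=0$. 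Passing to associated sheaves then upgrades these to $\mc{O}_{\PP^1}$-linear maps $\mc{F}^d_r\to\mc{F}^{d+a}_{r+a}$.

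To establish the key claim, I would argue by induction on $a=a_1+a_2$, the case $a=0$ being vacuous. The two ingredients are the direct computation
\[ \pd_i^{(b)}(\omega^r) = {r\choose b}\cdot x_i^b\cdot\omega^{r-b}, \]
and the Leibniz rule for divided-power derivations. Applying $\pd^{(a_1,a_2)}$ to the relation $\omega^r\cdot f = 0$ produces an identity
\[ 0 = \omega^r\cdot\pd^{(a_1,a_2)}(f) + \sum_{(b_1,b_2)\neq(0,0)} c_{b_1,b_2}\cdot x_1^{b_1}x_2^{b_2}\cdot\omega^{r-b_1-b_2}\cdot\pd^{(a_1-b_1,a_2-b_2)}(f), \]
where the $c_{b_1,b_2}$ are certain multinomial coefficients coming from the two expansions. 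Multiplying through by $\omega^a$ yields
\[ \omega^{r+a}\cdot\pd^{(a_1,a_2)}(f) = -\sum_{(b_1,b_2)\neq(0,0)} c_{b_1,b_2}\cdot x_1^{b_1}x_2^{b_2}\cdot\omega^{r+(a-b_1-b_2)}\cdot\pd^{(a_1-b_1,a_2-b_2)}(f). \]
Since $(a_1-b_1)+(a_2-b_2) = a-b_1-b_2 < a$ for every $(b_1,b_2)\neq(0,0)$, the inductive hypothesis applies to each of the operators $\pd^{(a_1-b_1,a_2-b_2)}$ and forces every summand on the right-hand side to vanish; hence $\omega^{r+a}\cdot\pd^{(a_1,a_2)}(f)=0$.

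I do not foresee a genuine obstacle. The key observation is that the coefficient of the principal $(b_1,b_2)=(0,0)$ term in the Leibniz expansion is $1$, so any characteristic-$p$ vanishing of the other multinomial coefficients is harmless: those terms are killed by the inductive hypothesis irrespective of their prefactors. The only mildly delicate bookkeeping is verifying that the Leibniz formula for the iterated divided-power operator $\pd_1^{(a_1)}\pd_2^{(a_2)}$ gives exactly the displayed form, which is a routine computation.
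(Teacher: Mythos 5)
Your argument is correct and is essentially the paper's proof made explicit: the paper simply invokes the operator identity $\omega^{r+a}\cdot\pd^{(a_1,a_2)} = Q\cdot\omega^r$ (valid because $\pd^{(a_1,a_2)}$ has order $a$), whereas you unpack that identity via the divided-power Leibniz rule and an induction on $a$. The key points you rely on — $S$-linearity, $\pd^{(a_1,a_2)}(M_{d,\bullet})\subseteq M_{d+a,\bullet}$, and the vanishing $\omega^{r+a}\cdot\pd^{(a_1,a_2)}(f)=0$ — are exactly those of the paper, and your observation that the $(0,0)$-term has coefficient $1$ (so characteristic-$p$ degeneration of the other multinomial coefficients is harmless) correctly disposes of the only potential worry.
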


\begin{proof}
    We prove the $S$-module statement, which implies the sheaf statement by passing to the associated sheaves. It follows from \eqref{eq:pda1a2-act-ymon} that
    \[ \pd^{(a_1,a_2)}(M_{d,\bullet}) \subseteq M_{d+a,\bullet}.\]
    Consider now an element $f\in F^d_r$, that is, $f\in M_{d,\bullet}$ with $\omega^r\cdot f=0$. Using the fact that $\pd^{(a_1,a_2)}$ is a differential operator of order $a$, we can write
    \[ \omega^{r+a} \cdot \pd^{(a_1,a_2)} = Q \cdot \omega^r,\]
    for some differential operator $Q$. It follows that
    \[ \omega^{r+a} \cdot \pd^{(a_1,a_2)}(f) = Q \cdot \omega^r(f) = 0,\]
    hence $\pd^{(a_1,a_2)}(f)\in F^{d+a}_{r+a}$, as desired.
\end{proof}

\begin{lemma}\label{lem:Lucas-conseq}
    If $q=p^e$, $q'=p^{e-1}$, and if $v,f$ are non-negative integers satisfying $0\leq v< p$, $vq'\leq f <q$, then the binomial cofficient ${f \choose vq'}$ is non-zero (a unit) in $\kk$.
\end{lemma}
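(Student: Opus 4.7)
The plan is to apply Lucas' theorem on binomial coefficients modulo $p$. Writing $f = \sum_{i=0}^{e-1} f_i\, p^i$ with $0\leq f_i < p$ for the base-$p$ expansion of $f$ (which has at most $e$ digits since $f<q=p^e$), and noting that $vq' = v\cdot p^{e-1}$ has base-$p$ expansion consisting of the single digit $v$ in position $e-1$, Lucas' theorem gives
\[
\binom{f}{vq'} \equiv \binom{f_{e-1}}{v}\cdot\prod_{i=0}^{e-2}\binom{f_i}{0} = \binom{f_{e-1}}{v} \pmod{p}.
\]

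Next I would verify that $f_{e-1}\geq v$. The largest $f<q$ whose top digit $f_{e-1}$ is at most $v-1$ is
\[
(v-1)p^{e-1} + \sum_{i=0}^{e-2}(p-1)p^i \;=\; vp^{e-1}-1 \;=\; vq'-1,
\]
so the hypothesis $f\geq vq'$ forces $f_{e-1}\geq v$.

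Finally, since $0\leq v \leq f_{e-1}\leq p-1$, the integer $\binom{f_{e-1}}{v}$ is a product/quotient of factorials of numbers strictly less than $p$, hence involves no factor of $p$ in either the numerator or the denominator. Therefore $\binom{f_{e-1}}{v}$ is a unit in $\kk$, and by the displayed congruence so is $\binom{f}{vq'}$. The only step requiring a bit of care is the digit estimate $f_{e-1}\geq v$, but it is a direct consequence of the inequality $vq'\leq f < q$; there is no essential obstacle.
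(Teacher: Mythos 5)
Your proof is correct and follows essentially the same route as the paper: both reduce via Lucas' theorem to the single-digit binomial coefficient $\binom{u}{v}$ with $u=f_{e-1}=\lfloor f/q'\rfloor$ and check $v\leq u<p$ from the hypothesis $vq'\leq f<q$. The paper phrases this by writing $f=uq'+f'$ with $0\leq f'<q'$ rather than via the full base-$p$ expansion, but the content is identical.
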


\begin{proof}
 If we write $f = uq'+f'$ with $0\leq f'<q'$, then we have $v\leq u<p$ and by Lucas' theorem it follows that
 \[ {f \choose vq'} \equiv {u\choose v} \ \text{mod }p.\]
 Since $v\leq u<p$, this implies that ${f \choose vq'}$ is non-zero in $\kk$.
\end{proof}

As a consequence of the results and arguments established above, we get the following consequence that will play an important role in part (4) of Theorem~\ref{thm:recursive-Fdr}.

\begin{corollary}\label{cor:Fq-Da-action-Fdr}
    Suppose that $q,q'$ are as in Lemma~\ref{lem:Lucas-conseq}, and that $a,r,d$ are integers satisfying $0\leq a<p$, $aq'\leq r\leq(a+1)q'$ and $r\leq d<q$. There exists a $T$-equivariant locally split inclusion
    \[\a:F^{q'}(H_{a,0}) \oo \mc{F}^{d-aq'}_{r-aq'} \lra \mc{F}^d_r,\]
    induced by Lemma~\ref{lem:del-a1-a2-ops} and the identification of $F^{q'}(H_{a,0})$ with the $\kk$-linear span of $\pd^{(a_1q',a_2q')}$, $a_1+a_2=a$.
\end{corollary}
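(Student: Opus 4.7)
My plan is to construct $\alpha$ from the differential operator maps of Lemma~\ref{lem:del-a1-a2-ops}, then establish local splitness by verifying fiber-wise injectivity at the two $T$-fixed points of $\PP^1$ and propagating via $T$-equivariance.

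For the construction, note that for each pair $(a_1,a_2)$ with $a_1+a_2=a$, Lemma~\ref{lem:del-a1-a2-ops} produces an $\mc{O}_{\PP^1}$-linear map $\pd^{(a_1q',a_2q')}\colon\mc{F}^{d-aq'}_{r-aq'}\to\mc{F}^d_r$. A weight computation using~\eqref{eq:pda1a2-act-ymon} and the conventions of Section~\ref{sec:prelim} shows $\pd^{(a_1q',a_2q')}$ is a $T$-weight vector of weight $(a_1q',a_2q')$, so it spans the summand $L_{a_1q',a_2q'}$ of $F^{q'}(H_{a,0})=\bigoplus_{a_1+a_2=a}L_{a_1q',a_2q'}$. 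Assembling these operators via $\chi^{a_1q',a_2q'}\oo f\mapsto\pd^{(a_1q',a_2q')}(f)$ therefore defines the desired $T$-equivariant map $\alpha$.

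To verify fiber-wise injectivity at $[1:0]$, work on $D_+(x_1)$ with the $A$-basis $\{f_i\}_{0\leq i\leq r-aq'-1}$ of $\mc{F}^{d-aq'}_{r-aq'}$ provided by Lemma~\ref{lem:local-basis-Fdr}. Evaluating at $t=0$ reduces $f_i$ to $\frac{1}{y_1^{i+1}y_2^{d-aq'-i+1}}$, and~\eqref{eq:pda1a2-act-ymon} yields
\[\alpha(\chi^{a_1q',a_2q'}\oo f_i)\big|_{t=0}=(-1)^a\binom{j}{a_1q'}\binom{d-j}{a_2q'}\cdot\frac{1}{y_1^{j+1}y_2^{d-j+1}},\qquad j:=i+a_1q'.\]
The assumption $aq'\leq r\leq(a+1)q'$ gives $0\leq i\leq q'-1$, and combined with $0\leq a_1,a_2\leq a<p$ and $aq'+i\leq r-1\leq d-1<q$, one can verify that both binomials lie in the regime of Lemma~\ref{lem:Lucas-conseq} and are therefore units in $\kk$. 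Moreover, since each $j=a_1q'+i$ with $0\leq i<q'$ uniquely determines the pair $(a_1,i)$, the $(a+1)(r-aq')$ resulting values of $j$ are pairwise distinct, and the images land in independent coordinate lines of $D^dU$, so they are $\kk$-linearly independent.

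The symmetric argument on $D_+(x_2)$ (swap $x_1\leftrightarrow x_2$ and $y_1\leftrightarrow y_2$) settles fiber-wise injectivity at $[0:1]$. The locus in $\PP^1$ on which $\alpha$ fails to be fiber-wise injective is closed and $T$-invariant, and the only closed $T$-invariant subsets of $\PP^1$ are $\emptyset$, $\{[1:0]\}$, $\{[0:1]\}$, $\{[1:0],[0:1]\}$, and $\PP^1$; having ruled out the two fixed points, this locus must be empty, so $\alpha$ is fiber-wise injective everywhere, hence a locally split inclusion. I expect the Lucas-type unit check to be the most delicate step, as it is precisely here that all three hypotheses $d<q$, $r\leq(a+1)q'$, and $r\leq d$ are used simultaneously to place both binomial coefficients within the scope of Lemma~\ref{lem:Lucas-conseq}.
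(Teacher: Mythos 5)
Your proposal is correct and follows essentially the same route as the paper: the same construction of $\a$ from the operators $\pd^{(a_1q',a_2q')}$, the same local basis $\{f_i\}$ from Lemma~\ref{lem:local-basis-Fdr}, the same coefficient computation via \eqref{eq:pda1a2-act-ymon}, the same appeal to Lemma~\ref{lem:Lucas-conseq} for the unit binomials, and the same distinctness-of-$j$ observation using $r-aq'\leq q'$. The only (valid, slightly cleaner) variation is the endgame: the paper shows the image is a free direct summand over all of $D_+(x_1)$ by the leading-term argument, whereas you check fiberwise injectivity only at the two $T$-fixed points and then propagate using that the degeneracy locus is closed and $T$-invariant.
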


\begin{proof}
    Since $\{y_1,y_2\}$ is the dual basis to $\{x_1,x_2\}$, and since the derivations $\{\pd_1,\pd_2\}$ are dual to $\{y_1,y_2\}$, it follows that $T$-action on $U$ naturally induces an isomorphism $\kk\cdot\pd^{(u,v)} \simeq L_{u,v}$. This implies that the $\kk$-linear span of $\pd^{(a_1q',a_2q')}$, $a_1+a_2=a$, is $T$-isomorphic to $F^{q'}(H_{a,0})$. Via this identification, the map
    \begin{equation}\label{eq:def-Fq'Ha0-action}
    \a:F^{q'}(H_{a,0}) \oo \mc{F}^{d-aq'}_{r-aq'} \lra \mc{F}^d_r,\quad \pd^{(a_1q',a_2q')}\oo f \lra \pd^{(a_1q',a_2q')}(f)
    \end{equation}
    is $T$-equivariant, so it suffices to check that it is a locally split inclusion. We do so on the affine chart $D_+(x_1)$, while the analogous verification on $D_+(x_2)$ follows by symmetry. To that end, we consider as in Lemma~\ref{lem:local-basis-Fdr} the $A$-basis of $\mc{F}^{d-aq'}_{r-aq'}(D_+(x_1))$ given by 
    \[ f_i' = \frac{1}{y_1^{1+i}y_2^{1+d-aq'-i}}+(\text{lower terms}),\ i=0,\cdots,r-aq'-1.\]
    It follows from \eqref{eq:pda1a2-act-ymon} that for $a_1+a_2=a$ and for $i=0,\cdots,r-aq'-1$ one has
    \[ \pd^{(a_1q',a_2q')}(f_i') = (-1)^{aq'}\cdot{i+a_1q' \choose a_1q'}\cdot{d-a_1q'-i \choose a_2q'}\cdot\frac{1}{y_1^{1+i+a_1q'}y_2^{1+d-a_1q'-i}}+(\text{lower terms}).\]
    Since $d,r<q$, it follows from Lemma~\ref{lem:Lucas-conseq} that each of the leading coefficients $(-1)^{aq'}\cdot{i+a_1q' \choose a_1q'}\cdot{d-a_1q'-i \choose a_2q'}$ is a unit in $\kk$. Moreover, since $r-aq'\leq q'$, the leading monomials $\frac{1}{y_1^{1+i+a_1q'}y_2^{1+d-a_1q'-i}}$ are also distinct as $i,a_1$ vary with $0\leq i\leq r-aq'-1$ and $0\leq a_1\leq a$. It follows that \eqref{eq:def-Fq'Ha0-action} is injective, and the sections $\pd^{(a_1q',a_2q')}(f_i')$ generate a free $A$-module which is a direct summand of $D^dU\oo A$. It follows that on $D_+(x_1)$ the composition
    \[ F^{q'}(H_{a,0}) \oo \mc{F}^{d-aq'}_{r-aq'} \lra \mc{F}^d_r \lra D^dU\oo\mc{O}_{\PP^1}\]
    is a split inclusion, which forces \eqref{eq:def-Fq'Ha0-action} to also be a split inclusion on $D_+(x_1)$.
\end{proof}

\begin{corollary}\label{cor:F_q'-vs-F_aq'}
    If $q'=p^{e-1}$, $q=p^e$, $1\leq a<p$ and $aq'\leq d<q$ then there is an isomorphism
    \[ \a':F^{q'}(H_{a-1,0}) \oo \mc{F}^{d-(a-1)q'}_{q'} \lra \mc{F}^d_{aq'}\]
    induced by differentiation as in Corollary~\ref{cor:Fq-Da-action-Fdr}. 
\end{corollary}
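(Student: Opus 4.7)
The plan is to invoke Corollary~\ref{cor:Fq-Da-action-Fdr} directly and then upgrade the locally split inclusion to an isomorphism via a rank count.

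First, I would apply Corollary~\ref{cor:Fq-Da-action-Fdr} with the substitutions $a\mapsto a-1$ and $r\mapsto aq'$, keeping $d$ as is. The hypotheses to verify are: $0\le a-1<p$, which holds since $1\le a<p$; the double inequality $(a-1)q'\le aq'\le ((a-1)+1)q'=aq'$, which is automatic; and $aq'\le d<q$, which is given. Thus Corollary~\ref{cor:Fq-Da-action-Fdr} produces a $T$-equivariant locally split inclusion
\[
\a' \colon F^{q'}(H_{a-1,0}) \oo \mc{F}^{d-(a-1)q'}_{aq'-(a-1)q'} \lra \mc{F}^d_{aq'},
\]
and since $aq'-(a-1)q'=q'$, the source is precisely $F^{q'}(H_{a-1,0})\oo \mc{F}^{d-(a-1)q'}_{q'}$, and the map is given by the differentiations $\pd^{(a_1 q', a_2 q')}$ for $a_1+a_2=a-1$.

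Next, I would compare ranks. On the target side, since $d\ge aq'$, the sheaf $\mc{F}^d_{aq'}$ has rank $aq'$ by \eqref{eq:rank-det-Fdr}. On the source side, $H_{a-1,0}$ is an $a$-dimensional $T$-representation, and for the second factor we have $d-(a-1)q'\ge aq'-(a-1)q'=q'$, so $\mc{F}^{d-(a-1)q'}_{q'}$ has rank $q'$. The product therefore has rank $a\cdot q' = aq'$, matching the target.

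Finally, a locally split inclusion of vector bundles of equal rank is automatically an isomorphism, since on any open chart where $\a'$ admits a splitting, its cokernel is a locally free sheaf of rank $0$, hence zero. This forces $\a'$ to be an isomorphism globally, completing the proof. No step here should present any real obstacle — the main content has already been packaged into Corollary~\ref{cor:Fq-Da-action-Fdr}, and the present statement is essentially a rank-matching corollary of it.
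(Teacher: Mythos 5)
Your proposal is correct and follows essentially the same route as the paper: set $r=aq'$, apply Corollary~\ref{cor:Fq-Da-action-Fdr} with $a-1$ in place of $a$ to get a $T$-equivariant locally split inclusion, and conclude by noting that source and target are both locally free of rank $aq'$. The hypothesis checks and the rank computation are exactly what the paper's (terser) proof relies on.
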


\begin{proof}
    We take $r=aq'$, so that $(a-1)q'\leq r\leq aq'$ is satisfied, hence we can apply Corollary~\ref{cor:Fq-Da-action-Fdr} in order to conclude that the map
    \[\a':F^{q'}(H_{a-1,0}) \oo \mc{F}^{d-(a-1)q'}_{r-(a-1)q'} \lra \mc{F}^d_r\] 
    is a locally split inclusion. Since both the source and the target are locally free of rank $aq'$, it follows that $\a'$ is an isomorphism.
\end{proof}

\subsection{The proof of part (4) of Theorem~\ref{thm:recursive-Fdr}.} It will be useful to restrict the possible $T$-equivariant summands that appear in $\mc{F}^d_r$. We show the following (see also \eqref{eq:char-0-Tequiv-Fdr} for the characteristic $0$ statement).

\begin{lemma}\label{lem:weights-Fdr} Let $d_0,r_0\in\bb{Z}_{\geq 0}\cup\{\infty\}$. If the isomorphisms in Theorem~\ref{thm:recursive-Fdr} hold for $d\leq d_0$, $r\leq r_0$, then for all such $d,r$ we have that every $T$-equivariant summand $L_{u,v}(i)$ of $\mc{F}^d_r$ satisfies $d-r+1\leq u,v\leq d$. 
\end{lemma}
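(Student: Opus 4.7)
I would prove the lemma by strong induction on $d+r$, applying whichever of the four cases of Theorem~\ref{thm:recursive-Fdr} is available (which by hypothesis gives an actual isomorphism of $T$-equivariant sheaves up to $(d_0,r_0)$) and tracking how the torus weights propagate. Throughout, a summand $L_{u,v}(i)$ of $H_{a,b}\otimes \mc{O}_{\PP^1}(j)$ has $b\leq u,v\leq a$, and under a Frobenius twist by $F^{q'}$ the corresponding weight range becomes $\{u,v\in q'\cdot\{0,1,\dots,a\}\}$. These are the only two structural facts about weights that I will use, beyond additivity under tensor product.

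First I would dispense with the base case, namely case~(1): if $d<r$, then $\mc{F}^d_r = H_{d,0}\otimes\mc{O}_{\PP^1}$, whose weights $(u,v)$ satisfy $0\leq u,v\leq d$, and the lower bound $d-r+1\leq 0$ is automatic. Next, I would treat case~(2), where $d\geq q+r-1$ and $\mc{F}^d_r = L_{q,q}\otimes \mc{F}^{d-q}_r(-q)$: any weight $(u,v)$ of $\mc{F}^d_r$ is of the form $(u'+q,v'+q)$ where $(u',v')$ is a weight of $\mc{F}^{d-q}_r$, and by induction $(d-q)-r+1\leq u',v'\leq d-q$, which after shifting by $q$ gives exactly $d-r+1\leq u,v\leq d$. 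For case~(3), the weight bounds can be read off directly: the weights in $H_{d,q}(-q)$ satisfy $q\leq u,v\leq d$, and the inequality $q\geq d-r+1$ built into the hypothesis $d\leq q+r-1$ delivers the lower bound; the weights in $H_{q-1,d-r+1}(r-q)$ satisfy $d-r+1\leq u,v\leq q-1\leq d$ on the nose.

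The main step is case~(4), where $r\leq d\leq q-1$ and
\[\mc{F}^d_r = F^{q'}(H_{a,0})\otimes\mc{F}^{d-aq'}_{r-aq'}\;\bigoplus\; F^{q'}(H_{a-1,0})\otimes\mc{F}^{d-r+q'}_{(a+1)q'-r}(r-aq'),\]
with $aq'\leq r<(a+1)q'$. For a summand $(u,v)$ of the first term, I would write $u=(a-j)q'+u'$, $v=jq'+v'$ for some $0\leq j\leq a$ and some weight $(u',v')$ of $\mc{F}^{d-aq'}_{r-aq'}$; the inductive bounds $d-r+1\leq u',v'\leq d-aq'$ then give $d-r+1\leq u,v\leq d$ after adding the Frobenius contribution, using $0\leq (a-j)q',jq'\leq aq'$. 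For a summand of the second term, I would similarly decompose $u=(a-1-j)q'+u'$, $v=jq'+v'$, where the $\mc{O}_{\PP^1}(r-aq')$ twist is irrelevant to the weights and the inductive hypothesis on $\mc{F}^{d-r+q'}_{(a+1)q'-r}$ yields $d-aq'+1\leq u',v'\leq d-r+q'$. The lower bound is then immediate from $aq'\leq r$, while the upper bound becomes $u\leq (a-1)q'+(d-r+q')=d-(r-aq')\leq d$.

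The routine part is the arithmetic of weights in each branch; the step that will require the most care is case~(4), specifically checking that the Frobenius factors $F^{q'}(H_{a,0})$ and $F^{q'}(H_{a-1,0})$ have exactly the right span so that tensoring with the inductively controlled summands of the smaller $\mc{F}$-sheaves still lands inside $[d-r+1,d]^2$, i.e.\ that the two arithmetic relations $aq'\leq r\leq (a+1)q'$ and $r\leq d\leq q-1$ are sharp enough to prevent the upper-range weights of $F^{q'}(H_{a,0})$ from overshooting~$d$, and the lower-range weights of the second summand from undershooting $d-r+1$. Once these are verified case by case as above, the inductive step closes and the lemma follows.
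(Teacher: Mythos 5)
Your proof is correct and follows essentially the same route as the paper's: induction over the pairs $(d,r)$, running through the four cases of Theorem~\ref{thm:recursive-Fdr} and propagating the weight bounds, with the identical arithmetic in case~(4) (your decomposition $u=(a-j)q'+u'$ is the paper's $u_0+v_0=a$ bookkeeping in different notation). The two inductive estimates you single out as the delicate points — $u\leq (d-aq')+aq'=d$ for the first summand and $u\geq d-aq'+1\geq d-r+1$, $u\leq d-(r-aq')\leq d$ for the second — are exactly the inequalities the paper verifies.
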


\begin{proof}
    We argue by induction on $(d,r)$, considering each of the four cases from Theorem~\ref{thm:recursive-Fdr}. 
    
    In case (1) we have that $L_{u,v}$ is a $T$-equivariant summand of $H_{d,0}$, hence $u,v\geq 0$ and $u+v=d$. Since $d-r+1\leq 0$, the desired conclusion follows.

    In case (2), we have that $L_{u-q,v-q}(i+q)$ is a $T$-equivariant summand of $\mc{F}^{d-q}_r$, hence by induction we have 
    \[d-q-r+1\leq u-q,v-q\leq d-q,\] 
    which implies the desired inequalities for $u,v$.

    In case (3) we have two possibilities:
    \begin{itemize}
        \item If $L_{u,v}$ is a $T$-equivariant summand of $H_{d,q}$ then $u+v=d+q$, and $u,v\geq q\geq d-r+1$. In particular, 
        \[u,v \leq (u+v)-q = d,\] 
        as desired.
        \item If $L_{u,v}$ is a $T$-equivariant summand of $H_{q-1,d-r+1}$ then $u,v\geq d-r+1$ and $u+v=d+q-r$, hence
        \[u,v \leq (u+v)-(d-r+1) = q-1 \leq d,\] 
        as desired. 
    \end{itemize}
    In case (4) we have again two possibilities:
    \begin{itemize}
     \item If $L_{u,v}(i)$ is a $T$-equivariant summand of $F^{q'}(H_{a,0}) \oo \mc{F}^{d-aq'}_{r-aq'}$, then there exist $u_0,v_0\geq 0$, $u_0+v_0=a$, such that $L_{u-u_0q',v-v_0q'}(i)$ is a $T$-equivariant summand of $\mc{F}^{d-aq'}_{r-aq'}$. The lower bound $u,v\geq d-r+1=(d-aq')-(r-aq')+1$ follows by induction. For the upper bound, we have again by induction
     \[ u = (u-u_0q') + u_0q' \leq (d-aq')+u_0q' \leq d,\]
     and similarly $v\leq d$, as desired.
     \item If $L_{u,v}(i)$ is a $T$-equivariant summand of $F^{q'}(H_{a-1,0}) \oo \mc{F}^{d-r+q'}_{(a+1)q'-r}(r-aq')$, then there exist $u_0,v_0\geq 0$, $u_0+v_0=a-1$, such that $L_{u-u_0q',v-v_0q'}(i+aq'-r)$ is a $T$-equivariant summand of $\mc{F}^{d-r+q'}_{(a+1)q'-r}$. For the upper bound, we obtain using induction and the inequality $r\geq aq'$ that
    \[ u = (u-u_0q') + u_0q' \leq (d-r+q')+u_0q' \leq d-r+aq' \leq d,\]
    and similarly $v\leq d$. For the lower bound, we have
    \[ u \geq (u-u_0q') \geq (d-r+q') - ((a+1)q'-r) + 1 = d-aq'+1\geq d-r+1,\]
    and similarly $v\geq d-r+1$, concluding our proof.\qedhere
    \end{itemize}
\end{proof}

We next identify some trivial summands of the sheaf $\mc{F}^d_r$ under the hypothesis of Corollary~\ref{cor:Fq-Da-action-Fdr}. For $d_1,d_2\geq 0$ and $q'$ as above, we write
\begin{equation}\label{eq:d12-divrem-byq}
 d_1=u_1q'+v_1, \quad d_2=u_2q'+v_2,\quad\text{with }0\leq v_1,v_2<q',
\end{equation}
and note that if $d=d_1+d_2$ satisfies $d\geq aq'$ then $u_1+u_2\geq a-1$, and the equality $u_1+u_2=a-1$ may hold when $d\leq (a+1)q-2$.

\begin{lemma}\label{lem:triv-summand-Fdr}
 Suppose that $aq'\leq d\leq (a+1)q'-2$ for some $1\leq a<p$, and define the subspace $W\subseteq D^dU$ given by
 \[ W = \op{Span}_{\kk}\left\{\left.\frac{1}{y_1^{1+d_1}y_2^{1+d_2}} \right| u_1+u_2=a-1 \right\},\]
 where $u_1,u_2$ are determined by $d_1,d_2$ via \eqref{eq:d12-divrem-byq}. We have an isomorphism $W\simeq F^{q'}(H_{a-1,0}) \oo H_{q'-1,d-aq'+1}$ which is $T$-equivariant, and moreover
 \[ \mc{W}^d_r := W\oo\mc{O}_{\PP^1}\text{ is a direct summand of }\mc{F}^d_r\quad\text{ if }r\geq aq'.\]
\end{lemma}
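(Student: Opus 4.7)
The plan proceeds in two stages. First I would establish the $T$-equivariant identification of $W$ with $F^{q'}(H_{a-1,0}) \oo H_{q'-1,d-aq'+1}$ as abstract $T$-representations, and second I would show that the constant subsheaf $\mc{W}^d_r$ embeds into $\mc{F}^d_r$ as a direct summand, with the splitting obtained by analyzing the $T$-weight decomposition of $\mc{F}^d_r$.

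\textbf{The weight identification.} I would simply match dimensions and weights. For $(d_1,d_2)$ with $d_1+d_2=d$ and $u_1+u_2=a-1$ (where $d_i=u_iq'+v_i$, $0\leq v_i<q'$), the equation $v_1+v_2=d-(a-1)q'$ combined with $0\leq v_i<q'$ forces $v_i\in[d-aq'+1,\,q'-1]$; this range is non-empty precisely because $d\leq(a+1)q'-2$. The weights $(u_1q',u_2q')$ with $u_1+u_2=a-1$ are exactly those of $F^{q'}(H_{a-1,0})$, and the weights $(v_1,v_2)$ satisfying the above constraints are exactly those of $H_{q'-1,d-aq'+1}$, using $(q'-1)+(d-aq'+1)=d-(a-1)q'$. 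The $T$-weight $(d_1,d_2)=(u_1q'+v_1,\,u_2q'+v_2)$ of each basis element of $W$ decomposes as the sum, yielding the isomorphism.

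\textbf{Embedding $\mc{W}^d_r$ into $\mc{F}^d_r$.} The hypothesis $d\leq(a+1)q'-2$ gives $d<pq'=q$, so Corollary~\ref{cor:F_q'-vs-F_aq'} applies and provides $\mc{F}^d_{aq'}\simeq F^{q'}(H_{a-1,0})\oo\mc{F}^{d'}_{q'}$ with $d'=d-(a-1)q'\in[q',2q'-2]$. Since $q'-1\leq d'\leq 2q'-1$, I apply Theorem~\ref{thm:recursive-Fdr}(3) one level lower (so $q'$ plays the role of $q$, and $r=q'$) to split $\mc{F}^{d'}_{q'}\simeq H_{d',q'}(-q')\oplus H_{q'-1,d-aq'+1}\oo\mc{O}_{\PP^1}$. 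Tensoring with $F^{q'}(H_{a-1,0})$ and using the weight identification exhibits $\mc{W}^d_r$ as a direct summand of $\mc{F}^d_{aq'}$. Composing with the inclusion $\mc{F}^d_{aq'}\hookrightarrow\mc{F}^d_r$ from \eqref{eq:ses-Fdrs}, valid for $r\geq aq'$, produces an embedding $\mc{W}^d_r\hookrightarrow\mc{F}^d_r$. A more direct alternative would verify that $\omega^r\cdot\tfrac{1}{y_1^{1+d_1}y_2^{1+d_2}}=0$ in $M$ by a Kummer-theoretic analysis: writing $r=aq'+s$ with $s\leq d-aq'$, the lower bound $v_i\geq d-aq'+1>s$ ensures a base-$p$ carry in the addition $k+(r-k)=r$ for every $k\in[r-d_2,d_1]$, so $\binom{r}{k}\equiv 0\pmod p$ throughout the relevant range.

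\textbf{Splitting via the $T$-weight decomposition.} For each weight $(d_1,d_2)$ in the support of $W$, the weight-$(d_1,d_2)$ eigensheaf $(\mc{F}^d_r)^{(d_1,d_2)}$ is a $T$-equivariant locally free subsheaf of the corresponding eigensheaf of $D^dU\oo\mc{O}_{\PP^1}$, which is the trivial line bundle $L_{d_1,d_2}\oo\mc{O}_{\PP^1}$ spanned by $\tfrac{1}{y_1^{1+d_1}y_2^{1+d_2}}$. Being $T$-equivariant and locally free of rank at most $1$, it has the form $L_{d_1,d_2}\oo\mc{O}_{\PP^1}(-i_0)$ for some $i_0\geq 0$. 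The embedding from the previous step provides a non-zero weight-$(d_1,d_2)$ global section of $\mc{F}^d_r$, so $H^0((\mc{F}^d_r)^{(d_1,d_2)})\neq 0$, which forces $i_0=0$. Hence $L_{d_1,d_2}\oo\mc{O}_{\PP^1}$ is a direct summand of $\mc{F}^d_r$ for every $(d_1,d_2)$ in the support of $W$, and summing over these weights exhibits $\mc{W}^d_r$ as a direct summand of $\mc{F}^d_r$.

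\textbf{Main obstacle.} The most delicate point is the final splitting: the chain $\mc{W}^d_r\hookrightarrow\mc{F}^d_{aq'}\hookrightarrow\mc{F}^d_r$ does not automatically yield a summand of $\mc{F}^d_r$, since a direct summand of a subsheaf need not split off from the ambient sheaf. The weight-by-weight argument circumvents this by exploiting the fact that each $T$-eigensheaf is a subsheaf of a trivial line bundle, so the existence of a single global section pins it down to be the whole trivial line bundle and hence a summand.
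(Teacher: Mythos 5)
Your proposal is correct, and it is non-circular (Theorem~\ref{thm:recursive-Fdr}(3) and Corollary~\ref{cor:F_q'-vs-F_aq'} are both established before this lemma is needed), but it takes a genuinely different and considerably heavier route than the paper. The paper's proof observes that $\mc{W}^d_r$ is visibly a $T$-equivariant direct summand of the trivial bundle $D^dU\oo\mc{O}_{\PP^1}$, so the only thing to check is the containment $\mc{W}^d_r\subseteq\mc{F}^d_r$, i.e.\ that $\omega^r$ annihilates $W$; this is done in a few lines by factoring $\omega^r=\omega^{r-aq'}\cdot\bigl(x_1^{q'}y_1^{q'}+x_2^{q'}y_2^{q'}\bigr)^a$, so that every term is divisible by some $y_1^{a_1q'}y_2^{a_2q'}$ with $a_1+a_2=a$, and any such monomial kills $\frac{1}{y_1^{1+d_1}y_2^{1+d_2}}$ when $u_1+u_2=a-1$. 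This is essentially the "more direct alternative" you sketch at the end of your second step (your binomial-coefficient version works, but the Frobenius factorization avoids Kummer's theorem entirely). Your main route instead deduces the containment indirectly: the abstract embedding via Corollary~\ref{cor:F_q'-vs-F_aq'} and Theorem~\ref{thm:recursive-Fdr}(3) produces, by left-exactness of $H^0$, a non-zero global section of $\mc{F}^d_r$ of each weight $(d_1,d_2)$ in the support of $W$; since the weight-$(d_1,d_2)$ space of $H^0(D^dU\oo\mc{O}_{\PP^1})=D^dU$ is one-dimensional, that section must be the monomial $\frac{1}{y_1^{1+d_1}y_2^{1+d_2}}$ itself, whence $\omega^r\cdot W=0$ and the splitting follows by restricting the projection of the ambient trivial bundle. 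Two small cautions: the phrase "weight-$(d_1,d_2)$ eigensheaf of $\mc{F}^d_r$" is not well defined, since $T$ acts non-trivially on $\PP^1$ and $\mc{F}^d_r$ does not decompose as $\bigoplus_{d_1+d_2=d}\bigl(\mc{F}^d_r\cap(L_{d_1,d_2}\oo\mc{O}_{\PP^1})\bigr)$ — the correct statement is the one about global sections just given; and your "obstacle" paragraph overstates the difficulty, since once $W\oo\mc{O}_{\PP^1}\subseteq\mc{F}^d_r$ is known, being a summand of the ambient trivial bundle immediately makes it a summand of any intermediate subsheaf.
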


\begin{proof}
 The $T$-equivariant description of $W$ follows (recalling the convention \eqref{eq:forget-bw2Udual}) from the identities
 \[ \bigoplus_{u_1+u_2=a-1} L_{u_1q',u_2q'} = F^{q'}(H_{a-1,0})\quad\text{ and }\quad \bigoplus_{\substack{0\leq v_1,v_2\leq q'-1 \\ v_1+v_2 = d-(a-1)q'}} L_{v_1,v_2} = H_{q'-1,d-aq'+1}.\]

It is clear that $W\oo\mc{O}_{\PP^1}$ is a direct summand of $D^dU\oo\mc{O}_{\PP^1}$, so to show that $\mc{W}^d_r$ is a summand of~$\mc{F}^d_r$, it suffices to check that $\omega^r$ annihilates $W$. Since
\[ \omega^r = \omega^{r-aq'} \cdot \omega^{aq'} = \omega^{r-aq'} \cdot \left(x_1^{q'}y_1^{q'} + x_2^{q'}y_2^{q'} \right)^a,\]
it follows that each term in the monomial expansion of $\omega^r$ is divisible by some $y_1^{a_1q'}y_2^{a_2q'}$ with $a_1+a_2=a$. It follows that if $u_1+u_2=a-1$ then for each such $a_1,a_2$, either $a_1>u_1$ or $a_2>u_2$, which in turn implies $d_1-a_1q'<0$ or $d_2-a_2q'<0$. We conclude that $y_1^{a_1q'}y_2^{a_2q'}\cdot W=0$, hence $\omega^r\cdot W=0$, as desired.
\end{proof}

Notice that the isomorphism in Corollary~\ref{cor:F_q'-vs-F_aq'} restricts to an isomorphism
\[ \alpha' : F^{q'}(H_{a-1,0}) \oo \mc{W}^{d-(a-1)q'}_{q'} \lra \mc{W}^d_{aq'}.\]

\begin{lemma}\label{lem:twisted-Euler}
    Let $q,q'$ be as defined in Lemma~\ref{lem:Lucas-conseq}, let $d'=d-(a-1)q'$, and assume that $(a+1)q'-1\leq d<q$. We define
    \[ E^{(q')} =\pd^{(q',0)}y_1^{q'}+\pd^{(0,q')}y_2^{q'},\]
    and consider $Q=\pd^{(a_1'q',a_2'q')}$, with $a_1'+a_2'=a-1$. For every $f\in D^{d'}U$, we have
    \[ Q(E^{(q')}(f)) = E^{(q')}(Q(f)) + (a-1)Q(f).\]
\end{lemma}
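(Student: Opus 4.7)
The identity is equivalent to the commutator relation $[Q, E^{(q')}] = (a-1)\, Q$ between differential operators, which I would prove directly in the divided-power Weyl-type algebra generated by $y_i$ and $\pd_i^{(\bullet)}$, independent of the module $M$. The plan is to reduce everything to two applications of the divided-power Leibniz rule combined with Lucas' theorem, which, because $q' = p^{e-1}$ is a prime power, will collapse each Leibniz sum to just its two boundary terms.

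First I would simplify $E^{(q')}$. Expanding by Leibniz gives
\[ \pd_i^{(q')}(y_i^{q'}\, g) = \sum_{k=0}^{q'} \binom{q'}{k}\, y_i^{q'-k}\, \pd_i^{(q'-k)}(g), \]
and Lucas' theorem forces $\binom{q'}{k} \equiv 0 \pmod{p}$ for $0 < k < q'$. Only the terms $k = 0$ and $k = q'$ survive, yielding the operator identity $\pd_i^{(q')}\, y_i^{q'} = y_i^{q'}\, \pd_i^{(q')} + 1$ for each $i = 1, 2$, and therefore
\[ E^{(q')} = y_1^{q'}\pd_1^{(q')} + y_2^{q'}\pd_2^{(q')} + 2. \]

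Next I would compute the commutator. Since all divided-power operators $\pd_i^{(\bullet)}$ commute among themselves, $Q$ commutes with $\pd_1^{(q')}$ and $\pd_2^{(q')}$, and the constant $2$ commutes with everything, so
\[ [Q, E^{(q')}] = [Q, y_1^{q'}]\, \pd_1^{(q')} + [Q, y_2^{q'}]\, \pd_2^{(q')}. \]
Because $\pd_2^{(a_2'q')}$ commutes with $y_1^{q'}$, we have $[Q, y_1^{q'}] = \pd_2^{(a_2'q')}\, [\pd_1^{(a_1'q')}, y_1^{q'}]$. A second application of Leibniz and Lucas to $\pd_1^{(a_1'q')}(y_1^{q'}\, g)$ leaves once again only the $k=0$ and $k=q'$ contributions, giving $[\pd_1^{(a_1'q')}, y_1^{q'}] = \pd_1^{((a_1'-1)q')}$ when $a_1' \geq 1$ (and $0$ when $a_1' = 0$). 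Using the composition law $\pd_1^{((a_1'-1)q')}\, \pd_1^{(q')} = \binom{a_1'q'}{q'}\, \pd_1^{(a_1'q')}$ together with the Lucas identity $\binom{a_1'q'}{q'} \equiv a_1' \pmod{p}$, I obtain $[Q, y_1^{q'}]\, \pd_1^{(q')} = a_1'\, Q$, and symmetrically $[Q, y_2^{q'}]\, \pd_2^{(q')} = a_2'\, Q$. Summing and invoking $a_1' + a_2' = a-1$ yields $[Q, E^{(q')}] = (a-1)\, Q$.

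The whole argument is combinatorial bookkeeping in the algebra of divided-power operators; the real content is that Lucas' theorem for the prime power $q'$ collapses every Leibniz sum to two terms, which is what produces such a clean commutator. The only mild subtlety is the boundary case $a_i' = 0$, in which the individual commutator $[\pd_i^{(0)}, y_i^{q'}]$ vanishes, but this is benign because the prefactor $a_i'$ in the closed-form answer vanishes as well, so the uniform identity $(a_1' + a_2')\, Q$ is preserved.
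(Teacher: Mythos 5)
Your proof is correct, but it takes a genuinely different route from the paper's. The paper works directly on the inverse-monomial basis of the local cohomology module: using \eqref{eq:pda1a2-act-ymon} and Lucas' theorem it shows (see \eqref{eq:Eqpr-action}) that $E^{(q')}$ acts on each basis vector $y_1^{-1-d_1}y_2^{-1-d_2}$ as the scalar $-(u_1+u_2)$ with $u_i=\lfloor d_i/q'\rfloor$, computes $Q(f)$ explicitly, and deduces the identity from the bookkeeping $(u_1+u_2)-(u_1'+u_2')=a-1$; this is where the hypothesis $d<q$ enters, to keep the $u_i$ below $p$ so that Lucas applies without carries. You instead prove the commutator relation $[Q,E^{(q')}]=(a-1)Q$ once and for all in the ring of divided-power differential operators, via the operator Leibniz rule $\pd_1^{(n)}\circ y_1^{m}=\sum_k \binom{m}{k}y_1^{m-k}\pd_1^{(n-k)}$ collapsed by Lucas to its two boundary terms, together with the composition law $\pd_1^{((a_1'-1)q')}\pd_1^{(q')}=\binom{a_1'q'}{q'}\pd_1^{(a_1'q')}$ and $\binom{a_1'q'}{q'}\equiv a_1'$. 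Since these are identities in $\mc{D}_R$, they hold on the $\mc{D}_R$-module $M$ and in particular on $D^{d'}U$, so your argument is complete; note that it only uses $a_1',a_2'<p$ and dispenses with the numerical hypotheses $(a+1)q'-1\leq d<q$ entirely, which is a mild strengthening. What the paper's eigenvalue computation buys in exchange is the explicit diagonal action \eqref{eq:Eqpr-action} of $E^{(q')}$, which is reused verbatim later in the proof of Theorem~\ref{thm:recursive-Fdr}(4) to show that the sections $g_k=\beta(f_k)$ have unit leading coefficients; your operator identity alone would not supply that.
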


\begin{proof}
 We first note that if $d_1,d_2$ are as in \eqref{eq:d12-divrem-byq} and $d_1+d_2<q$, then
 \begin{equation}\label{eq:Eqpr-action}
  E^{(q')}\left(\frac{1}{y_1^{1+d_1}y_2^{1+d_2}}\right) \overset{\eqref{eq:pda1a2-act-ymon}}{=} (-1)^{q'}\cdot\left[{d_1\choose q'} + {d_2\choose q'} \right] \frac{1}{y_1^{1+d_1}y_2^{1+d_2}} = (-u_1-u_2)\frac{1}{y_1^{1+d_1}y_2^{1+d_2}}
 \end{equation}
where the last equality uses the fact that $(-1)^{q'}=-1$ in $\kk$, and Lucas' theorem. To prove the lemma, it suffices to consider 
\[f=\frac{1}{y_1^{1+d'_1}y_2^{1+d'_2}},\text{ where }d'_1+d'_2=d'.\] 
If we write $d'_i = u'_iq'+v'_i$, $d_i = d'_i+a'_iq'$ and use \eqref{eq:d12-divrem-byq}, we get $u_i=u'_i+a'_i$, and the hypothesis $d<q$ implies that $u_i<p$. We can then use \eqref{eq:pda1a2-act-ymon} and Lucas' theorem to obtain
\[ Q(f)= (-1)^{(a_1'+a_2')q'}{d'_1+a'_1q'\choose a'_1q'}\cdot{d'_2+a'_2q'\choose a'_2q'}\cdot \frac{1}{y_1^{1+d'_1+a'_1q'}y_2^{1+d'_2+a'_2q'}} = (-1)^{a-1}{u_1\choose a'_1}\cdot{u_2\choose a'_2}\cdot \frac{1}{y_1^{1+d_1}y_2^{1+d_2}}.\]
Combining the above with \eqref{eq:Eqpr-action} it follows that
\[ Q(E^{(q')}(f)) = (-u'_1-u'_2)\cdot Q(f) =(-1)^a(u'_1+u'_2)\cdot{u_1\choose a'_1}\cdot{u_2\choose a'_2}\cdot \frac{1}{y_1^{1+d_1}y_2^{1+d_2}}.\]
We next compute, using the same notation and reasoning as above,
\[ E^{(q')}(Q(f)) \overset{\eqref{eq:pda1a2-act-ymon}}{=} (-1)^{a-1} {u_1\choose a'_1}\cdot{u_2\choose a'_2}\cdot E^{(q')}\left[\frac{1}{y_1^{1+d_1}y_2^{1+d_2}}\right] \overset{\eqref{eq:Eqpr-action}}{=} (-1)^a(u_1+u_2)\cdot{u_1\choose a'_1}\cdot{u_2\choose a'_2}\cdot \frac{1}{y_1^{1+d_1}y_2^{1+d_2}}.\]
The desired conclusion follows from the identity 
\[(-1)^a(u'_1+u'_2) = (-1)^a(u_1+u_2) + (-1)^{a-1}(a-1),\] 
which in turns holds since $(u_1-u'_1)+(u_2-u_2') = a'_1+a'_2=a-1$.
\end{proof}

We are now ready to verify the last case of Theorem~\ref{thm:recursive-Fdr}.

\begin{proof}[Proof of Theorem~\ref{thm:recursive-Fdr}(4)] We fix $d,r$ and assume by induction that Theorem~\ref{thm:recursive-Fdr}(4) holds for all smaller values of the parameters. Consider first the case when $d\leq (a+1)q'-2$. The filtration \cite{gao-raicu}*{(2.7)} reduces to a short exact sequence
\[ 0 \lra F^{q'}(D^{a-1}U) \oo T_{q'}D^{d-(a-1)q'}U \lra D^d U \lra F^{q'}(D^a U) \oo D^{d-aq'}U  \lra 0.\]
Using the $T$-equivariant identifications $D^{a-1}U = H_{a-1,0}$ and $T_{q'}D^{d-(a-1)q'}U=H_{q'-1,d-aq'+1}$, it follows that $F^{q'}(D^{a-1}U) \oo T_{q'}D^{d-(a-1)q'}U=W$, where $W$ is as defined in Lemma~\ref{lem:triv-summand-Fdr}. It follows that the map $\Delta$ in \eqref{eq:def-seq-Fdr} factors through $F^{q'}(D^a U) \oo D^{d-aq'}U$, and we can write
\[ \mc{F}^d_r = \mc{W}^d_r \oplus \ol{\mc{F}}^d_r,\quad\text{where}\quad \ol{\mc{F}}^d_r = \ker\left(F^{q'}(D^a U) \oo D^{d-aq'}U\oo\mc{O}_{\PP^1} \overset{\ol{\Delta}}{\lra} D^{d-r}U(r) \right)\]
We can further factor $\ol{\Delta}$ as
\[ F^{q'}(D^a U) \oo D^{d-aq'}U\oo\mc{O}_{\PP^1} \overset{\ol{\Delta}'}{\lra} F^{q'}(D^a U) \oo D^{d-r}U(r-aq') \overset{\ol{\Delta}''}{\lra} D^{d-r}U(r), \]
where $\ol{\Delta}''$ is the surjection induced by
\[ D^a U \oo \mc{O}_{\PP^1} = \Sym^a U \oo \mc{O}_{\PP^1} \onto \mc{O}_{\PP^1}(a)\]
whose kernel is $\bw^2 U \oo \Sym^{a-1}U \oo \mc{O}_{\PP^1}(-1) = H_{a,1}(-1)$, and $\ol{\Delta}'$ is obtained by tensoring with $F^{q'}(D^a U)=F^{q'}(H_{a,0})$ from the surjection defining the short exact sequence (see \eqref{eq:def-seq-Fdr})
\[ 0 \lra \mc{F}^{d-aq'}_{r-aq'} \lra D^{d-aq'}U\oo\mc{O}_{\PP^1} \lra D^{d-r}U(r-aq') \lra 0.\]
Since $\ker(\ol{\Delta})$ is an extension of $\ker(\ol{\Delta}'')$ by $\ker(\ol{\Delta}')$, It follows that we have a short exact sequence
\begin{equation}\label{eq:ses-Fbardr}
 0 \lra F^{q'}(H_{a,0})\oo \mc{F}^{d-aq'}_{r-aq'} \lra \ol{\mc{F}}^d_r  \lra F^{q'}(H_{a,1}) \oo H_{d-r,0}(r-(a+1)q') \lra 0
\end{equation}
Using the reinterpretation of Theorem~\ref{thm:recursive-Fdr}(4) as in Corollary~\ref{cor:rec-Fdr-sometimes}(i), along with the identification $F^{q'}(H_{a,1}) \oo H_{d-r,0}=F^{q'}(H_{a-1,0}) \oo H_{d-r+q',q'}$, it suffices to verify that \eqref{eq:ses-Fbardr} splits. This is further equivalent to showing
\[ \Ext^1\left(\mc{O}_{\PP^1}(r-(a+1)q'),\mc{F}^{d-aq'}_{r-aq'}\right) = H^1\left(\PP^1,\mc{F}^{d-aq'}_{r-aq'}((a+1)q'-r)\right) = 0,\]
which follows from Corollary~\ref{cor:vanish-H1Fdr} using the inequalities $r-aq'\leq d-aq' < q'$.

It remains to consider the case when $(a+1)q'-1\leq d\leq q-1$. We will construct below a $T$-equivariant exact sequence
\begin{equation}\label{eq:case4-Frd-seq} 
0 \lra F^{q'}(H_{a,0}) \oo \mc{F}^{d-aq'}_{r-aq'} \overset{\a}{\lra} \mc{F}^d_r \lra F^{q'}(H_{a-1,0}) \oo \mc{F}^{d-r+q'}_{(a+1)q'-r}(r-aq') \lra 0.
\end{equation}
To prove that \eqref{eq:case4-Frd-seq} is split, it will then suffice to show that there are no $T$-invariants in
\[ \Ext^1\left(F^{q'}(H_{a-1,0}) \oo \mc{F}^{d-r+q'}_{(a+1)q'-r}(r-aq'),F^{q'}(H_{a,0}) \oo \mc{F}^{d-aq'}_{r-aq'} \right).\]
Suppose otherwise, and consider $T$-equivariant summands $L_{u_1,v_1}(i_1)$ of $\mc{F}^{d-r+q'}_{(a+1)q'-r}$, and $L_{u_2,v_2}(i_2)$ of $\mc{F}^{d-aq'}_{r-aq'}$, such that there are non-zero $T$-invariants in
\[ \Ext^1\left(F^{q'}(H_{a-1,0}) \oo L_{u_1,v_1}(i_1+r-aq'),F^{q'}(H_{a,0}) \oo L_{u_2,v_2}(i_2) \right),\]
or equivalently, there are non-zero $T$-invariants in
\[ F^{q'}\left(H_{a,0}\oo H_{a-1,0}^{\vee}\right) \oo L_{u_2-u_1,v_2-v_1} \oo H^1\left(\PP^1,\mc{O}_{\PP^1}(i_2-i_1-r+aq')\right).\]
By Lemma~\ref{lem:weights-Fdr}, we have that 
\[u_2-u_1,v_2-v_1 \leq (d-aq') - \left((d-r+q')-((a+1)q'-r)+1 \right) = -1.\]
Since every $T$-equivariant summand $L_{u_3,v_3}$ of $H^1\left(\PP^1,\mc{O}_{\PP^1}(i_2-i_1-r+aq')\right)$ satisfies $u_3,v_3\leq -1$, it follows that for some $(u_3,v_3)$ there exists a summand of $F^{q'}\left(H_{a,0}\oo H_{a-1,0}^{\vee}\right)$ of the form $L_{u,v}$, where $u=u_1-u_2-u_3>0$ and $v=v_1-v_2-v_3>0$. We must then have that $q'|u,v$, and 
\[ 0 < u+v = aq'-(a-1)q' = q',\]
which is impossible.

To construct \eqref{eq:case4-Frd-seq} we let $d'=d-(a-1)q'$, $r'=r-aq'$, and build a commutative diagram
\begin{equation}\label{eq:comm-square-part4}
\begin{gathered}
\xymatrix{
 F^{q'}(H_{a-1,0}) \oo \mc{F}^{d'}_{r'} \ar[r]^(.7){\a'} \ar[d]_{\b'} & \mc{F}^d_{aq'} \ar[d]^{\b} \\
 F^{q'}(H_{a,0}) \oo \mc{F}^{d'-q'}_{r'} \ar[r]_(.7){\a} & \mc{F}^d_{r} \\
}
\end{gathered}
\end{equation}
where we think of $F^{q'}(H_{a-1,0})$ and $F^{q'}(H_{a,0})$ as spaces of differential operators as in Corollary~\ref{cor:Fq-Da-action-Fdr}. More precisely, we define $\a$ as in Corollary~\ref{cor:Fq-Da-action-Fdr} (note that $d'-q'=d-aq'$), which is a locally split inclusion because $d<q$. We define $\a'$ as the composition of the inclusion $\mc{F}^{d'}_{r'}\subseteq\mc{F}^{d'}_{q'}$ with the isomorphism from Corollary~\ref{cor:F_q'-vs-F_aq'}, and note that using \eqref{eq:ses-Fdrs} we have $\coker(\a') \simeq F^{q'}(H_{a-1,0})\oo\mc{F}^{d'-r'}_{q'-r'}(r')$. Using \eqref{eq:mult-yi-Fdr} and the inclusion $\mc{F}^d_{aq'}\subseteq\mc{F}^d_r$, we define $\beta'$, $\beta$ via
\[ \beta'(Q \oo f) = Q\cdot\pd^{(q',0)} \oo y_1^{q'}\cdot f + Q\cdot\pd^{(0,q')} \oo y_2^{q'}\cdot f,\qquad\text{and}\qquad \beta(g) = E^{(q')}(g) + (a-1)g.\]
The commutativity of \eqref{eq:comm-square-part4} follows from Lemma~\ref{lem:twisted-Euler}.

The existence of the short exact sequence \eqref{eq:case4-Frd-seq} now reduces to proving that $\coker(\a)\simeq\coker(\a')$. Notice that $\coker(\a),\coker(\a')$ are locally free sheaves of the same rank, namely
\[ a(q'-r') = r - (a+1)r',\]
so it suffices to check that the natural map $\coker(\a')\lra\coker(\a)$ is surjective. This in turn reduces to proving that $\op{Im}(\a) + \op{Im}(\beta) = \mc{F}^d_r$. We check this locally on the chart $D_+(x_1)$, since the analogous statement on $D_+(x_2)$ follows by symmetry. To that end, it suffices to show that for each $k=0,\cdots,r-1$, $\op{Im}(\a) + \op{Im}(\beta)$ contains a section of $\mc{F}^d_r(D_+(x_1))$ of the form
\[ g_k = \text{unit}\cdot\frac{1}{y_1^{1+k}y_2^{1+d-k}}+(\text{lower terms}).\]
For $k = 0,\cdots,aq'-1$, we can take $g_k = \beta(f_k)$, where $\{f_0,\cdots,f_{aq'-1}\}$ is the $A$-basis of $\mc{F}^d_{aq'}(D_+(x_1))$ constructed in Lemma~\ref{lem:local-basis-Fdr} (in the special case $r=aq'$). To see that $g_k$ has the desired form, we set $d_1=k$, $d_2=d-k$ in \eqref{eq:Eqpr-action} to conclude that
\[ g_k = (a-1-u_1-u_2)\cdot\frac{1}{y_1^{1+k}y_2^{1+d-k}}+(\text{lower terms})\]
The assumption $d\geq(a+1)q'-1$ forces $u_1+u_2\geq a$, while $d<q$ implies $u_1+u_2<p$. We conclude that $a-1-p < a-1-u_1-u_2 \leq -1$, that is, $(a-1-u_1-u_2)$ is a unit in $\kk$.

For $k=aq'+i$ with $0\leq i\leq r'-1$, we can take 
\[ g_k = \a\left(\pd^{(aq',0)}\oo f_i'\right) = \pd^{(aq',0)}(f_i') = (-1)^{aq'}{k\choose aq'}\cdot\frac{1}{y_1^{1+k}y_2^{1+d-k}}+(\text{lower terms}),\]
where $f_i'$ are as defined in the proof of Corollary~\ref{cor:Fq-Da-action-Fdr}, and ${k\choose aq'}$ is a unit by Lemma~\ref{lem:Lucas-conseq}.
\end{proof}

\section{Recursive structure for the cohomology of $\mc{F}^d_r$}
\label{sec:coh-Fdr-recursive}

The goal of this section is to establish some consequences of Theorem~\ref{thm:recursive-Fdr} and describe the $T$-equivariant structure of the cohomology of (twists of) the sheaves $\mc{F}^d_r$ as the parameters $d,r$ vary. The results here will then be employed in Section~\ref{sec:Han-Monsky}. We focus on two short exact sequences arising as special cases of \eqref{eq:ses-Fdrs}
\begin{equation}\label{eq:ses-Fdrs-r=1} 
0 \lra L_{d,d}(-d) \lra \mc{F}^d_{r+1} \lra \mc{F}^{d-1}_r(1) \lra 0
\end{equation}
\begin{equation}\label{eq:ses-Fdrs-s-r=1} 
0 \lra \mc{F}^d_r \lra \mc{F}^d_{r+1} \lra L_{d-r,d-r}(2r-d) \lra 0
\end{equation}
and aim to quantify in both cases the failure of exactness of the global sections functor. 

We write $S=\kk[x_1,x_2]$ as before, so that $\PP^1=\op{Proj}(S)$, and consider the natural action of the $2$-dimensional torus $T$ from Section~\ref{sec:pparts-P1}. We will work exclusively in the categories of $T$-equivariant $S$-modules and $T$-equivariant sheaves on $\PP^1$ respectively. Every finitely generated $T$-equivariant $S$-module $M$ has a natural bi-grading, where $M_{u,v}$ is the isotypic component corresponding to the irreducible representation $L_{u,v}$ (see \eqref{eq:act-on-chiuv}), and an associated character defined as the Laurent power series
\[[M] = \sum_{u,v\in\bb{Z}} \dim(M_{u,v})\cdot z_1^u \cdot z_2^v \quad\text{ in }\bb{Z}((z_1,z_2)).\]
More generally, we use the notation $[M]$ to denote the character of a $T$-representation $M$ where all the isotypic components are finite dimensional. If $M$ has finite length (that is, $\dim_{\kk}(M)<\infty$) then we write $M^{\vee} = \Hom_{\kk}(M,\kk)$ with its induced $T$-equivariant structure. We note that in this case $[M]\in\bb{Z}[z_1^{\pm 1},z_2^{\pm 1}]$ is a Laurent polynomial, and we have $[M^{\vee}]=[M]^{\vee}$ where ${}^{\vee}:\bb{Z}[z_1^{\pm 1},z_2^{\pm 1}]\lra \bb{Z}[z_1^{\pm 1},z_2^{\pm 1}]$ denotes the involution sending $z_i \mapsto z_i^{-1}$. With the notation in Section~\ref{sec:pparts-P1} we have
\[ [H_{a,b}] = z_1^az_2^b+z_1^{a-1}z_2^{b+1}+\cdots + z_1^{b+1}z_2^{a-1}+z_1^bz_2^a=(z_1z_2)^b\cdot h_{a-b},\quad\text{ for }a\geq b\geq 0, \]
where $h_d=h_d(z_1,z_2)$ denotes the complete symmetric polynomial of degree $d$ in two variables. We also have
\[ \left[L_{u,v}\oo S \right] = \frac{z_1^uz_2^v}{(1-z_1)(1-z_2)}\quad\text{ for }u,v\in\bb{Z}.\]
We note that every $T$-equivariant $S$-submodule $M\subset L_{u,v}\oo S$ has the form $L_{u,v} \oo I$ for some monomial ideal~$I$, and that $M$ is uniquely determined by its character $[M]$.

We consider the cohomology functors $H^i_*(\PP^1,-) = \bigoplus_{e\in\bb{Z}}H^i_*(\PP^1,-(e))$, and note that
\[ H^0_*(\PP^1,\mc{F}^d_r) = F^d_r,\quad H^0_*(\PP^1,\mc{O}_{\PP^1}) = S,\quad H^1_*(\PP^1,\mc{O}_{\PP^1}) = H^1_{\mf{m}}(S),\]
where $\mf{m}=\langle x_1,x_2\rangle$ is the maximal homogeneous ideal in $S$. As a $T$-representation, we have
\[ H^1_{\mf{m}}(S) = \bigoplus_{u,v<0} L_{u,v},\]
and as an $S$-module it is Artinian (though not finitely generated) with simple socle given by the subrepresentation $L_{-1,-1} = \kk\cdot\frac{1}{x_1x_2}$. It follows that every finitely generated $T$-equivariant submodule $M\subseteq H^1_{\mf{m}}(S)$ has finite length, and it is of the form
\[ M = L_{-1,-1} \oo (S/I)^{\vee},\quad\text{ for $I$ an $\mf{m}$-primary monomial ideal}.\]
The beginning of the long exact sequence in cohomology associated to \eqref{eq:ses-Fdrs-r=1} is given by
\[ 0 \lra L_{d,d}\oo S \lra F^d_{r+1} \lra F^{d-1}_{r} \overset{\delta^d_r}{\lra} L_{d,d}\oo H^1_{\mf{m}}(S) \]
so the failure of exactness of $H^0_*$ can be measured by the image of the connecting homomorphism $\delta^d_r$. We let
\begin{equation}\label{eq:def-Idr-Bdr} \op{Im}(\delta^d_r) = L_{d-1,d-1} \oo B(d,r)^{\vee},\text{ where }B(d,r) = S/I(d,r)
\end{equation}
for some $\mf{m}$-primary monomial ideal $I(d,r)\subset S$. 

Similarly, the long exact sequence in cohomology for \eqref{eq:ses-Fdrs-s-r=1} yields
\[ 0 \lra F^d_r \lra F^d_{r+1} \overset{\gamma^d_r}{\lra} L_{d-r,d-r} \oo S\]
whose failure of exactness can be measured by the cokernel of $\gamma^d_r$. If $d\geq r$ then we write
\begin{equation}\label{eq:def-Jdr-Cdr} \op{coker}(\gamma^d_r) = L_{d-r,d-r} \oo C(d,r),\text{ where }C(d,r) = S/J(d,r)
\end{equation}
for some $\mf{m}$-primary monomial ideal $J(d,r)\subset S$. Note that by \eqref{eq:Fdr-for-d<r} we have $F^d_r = F^d_{r+1}$ when $d<r$, hence if we extended definition \eqref{eq:def-Jdr-Cdr} to $d<r$, it would imply $J(d,r) = 0$ in this range. Since this will be inconvenient for some of the arguments, we will be using a different convention for $d<r$, as explained in Remark~\ref{rem:periodicity-Jdr}.

\begin{remark}\label{rem:GL2-equivariance}
    The reader may notice that the ideals $I(d,r)$ and $J(d,r)$ are in fact $\GL(U)$-equivariant ($U=\kk^2$) since all the maps we considered so far are $\GL(U)$-equivariant if we identify $L_{d,d}$ with $(\bw^2 U)^{\oo d}$.
\end{remark}

We will use Theorem~\ref{thm:recursive-Fdr} to prove the following recursive relations for the ideals $I(d,r)$, $J(d,r)$ (or equivalently for the finite length quotients $B(d,r)$, $C(d,r)$).

\begin{theorem}\label{thm:recursions-Idr-Jdr}
    Suppose that $\op{char}(\kk)=p>0$, $q'\leq r< q$ where $q=p^e$, $q'=p^{e-1}$.
    \begin{enumerate}
        \item If $d\leq r$ then $I(d,r)=\mf{m}^d$.
        \item If $d\geq q+r$ then $J(d,r)=J(d-q,r)$.
        \item If $d\geq q$ then $I(d,r)=I(d-q,r)$. In particular, if $q\leq d\leq q+r$ then $I(d,r)=\mf{m}^{d-q}$.
        \item If $r\leq d\leq q+r-1$ then $J(d,r) = I(q+r-1-d,r)$. In particular, if $q-1\leq d\leq q+r-1$ then $J(d,r)=\mf{m}^{r+q-1-d}$.
        \item If $r\leq d<q$, and if $a$ is such that $aq'\leq r<(a+1)q'$, then
        \[[B(d,r)] = F^{q'}(h_a)\cdot[B(d-aq',r-aq')] - F^{q'}(h_{a-1})\cdot(z_1z_2)^{q'-1}\cdot[C(d-r-1+q',(a+1)q'-r-1)]^{\vee} + [S/\mf{m}^{[aq']}]\]
        \[[C(d,r)] = F^{q'}(h_a)\cdot[C(d-aq',r-aq')] - F^{q'}(h_{a-1})\cdot(z_1z_2)^{q'-1}\cdot[B(d-r+q',(a+1)q'-r-1)]^{\vee}+ [S/\mf{m}^{[aq']}]\]
    \end{enumerate}
\end{theorem}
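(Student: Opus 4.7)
The plan is to establish each of the five parts in order, applying the appropriate case of Theorem~\ref{thm:recursive-Fdr} to the sheaves entering the short exact sequences \eqref{eq:ses-Fdrs-r=1} and \eqref{eq:ses-Fdrs-s-r=1}, and then tracking the image of $\delta^d_r$ and the cokernel of $\gamma^d_r$ in the resulting long exact sequences via the definitions \eqref{eq:def-Idr-Bdr} and \eqref{eq:def-Jdr-Cdr}.

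Parts (1)--(3) will follow from Theorem~\ref{thm:recursive-Fdr}(1)--(3). For part (1), when $d\leq r$ the sheaves $\mc{F}^d_{r+1}$ and $\mc{F}^{d-1}_r$ are trivial bundles by Theorem~\ref{thm:recursive-Fdr}(1), and \eqref{eq:ses-Fdrs-r=1} becomes the standard sequence
\[0 \lra L_{d,d}(-d) \lra D^dU\oo\mc{O}_{\PP^1} \lra D^{d-1}U\oo\mc{O}_{\PP^1}(1) \lra 0.\]
The long exact sequence identifies $\op{Im}(\delta^d_r)$ with the kernel of the natural map $L_{d,d}\oo H^1_{\mf{m}}(S)[-d] \to D^dU\oo H^1_{\mf{m}}(S)$ induced by the $\GL(U)$-equivariant inclusion $L_{d,d}(-d)\hookrightarrow D^dU\oo\mc{O}_{\PP^1}$; an explicit local computation identifies this kernel with $L_{d-1,d-1}\oo(S/\mf{m}^d)^{\vee}$, giving $I(d,r)=\mf{m}^d$. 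Part (2) in the range $d\geq q+r$ is immediate: by Theorem~\ref{thm:recursive-Fdr}(2) the whole sequence \eqref{eq:ses-Fdrs-s-r=1} is the twist by $L_{q,q}(-q)$ of the sequence at parameter $(d-q,r)$, so $C(d,r)=C(d-q,r)$. Part (3) in this range is analogous; in the remaining range $q\leq d\leq q+r-1$, apply Theorem~\ref{thm:recursive-Fdr}(3) to decompose $\mc{F}^d_{r+1}$ and $\mc{F}^{d-1}_r$ as direct sums of twisted trivial bundles and verify $I(d,r)=\mf{m}^{d-q}$ by direct computation of $\delta^d_r$, matching $I(d-q,r)=\mf{m}^{d-q}$ from part (1).

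Part (4) for the subcase $q-1\leq d\leq q+r-1$ is analogous: apply Theorem~\ref{thm:recursive-Fdr}(3) to both $\mc{F}^d_r$ and $\mc{F}^d_{r+1}$ in \eqref{eq:ses-Fdrs-s-r=1} and verify $J(d,r)=\mf{m}^{r+q-1-d}$ by inspection of $\gamma^d_r$. For the harder range $r\leq d<q-1$, one applies Theorem~\ref{thm:recursive-Fdr}(4) to $\mc{F}^d_r$ and $\mc{F}^d_{r+1}$ (in the same spirit as part (5) below), and the identity $J(d,r)=I(q+r-1-d,r)$ is then deduced by comparing the recursive formulas of (5) at matching parameters, inducting on $d$.

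The main technical work is part (5). Apply Theorem~\ref{thm:recursive-Fdr}(4) to decompose each of $\mc{F}^d_r$, $\mc{F}^d_{r+1}$, and $\mc{F}^{d-1}_r$ as a sum of two $T$-equivariant summands, and analyze the long exact sequences coming from \eqref{eq:ses-Fdrs-r=1} and \eqref{eq:ses-Fdrs-s-r=1} summand by summand. The first summand $F^{q'}(H_{a,0})\oo\mc{F}^{d-aq'}_{r-aq'}$ contributes, via the inductive hypothesis applied at the smaller parameters $(d-aq',r-aq')$ (where $r-aq'<q'$, so the hypotheses of the theorem are satisfied with $q'$ in place of $q$), the main terms $F^{q'}(h_a)\cdot[B(d-aq',r-aq')]$ and $F^{q'}(h_a)\cdot[C(d-aq',r-aq')]$, since $[F^{q'}(H_{a,0})]=F^{q'}(h_a)$. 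The second summand $F^{q'}(H_{a-1,0})\oo\mc{F}^{d-r+q'}_{(a+1)q'-r}(r-aq')$ falls in the range where Theorem~\ref{thm:recursive-Fdr}(3) (equivalently, Corollary~\ref{cor:rec-Fdr-sometimes}) applies to the inner sheaf, and its cohomological contribution can be expressed in terms of $C(d-r-1+q',(a+1)q'-r-1)^{\vee}$ (in the $B$-recursion) or $B(d-r+q',(a+1)q'-r-1)^{\vee}$ (in the $C$-recursion); the prefactor $(z_1z_2)^{q'-1}$ records the degree shift coming from the twist $F^{q'}(H_{a-1,0})$ combined with the inner twist $\mc{O}_{\PP^1}(r-aq')$, and the dualization comes from identifying an $H^1$ piece with the dual of a kernel via the second short exact sequence. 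The correction $[S/\mf{m}^{[aq']}]$ arises from the trivial summand $\mc{W}^d_{r+1}$ of Lemma~\ref{lem:triv-summand-Fdr} (extended to the appropriate range), which captures the part of $M_{d,\bullet}$ killed by $\omega^{(a+1)q'}$ but not $\omega^{aq'}$ --- that is, the $S$-quotient by the Frobenius power $(x_1^{aq'},x_2^{aq'})=\mf{m}^{[aq']}$.

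The hardest step is verifying that the direct sum decomposition in Theorem~\ref{thm:recursive-Fdr}(4) is compatible with the two short exact sequences \eqref{eq:ses-Fdrs-r=1} and \eqref{eq:ses-Fdrs-s-r=1}, namely that the connecting maps respect the splittings of source and target into two summands, up to contributions that precisely produce the correction $[S/\mf{m}^{[aq']}]$. This requires careful tracking of the differentiation-based embeddings $\alpha$, $\alpha'$, $\beta$, $\beta'$ and the twisted Euler operator $E^{(q')}$ constructed in the proof of Theorem~\ref{thm:recursive-Fdr}(4), their interaction with multiplication by $\omega$ on the local cohomology module $M$ from Section~\ref{sec:prelim}, and bookkeeping of the $T$-equivariant degree shifts; the structure of Lemma~\ref{lem:twisted-Euler} (which describes how $E^{(q')}$ commutes with Frobenius-twisted differentiations up to a correction) will be the crucial input.
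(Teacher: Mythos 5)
Your overall architecture (the two exact sequences \eqref{eq:ses-Fdrs-r=1}, \eqref{eq:ses-Fdrs-s-r=1}, the definitions \eqref{eq:def-Idr-Bdr}, \eqref{eq:def-Jdr-Cdr}, and Theorem~\ref{thm:recursive-Fdr} as the input) matches the paper, and parts (1)--(4) as you describe them would go through. But for part (5) --- the heart of the theorem --- your plan rests on a step you explicitly flag as ``the hardest'' and do not carry out: that the connecting/boundary maps in the long exact sequences are compatible with the two-summand decompositions of $\mc{F}^d_{r+1}$ and $\mc{F}^{d-1}_r$ from Theorem~\ref{thm:recursive-Fdr}(4), summand to summand, up to a controlled correction. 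This is a genuine gap, not a routine verification: the splittings in Theorem~\ref{thm:recursive-Fdr}(4) are obtained by splitting the non-canonical extensions \eqref{eq:ses-Fbardr} and \eqref{eq:case4-Frd-seq}, so there is no reason a priori that the surjection $\mc{F}^d_{r+1}\to\mc{F}^{d-1}_r(1)$ or the inclusion $\mc{F}^d_r\hra\mc{F}^d_{r+1}$ carries one chosen summand into the corresponding one, and establishing this would require redoing much of the analysis of Section~3 at the level of maps. Your attribution of the correction term $[S/\mf{m}^{[aq']}]$ to the trivial summand $\mc{W}^d_{r+1}$ of Lemma~\ref{lem:triv-summand-Fdr} is likewise a heuristic rather than an argument (and that lemma only covers $d\leq(a+1)q'-2$, not the full range $r\leq d<q$).

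The missing idea that makes all of this unnecessary is a rigidity observation: $\op{Im}(\delta^d_r)$ is a finitely generated $T$-equivariant submodule of $L_{d,d}\oo H^1_{\mf{m}}(S)$, and $\coker(\gamma^d_r)$ is a $T$-equivariant cyclic quotient of $L_{d-r,d-r}\oo S$; since every bigraded piece of $S$ and of $H^1_{\mf{m}}(S)$ is at most one-dimensional, both are \emph{uniquely determined by their characters}. Hence the four-term exact sequences give
\[
(z_1z_2)^{d-1}[B(d,r)^{\vee}] = (z_1z_2)^{d}[S] - [F^d_{r+1}] + [F^{d-1}_r],\qquad
(z_1z_2)^{d-r}[C(d,r)] = [F^d_{r}] - [F^{d}_{r+1}] + (z_1z_2)^{d-r}[S],
\]
and part (5) reduces to substituting the character formulas from Theorem~\ref{thm:recursive-Fdr}(4) and rearranging: the second summand's contribution regroups (using $d-r-1+q'-((a+1)q'-r-1)=d-aq'$) exactly into $(z_1z_2)^{d-aq'}[C(d-r-1+q',(a+1)q'-r-1)]$, the leftover $(z_1z_2)^{d-aq'}\left(F^{q'}(h_{a-1})-F^{q'}(h_a)+(z_1z_2)^{aq'}\right)[S]$ is $(z_1z_2)^{d-1}[S/\mf{m}^{[aq']}]^{\vee}$ by \eqref{eq:dual-S-mod-Frob-power}, and the dual on the $C$-term appears only when one finally dualizes $[B(d,r)^{\vee}]$. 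No map-level compatibility is ever needed. To repair your write-up, replace the summand-by-summand analysis of the connecting maps with this Euler-characteristic computation.
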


The only unexplained notation in the theorem above is $\mf{m}^{[aq']}$ which stands for the (generalized) Frobenius power of $\mf{m}$: with our convention, $a<p$, and
\[\mf{m}^{[aq']} = \left(\mf{m}^a\right)^{[q']} = \langle f^{q'} : f\in \mf{m}^a\rangle. \]
We summarize some elementary character formulas that will be used in the proofs below. Using the fact that the minimal free resolution of $S/\mf{m}^{d}$ is given by (see for instance \cite{rai-surv}*{Theorem~2.2})
\[ 0 \lra H_{d,1}\oo S \lra H_{d,0}\oo S\lra S \lra S/\mf{m}^{d} \lra 0,\]
it follows by taking Euler characteristic that
\begin{equation}\label{eq:char-S-mod-md}
   [S/\mf{m}^{d}] = \frac{1-h_d+z_1z_2h_{d-1}}{(1-z_1)(1-z_2)}.
\end{equation}
Using the flatness of Frobenius, we get similarly a resolution
\[ 0 \lra F^{q'}(H_{a,1})\oo S \lra F^{q'}(H_{a,0})\oo S\lra S \lra S/\mf{m}^{[aq']} \lra 0,\]
which then implies
\[   [S/\mf{m}^{[aq']}] = \frac{1-F^{q'}(h_a)+(z_1z_2)^{q'} F^{q'}(h_{a-1})}{(1-z_1)(1-z_2)}.
\]
Using the identities
\begin{equation}\label{eq:ha-Fqha-dual}
 h_a^{\vee}=(z_1z_2)^{-a}\cdot h_a,\quad F^{q'}(h_a^{\vee}) = (z_1z_2)^{-aq'}\cdot F^{q'}(h_a)
\end{equation}
it follow that
\begin{equation}\label{eq:dual-S-mod-Frob-power}
   (z_1z_2)^{aq'-1}[S/\mf{m}^{[aq']}]^{\vee} = \frac{(z_1z_2)^{-1}\left((z_1z_2)^{aq'}-F^{q'}(h_a)+F^{q'}(h_{a-1})\right)}{(1-z_1^{-1})(1-z_2^{-1})} = \frac{(z_1z_2)^{aq'}-F^{q'}(h_a)+F^{q'}(h_{a-1})}{(1-z_1)(1-z_2)}.
\end{equation}

\begin{remark}\label{rem:periodicity-Jdr}
    It follows from part (2) of Theorem~\ref{thm:recursions-Idr-Jdr} that for $r<q$ and $d\geq r$, the ideal $J(d,r)$ depends only on the residue class of $d$ modulo $q$. It will then be convenient to extend the definition of $J(d,r)$ to $d<r$ by using this periodicity: if $d<r<q$ then we define $J(d,r) = J(\tilde{d},r)$ where $\tilde{d}$ is any positive integer satisfying $\tilde{d}\geq r$ and $q|\tilde{d}-d$. We note also that for this definition it is not necessary to assume that $q'\leq r<q$ (if $r<q'$ then $J(d,r)$ is in fact $q'$-periodic in $d$).
\end{remark}

\begin{proof}[Proof of Theorem~\ref{thm:recursions-Idr-Jdr}]
    For part (1), the hypothesis $d\leq r$ implies $F^d_{r+1} = H_{d,0}\oo S$, $F^{d-1}_r=H_{d-1,0}\oo S$, hence \eqref{eq:def-Idr-Bdr} implies
\[
    \begin{aligned}
{[L_{d-1,d-1} \oo B(d,r)^{\vee}]} &= [L_{d,d}\oo S] - [H_{d,0}\oo S] + [H_{d-1,0}\oo S] \\
&= \frac{(z_1z_2)^d-h_d+h_{d-1}}{(1-z_1)(1-z_2)}
    \end{aligned}
\]
    Dividing by $[L_{d-1,d-1}]=(z_1z_2)^{d-1}$, taking duals, and using \eqref{eq:ha-Fqha-dual} we get
    \[[B_{d,r}] = \frac{(z_1z_2)^{-1}(1-h_{d}+z_1z_2 h_{d-1})}{(1-z_1^{-1})(1-z_2^{-1})} = \frac{1-h_{d}+z_1z_2 h_{d-1}}{(1-z_1)(1-z_2)}.\]
Using \eqref{eq:char-S-mod-md}, we conclude that $[B(d,r)] = [S/\mf{m}^{d}]$, hence $I(d,r)=\mf{m}^{d}$.

    For part (2), we note that $q'\leq r,r+1\leq q$, hence we can apply Theorem~\ref{thm:recursive-Fdr}(2) to conclude that
    \[ F^d_r = L_{q,q} \oo F^{d-q}_r\quad\text{ and }F^d_{r+1} = L_{q,q} \oo F^{d-q}_{r+1} \quad\text{ if }d\geq q+r.\]
    It follows from \eqref{eq:def-Jdr-Cdr} that
    \[ 
    \begin{aligned}
    (z_1z_2)^{d-r}[C(d,r)] &= [L_{d-r,d-r}\oo C(d,r)] = [F^d_r] - [F^d_{r+1}] + [L_{d-r,d-r}\oo S] \\
    &= (z_1z_2)^q\cdot\left([F^{d-q}_r] - [F^{d-q}_{r+1}] + [L_{d-q-r,d-q-r}\oo S]\right) = (z_1z_2)^q \cdot (z_1z_2)^{d-q-r}[C(d-q,r)]
    \end{aligned}\]
    This implies that $C(d,r)=C(d-q,r)$ and hence $J(d,r)=J(d-q,r)$, as desired.

    For part (3), the same reasoning as in the previous paragraph implies that $I(d,r)=I(d-q,r)$ for $d\geq q+r$, so we may restrict to the case $q\leq d\leq q+r-1$. We can then apply Theorem~\ref{thm:recursive-Fdr}(3) to conclude that
    \[
    \begin{aligned}
{[L_{d-1,d-1} \oo B(d,r)^{\vee}]} &= [L_{d,d}\oo S] - [F^d_{r+1}] + [F^{d-1}_r] \\
&= \left([L_{d,d}]-[H_{d,q}]-[H_{q-1,d-r}] + [H_{d-1,q}]+[H_{q-1,d-r}] \right)\cdot[S] \\
&= \left([L_{d,d}]-[H_{d,q}] + [H_{d-1,q}] \right)\cdot[S] \\
&= \frac{(z_1z_2)^d-(z_1z_2)^q\cdot h_{d-q}+(z_1z_2)^q\cdot h_{d-q-1}}{(1-z_1)(1-z_2)}
    \end{aligned}
    \]
Arguing as in part (1), it follows that
    \[[B(d,r)] = \frac{(z_1z_2)^{-1}(1-h_{d-q}+z_1z_2 h_{d-q-1})}{(1-z_1^{-1})(1-z_2^{-1})} =[S/\mf{m}^{d-q}],\] 
hence $I(d,r)=\mf{m}^{d-q}$. Using part (1) together with the fact that $0\leq d-q<r$, we also get $I(d-q,r)=\mf{m}^{d-q}$, hence $I(d,r)=I(d-q,r)$, as desired. When $d=q+r$, we have $I(d,r)=I(r,r)=\mf{m}^r$ by part (1).

We first prove conclusion (4) in the case $q\leq d\leq q+r-1$. As in part (3), we can apply Theorem~\ref{thm:recursive-Fdr}(3) to conclude that
\[
\begin{aligned}
(z_1z_2)^{d-r}\cdot[C(d,r)] &= [F^d_r] - [F^d_{r+1}] + [L_{d-r,d-r}\oo S] \\
&= (z_1z_2)^{d-r}\cdot\left([H_{q+r-1-d,1}\oo S] - [H_{q+r-1-d,0}\oo S] + [S] \right) = (z_1z_2)^{d-r}\cdot[S/\mf{m}^{q+r-1-d}].
\end{aligned}
\]
This shows that $J(d,r)=\mf{m}^{q+r-1-d}$, which in turn equals $I(q+r-1-d,r)$ by part (1), since $q+r-1-d\leq r$.

To prove the remaining part of (4), as well as (5), we may then assume that $r\leq d\leq q-1$. We first prove~(5), which we then use to verify~(4). We have using \eqref{eq:def-Idr-Bdr}, \eqref{eq:def-Jdr-Cdr} and Theorem~\ref{thm:recursive-Fdr} that
\[
\begin{aligned}
(z_1z_2)^{d-1}\cdot [B(d,r)^{\vee}] =& (z_1z_2)^{d}\cdot[S] - [F^d_{r+1}] + [F^{d-1}_r] \\
=& F^{q'}(h_a)\cdot\left((z_1z_2)^{d-aq'}\cdot[S]-[F^{d-aq'}_{r+1-aq'}] + [F^{d-1-aq'}_{r-aq'}]\right) - \\
& F^{q'}(h_{a-1})\cdot\left((z_1z_2)^{d-aq'}\cdot[S]-[F^{d-r-1+q'}_{(a+1)q'-r}] + [F^{d-r-1+q'}_{(a+1)q'-r-1}]\right) + \\
& (z_1z_2)^{d-aq'}\cdot\left(F^{q'}(h_{a-1})-F^{q'}(h_a) + (z_1z_2)^{aq'}\right) \cdot [S] \\
\overset{\eqref{eq:dual-S-mod-Frob-power}}{=}& F^{q'}(h_a)\cdot(z_1z_2)^{d-aq'-1}[B(d-aq',r-aq')^{\vee}] - \\
& F^{q'}(h_{a-1})\cdot(z_1z_2)^{d-aq'}[C(d-r-1+q',(a+1)q'-r-1)] + \\
&(z_1z_2)^{d-1}\cdot[S/\mf{m}^{[aq']}]^{\vee}
\end{aligned}
\]
Dividing by $(z_1z_2)^{d-1}$, taking duals, and using \eqref{eq:ha-Fqha-dual} gives the desired formula for $[B(d,r)]$. The one for $[C(d,r)]$ follows using similar reasoning.

To finish part (4) of the theorem, we let $\tilde{d}=q+r-1-d$ and argue by induction. Using the fact that $r-aq'<q'$ we can apply the periodicity statements (2), (3) (with $q$ replaced by $q'$) and induction to conclude
\[ C(\tilde{d}-aq',r-aq') = B(q'+r-1-\tilde{d},r-aq') = B(d-(p-1)q',r-aq') = B(d-aq',r-aq').\]
Similarly, since $(a+1)q'-r-1 < q'$ we get 
\[ B(\tilde{d}-r+q',(a+1)q'-r-1) = C(d-r-1+q',(a+1)q'-r-1).\]
It follows from (5) that $B(d,r) = C(\tilde{d},r)$, hence $I(d,r)=J(\tilde{d},r)$, concluding the inductive step.
\end{proof}

Notice that by \eqref{eq:def-Idr-Bdr}, \eqref{eq:def-Jdr-Cdr} we have
\begin{equation}\label{eq:Idr-Jdr-r=0} 
 B(d,0) = C(d,0) = 0 \quad\text{and}\quad I(d,0) = J(d,0) = S.
\end{equation}
Moreover, it follows from Theorem~\ref{thm:recursions-Idr-Jdr}(5) (with $q'=1$, $q=p$, $a=r$), that
\begin{equation}\label{eq:Idr-Jdr-smallr} 
I(d,r) = J(d,r) = \mf{m}^r \quad\text{for }1\leq r\leq d<p.
\end{equation}

Our interest in Theorem~\ref{thm:recursions-Idr-Jdr} comes from the following simple consequence, that will be crucial for the results in Section~\ref{sec:Han-Monsky}. We note that by restricting the $T$-action to the diagonal subtorus, every $T$-equivariant module $M$ will inherit a $\bb{Z}$-grading, where $M_{u,v}$ lies in degree $u+v$. For a monomial ideal $I\subset S$, this yields the standard $\bb{Z}$-grading.

\begin{corollary}\label{cor:quot-Jdrs}
    If $p\nmid r$ then 
    \[\left(\frac{J(d-1,r-1)}{J(d,r)}\right)_i = 0\text{ for }i\neq r-1.\]
\end{corollary}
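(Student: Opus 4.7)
The plan is to prove the corollary by induction on $r$, simultaneously with a companion statement for the ideals $I(d,r)$. Specifically, I establish the joint claim:
\begin{itemize}
\item[(A)] for any $d$ and $r\geq 1$ with $p\nmid r$, $J(d,r)\subseteq J(d-1,r-1)$, and the quotient is concentrated in degree $r-1$;
\item[(B)] for any $d$ and $r\geq 1$ with $p\nmid r$, $I(d,r)\subseteq I(d,r-1)$, and the quotient is concentrated in degree $r-1$.
\end{itemize}
The corollary is statement (A); the joint formulation is forced because the recursion in Theorem~\ref{thm:recursions-Idr-Jdr}(5) intertwines $B$-terms with $C$-terms, so an induction restricted to $C$ alone cannot close.

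Both inclusions come from a $3\times 3$ diagram. I take as rows \eqref{eq:ses-Fdrs-s-r=1} for $(d,r)$ and its $(1)$-twisted counterpart for $(d-1,r-1)$ -- whose right-hand cokernels both equal $L_{d-r,d-r}(2r-d)$ -- with vertical surjections from \eqref{eq:ses-Fdrs-r=1} applied to $\mc{F}^d_r\onto\mc{F}^{d-1}_{r-1}(1)$ and $\mc{F}^d_{r+1}\onto\mc{F}^{d-1}_r(1)$. Applying $H^0_*$ and using the identity on the right column gives $\op{Im}(\gamma^d_r)\subseteq\op{Im}(\gamma^{d-1}_{r-1}(1))$, hence $J(d,r)\subseteq J(d-1,r-1)$; a symmetric assembly swapping the roles of the two exact sequences produces $I(d,r)\subseteq I(d,r-1)$. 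Under these inclusions, $[J(d-1,r-1)/J(d,r)]=[C(d,r)]-[C(d-1,r-1)]$ and $[I(d,r-1)/I(d,r)]=[B(d,r)]-[B(d,r-1)]$, so it suffices to track the supports of these differences.

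The base case $1\leq r<p$ follows from \eqref{eq:Idr-Jdr-smallr} together with the periodicity in Theorem~\ref{thm:recursions-Idr-Jdr}(2),(3) and Remark~\ref{rem:periodicity-Jdr}: both quotients reduce to either $0$ or $\mf{m}^{r-1}/\mf{m}^r$. For the inductive step at $r\geq p$, write $q=p^e$, $q'=p^{e-1}$ with $q'\leq r<q$ and $aq'\leq r<(a+1)q'$, $1\leq a\leq p-1$. Since $p\mid q'$ (as $e\geq 2$) and $p\nmid r$, both $r':=r-aq'$ and $s+1:=(a+1)q'-r$ are positive and coprime to $p$, and a direct check gives $r', s+1\leq r-1$, so the inductive hypotheses apply. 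Because both $r$ and $r-1$ lie in $[aq',(a+1)q')$, Theorem~\ref{thm:recursions-Idr-Jdr}(5) applies with the same $a$ to each, and subtracting yields
\[
\begin{aligned}
[C(d,r)] - [C(d-1,r-1)] &= F^{q'}(h_a)\cdot\bigl([C(d-aq',r')] - [C(d-1-aq',r'-1)]\bigr) \\
&\phantom{=} + F^{q'}(h_{a-1})(z_1z_2)^{q'-1}\cdot\bigl([B(\tilde d,s+1)]^{\vee} - [B(\tilde d,s)]^{\vee}\bigr),
\end{aligned}
\]
with $\tilde d=d-r+q'$ and $s=(a+1)q'-r-1$. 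By (A) at $r'$ the first parenthetical sits in degree $r'-1$, and multiplication by $F^{q'}(h_a)$ (pure degree $aq'$) lifts it to degree $r-1$. By (B) at $s+1$ the difference $[B(\tilde d,s+1)] - [B(\tilde d,s)]$ sits in degree $s$, its dual in degree $-s$, and multiplication by the factor of total degree $(a-1)q'+2(q'-1)$ lands in degree $-s + (a-1)q' + 2(q'-1) = r-1$ via the cancellation $(a-1)q' + 2q' - (a+1)q' = 0$. This proves (A); (B) follows from the identical manipulation of the recursion for $[B(d,r)] - [B(d,r-1)]$, with the roles of $B$ and $C$ interchanged in the second term.

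The main obstacle is the degree arithmetic and sign tracking through the dualizations, together with verifying the periodicity cases in the base step; once the inclusions and joint formulation are in place, the inductive step reduces to the character computation above and its $B$-analogue.
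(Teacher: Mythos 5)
Your argument is essentially the paper's: both proofs induct on $r$ by subtracting the two instances of Theorem~\ref{thm:recursions-Idr-Jdr}(5) for $(d,r)$ and $(d-1,r-1)$ and checking that the degree shifts $aq'$ and $(a-1)q'+2(q'-1)$ place both resulting terms in degree $r-1$; your degree arithmetic and the verification that $p\nmid r'$, $p\nmid (a+1)q'-r$ are correct. The one genuine difference is organizational: you run a joint induction on the $I$- and $J$-quotients, whereas the paper stays entirely within $J$-quotients by invoking the reflection $J(d,r)=I(q+r-1-d,r)$ of Theorem~\ref{thm:recursions-Idr-Jdr}(4) to convert the dualized $B$-difference into another $J$-quotient (its equation \eqref{eq:quot-consec-Jdr}). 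So your assertion that ``an induction restricted to $C$ alone cannot close'' is not accurate -- part (4) is exactly the device that closes it -- but your joint formulation is a legitimate substitute and the two inductions carry the same content.

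There is, however, one missing case in your inductive step. Theorem~\ref{thm:recursions-Idr-Jdr}(5) applies only when $r\le d<q$, and you invoke it without first reducing $d$. Even after using the periodicity of part (2) and Remark~\ref{rem:periodicity-Jdr} to assume $r\le d\le q+r-1$ (which you mention only for the base case), the window $q\le d\le q+r-1$ is not covered by part (5) and your subtraction formula does not apply there. The fix is short -- in that range part (4) gives $J(d,r)=\mf{m}^{q+r-1-d}=J(d-1,r-1)$, so the quotient vanishes outright -- but as written your proof silently assumes $d<q$, and the analogous reduction must also be supplied for the parameters $(\tilde d,s+1)$ and $(d',r')$ fed into the inductive hypotheses. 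With that case added (and the $d<r$ convention of Remark~\ref{rem:periodicity-Jdr} acknowledged), the argument is complete.
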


\begin{proof}
    We let $q',q$ are as in Theorem~\ref{thm:recursions-Idr-Jdr}, and we may assume using periodicity (Theorem~\ref{thm:recursions-Idr-Jdr}(2), Remark~\ref{rem:periodicity-Jdr}) that $r\leq d\leq q+r-1$. Consider first the case $r=1$, where we have by \eqref{eq:Idr-Jdr-smallr} and Theorem~\ref{thm:recursions-Idr-Jdr}(4) that
    \[ J(d,1) = \mf{m}\quad\text{for }1\leq d<p,\quad J(d,p) = S.\]
    It follows from \eqref{eq:Idr-Jdr-r=0} that
    \begin{equation}\label{eq:quot-Jdr-r=1}
    \frac{J(d-1,0)}{J(d,1)} = \begin{cases}
        S/\mf{m} & \text{if }1\leq d<p,\\
        0 & \text{if }d=p,
    \end{cases}
    \end{equation}
    which vanishes in degree $i\neq 0$.

    If $r>1$, then using the hypothesis $p\nmid r$ we conclude that $q'\leq r-1,r<q$. If $d\geq q$ then it follows from Theorem~\ref{thm:recursions-Idr-Jdr}(4) that $J(d-1,r-1)=J(d,r)=\mf{m}^{r+q-1-d}$, so the conclusion follows. We are therefore left with the case $r\leq d<q$. If $q'=1$ (so that $q=p$) then we have by \eqref{eq:Idr-Jdr-smallr}
    \[ \frac{J(d-1,r-1)}{J(d,r)} = \frac{\mf{m}^{r-1}}{\mf{m}^r},\]
    which is concentrated in degree $r-1$. We may therefore assume that $q'>1$, and in particular $p|q'$, and we apply Theorem~\ref{thm:recursions-Idr-Jdr}(3)--(5) and Remark~\ref{rem:periodicity-Jdr}. If we define
    \[d'=d-aq',\quad r'=r-aq',\quad d'' = (a+1)q'-1-d,\quad r'' = (a+1)q'-r\]
    then we get
    \begin{equation}\label{eq:quot-consec-Jdr}
    \left[\frac{J(d-1,r-1)}{J(d,r)}\right] = F^{q'}(h_a)\cdot \left[\frac{J(d'-1,r'-1)}{J(d',r')}\right] + F^{q'}(h_{a-1})\cdot(z_1z_2)^{q'-1}\cdot \left[\frac{J(d''-1,r''-1)}{J(d'',r'')}\right]^{\vee}
    \end{equation}
    Since $p\nmid r$ and $p|q'$, it follows that $p\nmid r',r''$. Using the fact that
    \[q'a+(r'-1) = r-1 = q'(a-1)+2(q'-1)-(r''-1)\]
    it follows by induction that the terms on the right side of \eqref{eq:quot-consec-Jdr} vanish in degrees $\neq r-1$, as desired.
\end{proof}

We end this section with an explicit computation of the quotients in Corollary~\ref{cor:quot-Jdrs} when $p=2$ (and $r$ odd). To do so, we define the \defi{no-carries set}
\[ NC = \left\{ (a,b) \in \bb{Z}^2_{\geq 0} : {a+b\choose a}\text{ is odd}\right\}\]
If $a=\sum 2^{a_i}$, $b = \sum 2^{b_j}$, are the $2$-adic expansions of $a,b$, then $(a,b)\in NC$ if and only if $a_i\neq b_j$ for all $i,j$, that is $a$ and $b$ do not share a digit equal to $1$, or equivalently there is no carry when calculating the sum $a+b$ in base $2$.

\begin{lemma}\label{lem:quot-Jdr-char2}
    If $\op{char}(\kk)=2$, $r=1 + 2^{r_1} + \cdots + 2^{r_k}$ ($1\leq r_1<r_2<\cdots<r_k$) is the base $2$ expansion of $r$,~then
    \[
\left[\frac{J(d-1,r-1)}{J(d,r)}\right] = \begin{cases}
    \prod_{j=1}^k F^{2^{r_j}}(h_1) & \text{if }(r,d-r)\in NC \\
    0 & \text{otherwise}.
    \end{cases}    
\]
\end{lemma}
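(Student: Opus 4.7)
The approach is induction on the odd integer $r$, applied to the recursion formula derived in the proof of Corollary~\ref{cor:quot-Jdrs}. In characteristic $2$ (so that $a=1$) the recursion reads
\[
\left[\frac{J(d-1,r-1)}{J(d,r)}\right] = F^{q'}(h_1)\cdot \left[\frac{J(d'-1,r'-1)}{J(d',r')}\right] + (z_1z_2)^{q'-1}\cdot \left[\frac{J(d''-1,r''-1)}{J(d'',r'')}\right]^{\vee},
\]
valid for $r \leq d < q$, where $q' = 2^{r_k}$, $q = 2^{r_k+1}$, $d'=d-q'$, $r'=r-q'=1+2^{r_1}+\cdots+2^{r_{k-1}}$, $d''=2q'-1-d$, and $r''=2q'-r$. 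Both $r'$ and $r''$ are odd and strictly smaller than $r$, so the inductive hypothesis applies to each summand. Both sides of the claimed identity are $q$-periodic in $d$ for $d \geq r$, so by Theorem~\ref{thm:recursions-Idr-Jdr}(2) and Remark~\ref{rem:periodicity-Jdr} it suffices to treat $r \leq d \leq q+r-1$.

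The base case $r=1$ reduces to \eqref{eq:quot-Jdr-r=1}: the empty product equals $1$, $(1, d-1) \in NC$ iff $d$ is odd, and the character is $[S/\mathfrak{m}]=1$ or $0$ accordingly. For $q \leq d \leq q+r-1$ Theorem~\ref{thm:recursions-Idr-Jdr}(4) gives $J(d,r) = J(d-1, r-1) = \mathfrak{m}^{r+q-1-d}$, so the quotient vanishes; and in this range $c = d-r \geq q-r = r''$, whereas $(r,c) \in NC$ would force $c$ to be a sum of bits from $[0,r_k] \setminus \{0, r_1, \ldots, r_k\}$, hence $c \leq r'' - 1$. So both sides vanish.

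The main step is $r \leq d < q$, i.e., $c = d-r \in [0, r''-1]$. Since $r > q'$ implies $r'' < q'$, one has $c < q' = 2^{r_k}$, so bit $r_k$ of $c$ is zero. This gives the crucial equivalence $(r,c) \in NC \iff (r',c) \in NC$: the only bit of $r$ absent from $r'$, namely $r_k$, is vacuously absent from $c$. By the inductive hypothesis the first summand equals $F^{q'}(h_1)\cdot \prod_{j<k} F^{2^{r_j}}(h_1) = \prod_{j=1}^{k} F^{2^{r_j}}(h_1)$ when $(r,c) \in NC$, and vanishes otherwise.

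It remains to show that the second summand always vanishes in this range. Interpreting $J(d'',r'')$ for $d''<r''$ via the periodicity of Remark~\ref{rem:periodicity-Jdr} and applying induction to $r''$, the second summand is nonzero only when $(r'', d''-r'') \in NC$. Since $d''-r'' = -1-c$ and the bits of $-1-c$ in $[0,r_k-1]$ (considered modulo $2^{r_k}$) are the bit-complement of those of $c$, the NC condition translates to: every bit of $r''$ is a bit of $c$. The bits of $r''$ being exactly $[0,r_k-1] \setminus \{r_1,\ldots,r_{k-1}\}$ and summing to $r''$, this forces $c \geq r''$, contradicting $c < r''$. Hence the second summand is identically zero, and the two cases combine to match the lemma's formula. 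The main technical obstacle is precisely this bit-complementation and periodicity analysis of the second summand; once it is pinned down, the dichotomy collapses to the single inequality $c < r''$, and the induction closes cleanly.
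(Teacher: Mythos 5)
Your proof is correct and follows essentially the same route as the paper: induction on $r$ via the recursion \eqref{eq:quot-consec-Jdr-char2}, the base case \eqref{eq:quot-Jdr-r=1}, reduction modulo the period $q$, Theorem~\ref{thm:recursions-Idr-Jdr}(4) for $q\leq d\leq q+r-1$, and the observation that only the first summand survives. The one place you deviate — showing the second summand vanishes uniformly by shifting $d''$ up by the period and running the bit-complement computation, rather than splitting into the NC/non-NC cases as the paper does — is a valid (and arguably more careful) handling of the fact that $d''<r''$ falls outside the natural range of the inductive hypothesis.
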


\begin{proof}
 We argue by induction on $r$. If $r=1$ (that is, $k=0$), then $(1,d-1)\in NC$ if and only if $d$ is odd, and using \eqref{eq:quot-Jdr-r=1} and the fact that $J(d-1,0)$, $J(d,1)$ only depend on the residue class of $d$ modulo~$2$, we get
 \[\left[\frac{J(d-1,0)}{J(d,1)}\right] = \begin{cases}
     1=[S/\mf{m}] & \text{if }(1,d-1) \in NC, \\
     0 & \text{otherwise.}
 \end{cases} \]
For the induction step, we assume $k>0$ and let $q'=2^{r_k}$, $q=2q'$, so that $q'\leq r<q$. 

If $(r,d-r)\in NC$ then each of the terms $1,2^{r_1},\cdots,2^{r_k}$ appears in the $2$-adic expansion of $d$. In particular $d$ is odd, and after possibly replacing $d$ with its residue class modulo $q$ we may assume that $r\leq d < q$. We can then apply \eqref{eq:quot-consec-Jdr} to obtain
\begin{equation}\label{eq:quot-consec-Jdr-char2}
\left[\frac{J(d-1,r-1)}{J(d,r)}\right] = F^{q'}(h_1)\cdot \left[\frac{J(d'-1,r'-1)}{J(d',r')}\right] + (z_1z_2)^{q'-1}\cdot \left[\frac{J(d''-1,r''-1)}{J(d'',r'')}\right]^{\vee},
\end{equation}
where $d'=d-q'$, $r'=r-q'$, $d''=q-1-d$ and $r''=q-r$. Since $d''$ is even and $r''$ is odd, it follows that $(r'',d''-r'')\notin NC$, hence by induction we have $J(d''-1,r''-1)/J(d'',r'')=0$. Moreover $(r,d-r)\in NC$ implies $(r',d'-r')\in NC$, hence by induction we get
\[\left[\frac{J(d-1,r-1)}{J(d,r)}\right] = F^{q'}(h_1)\cdot \left[\frac{J(d'-1,r'-1)}{J(d',r')}\right] = F^{2^{r_k}}(h_1) \cdot \prod_{j=1}^{k-1}F^{2^{r_j}}(h_1) = \prod_{j=1}^{k}F^{2^{r_j}}(h_1), \]
verifying the inductive step in the case $(r,d-r)\in NC$.

Suppose now that $(r,d-r)\notin NC$, and working modulo $q$ assume that $r\leq d\leq r+q-1$. As in the proof of Corollary~\ref{cor:quot-Jdrs}, if $d\geq q$ then it follows from Theorem~\ref{thm:recursions-Idr-Jdr}(4) that $J(d-1,r-1)/J(d,r)=0$. Otherwise, we have $q'\leq r\leq d<q$ and we can apply \eqref{eq:quot-consec-Jdr-char2} again. Since $q'=2^{r_k}$ appears as a term in the $2$-adic expansion of both $r$ and $d$, the hypothesis $(r,d-r)\notin NC$ forces $(r',d'-r')\notin NC$. By induction we get $J(d'-1,r'-1)/J(d',r')=0$. Moreover, since $d''<r''$ we have $(r'',d''-r'')\not\in NC$, hence $J(d''-1,r''-1)/J(d'',r'')=0$. Using \eqref{eq:quot-consec-Jdr-char2}, we conclude that $J(d-1,r-1)/J(d,r)=0$, completing the inductive step.
\end{proof}

\section{The graded Han--Monsky representation ring}
\label{sec:Han-Monsky}

We recall the discussion of the graded Han--Monsky representation ring from the introduction. By the graded Nakayama's lemma one gets for $a\leq b$ a collection of positive integers $c_j=c_j(a,b)$, $0\leq j<a$, such that
\begin{equation}\label{eq:dela-delb-general} \delta_a\delta_b=\sum_{j=0}^{a-1}\delta_{c_j}(-j).
\end{equation}
Although important progress (ignoring the grading) has been obtained in \cite{han-monsky} and subsequent work, a complete understanding of the product formula \eqref{eq:dela-delb-general} remains unknown. The following consequence of \cite{han-monsky}*{Theorem~3.6} will be important for us.
\begin{lemma}\label{lem:del-prod-div-by-p}
    If $p|ab$ for some $1\leq a\leq b$ then $p|c_j(a,b)$ for all $j=0,\cdots,a-1$.
\end{lemma}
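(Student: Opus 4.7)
The plan is to deduce the lemma from the ungraded product formula of \cite{han-monsky}*{Theorem~3.6}. The shift $\delta_{c_j}(-j)$ in \eqref{eq:dela-delb-general} only records the degree of the cyclic generator of each indecomposable summand and does not alter the dimension $c_j$, so the multiset $\{c_0(a,b),\ldots,c_{a-1}(a,b)\}$ coincides with the corresponding multiset arising in the ungraded product $\delta_a \cdot \delta_b$ in the original Han--Monsky representation ring. It therefore suffices to prove divisibility by $p$ of each of these dimensions in the ungraded setting.

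By commutativity, $\delta_a \delta_b = \delta_b \delta_a$, so we may assume $p \mid a$ and write $a = p a'$. The content of \cite{han-monsky}*{Theorem~3.6} in precisely this case is a recursive description of $\delta_a \cdot \delta_b$ as a Frobenius-type pullback of a product of smaller $\delta$'s: every indecomposable summand turns out to be of the form $\delta_{pc}$ for some $c$ arising as a summand in a product of strictly smaller total dimension. Since the relevant Frobenius operation multiplies every dimension by $p$, this forces $p \mid c_j$ for all $j$, proving the lemma.

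The only step that requires care is the translation between the formulation of \cite{han-monsky}, which is stated in the ungraded setting and employs slightly different conventions, and the graded setup used here; this is essentially bookkeeping. I do not anticipate any substantive technical obstacle beyond this, and in particular no new input about the graded structure is needed, since the refinement by grading is irrelevant to the divisibility of dimensions.
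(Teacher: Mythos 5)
Your argument matches the paper's: the lemma is stated there as a direct consequence of Han--Monsky's Theorem~3.6 with no further proof supplied, and your reductions (passing from the graded to the ungraded setting, since the shifts do not affect the dimensions of the indecomposable summands, and using commutativity to assume $p\mid a$) followed by the appeal to that theorem is exactly the intended route. Your paraphrase of what Theorem~3.6 says is loose, but since the divisibility of the dimensions of all summands when $p$ divides one factor is precisely what that theorem delivers, nothing essential is missing.
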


The connection to the results from Sections~\ref{sec:pparts-P1},~\ref{sec:coh-Fdr-recursive} arises using the exact sequence
\begin{equation}\label{eq:ses-Fdre} 0 \lra \mc{F}^d_{r}(e) \overset{\alpha}{\lra} \mc{F}^d_{r+1}(e) \oplus \mc{F}^{d-1}_{r-1}(e+1) \overset{\beta}{\lra} \mc{F}^{d-1}_{r}(e+1) \lra 0.
\end{equation}
The multiplication in the Han--Monsky ring can be rephrased in terms of the (failure of the) exactness of \eqref{eq:ses-Fdre} after taking global sections. More precisely, we have the following.

\begin{theorem}\label{thm:pparts-vs-Monsky}
    Consider non-negative integers $a,b,r,j$, and set $d=a+b-1-j$, $e=j-1$. We have
    \[\delta_r(-j)\text{ appears as a summand in }\delta_a\delta_b \Longleftrightarrow \left(\frac{J(d-1,r-1)}{J(d,r)}\right)_{a-1+r-d,b-1+r-d} \neq 0.\]
    The above is further equivalent to the assertion that $H^0(\PP^1,\beta)$ fails to be surjective in bi-degree $(a-1,b-1)$.    
\end{theorem}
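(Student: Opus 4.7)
The plan is to pass to the torus weight $(a_1,a_2):=(a-1,b-1)$ and work with the graded $\kk[T]$-module $A:=M_{(a_1,a_2)}\simeq\delta_a\delta_b$ from Section~\ref{sec:prelim}, where $T=\omega$ acts and the grading is the $e$-degree. By \eqref{eq:multigraded-comps-coh}, the weight-$(a_1,a_2)$ part of $H^i(\mc{F}^{d'}_{r'}(e'))$ is the $i$-th (co)kernel of $T^{r'}\colon A_{e'}\to A_{e'+r'}$, with $e'=a_1+a_2-d'$ forced by the choice of weight. Setting $d=a+b-1-j$ and $e=j-1$ as in the statement, the long exact sequence of \eqref{eq:ses-Fdre} at this weight becomes
\[
\cdots\lra\ker(T^{r+1})_{j-1}\oplus\ker(T^{r-1})_j \overset{H^0(\beta)}{\lra} \ker(T^r)_j \overset{\delta}{\lra} \coker(T^r)_{j+r-1} \overset{H^1(\alpha)}{\lra} \coker(T^{r+1})_{j+r}\oplus\coker(T^{r-1})_{j+r-1}\lra\cdots
\]

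First I would compute $\op{im}(\delta)=\ker(H^1(\alpha))$. The two components of $H^1(\alpha)$ come from $\iota\colon\mc{F}^d_r\hookrightarrow\mc{F}^d_{r+1}$ and $\omega\colon\mc{F}^d_r\to\mc{F}^{d-1}_{r-1}(1)$, and unraveling the identifications above shows that they descend to the natural quotient maps $[x]\mapsto[Tx]$ and $[x]\mapsto[x]$, respectively. An elementary diagram chase (using that $T^r A_{j-1}\subseteq T^{r-1}A_j$) then yields
\[
\op{im}(\delta) \;=\; \frac{\ker T\cap T^{r-1}A_j}{\ker T\cap T^r A_{j-1}} \;\subseteq\; \frac{A_{j+r-1}}{T^r A_{j-1}}.
\]
Writing the Han--Monsky decomposition $A=\bigoplus_k\delta_{c_k}(-j_k)$ and directly enumerating tops of summands, one finds $\dim(\ker T\cap T^{r-1}A_j)=\#\{k:c_k+j_k=r+j,\,j_k\leq j\}$ and $\dim(\ker T\cap T^r A_{j-1})=\#\{k:c_k+j_k=r+j,\,j_k\leq j-1\}$, so $\dim\op{im}(\delta)=\#\{k:(c_k,j_k)=(r,j)\}\in\{0,1\}$. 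This yields the equivalence of the appearance of $\delta_r(-j)$ as a summand with the failure of $H^0(\PP^1,\beta)$ to be surjective in bidegree $(a-1,b-1)$.

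Next, the same recipe applied to the long exact sequences of \eqref{eq:ses-Fdrs-s-r=1} at parameters $(d,r)$ (with twist $e$) and $(d-1,r-1)$ (with twist $e+1$), together with the defining identities \eqref{eq:def-Jdr-Cdr}, identifies at weight $(a_1,a_2)$
\[
L_{d-r,d-r}\oo C(d,r)=\frac{\ker T\cap A_{j+r-1}}{\ker T\cap T^r A_{j-1}}, \qquad L_{d-r,d-r}\oo C(d-1,r-1)=\frac{\ker T\cap A_{j+r-1}}{\ker T\cap T^{r-1} A_j}.
\]
Since $C(d,r)=S/J(d,r)$ and $C(d-1,r-1)=S/J(d-1,r-1)$ are quotients by monomial ideals, each of these pieces has dimension $\leq 1$, and the previous enumeration gives $\dim C(d,r)_{(u,v)}-\dim C(d-1,r-1)_{(u,v)}=\#\{k:(c_k,j_k)=(r,j)\}$ at $(u,v)=(a-1+r-d,\,b-1+r-d)$. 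Hence $\delta_r(-j)$ is a summand precisely when $x_1^u x_2^v\in J(d-1,r-1)\setminus J(d,r)$, i.e.\ when $(J(d-1,r-1)/J(d,r))_{(u,v)}\neq 0$; the containment $J(d,r)\subseteq J(d-1,r-1)$ needed for the quotient to be well defined also follows from this monomial-wise count.

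The main obstacle will be the careful bookkeeping of the three gradings on $A$ (torus weight, $e$-degree, and $d$-degree) and the verification that the induced maps $\iota_*$ and $\omega_*$ really reduce to the natural quotient maps described above; once that dictionary is set up, everything reduces to elementary enumeration of summand-tops in the Han--Monsky decomposition.
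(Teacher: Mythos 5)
Your proposal is correct and follows essentially the same route as the paper: identify the weight-$(a-1,b-1)$ components of the global sections of the $\mc{F}^{d'}_{r'}(e')$ with the colon modules $(0:T^{r'})$ in $A\simeq\delta_a\delta_b$, and then count over the Han--Monsky decomposition. The only cosmetic difference is that you compute $\coker H^0(\beta)$ as $\op{im}(\delta)=\ker H^1(\alpha)$ and identify the bigraded pieces of $C(d,r)$ and $C(d-1,r-1)$ separately, whereas the paper enumerates the cokernel directly and gets the $J$-ideal reformulation from a single $T$-equivariant Euler characteristic; these are equivalent bookkeeping choices.
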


\begin{proof}
    Let $A=\kk[T_1,T_2]/\langle T_1^{a},T_2^{b}\rangle$ viewed as a $\kk[T]$-module via $T\mapsto T_1+T_2$, so its isomorphism class is $\delta_{a}\delta_{b}$. If $d+e=a+b-2$, then we can identify
    \[ (D^d U \oo \Sym^e U)_{(a-1,b-1)} = A_e,\]
    and therefore (see also \eqref{eq:multigraded-comps-coh})
    \[ H^0(\PP^1,\mc{F}^d_r(e))_{(a-1,b-1)} = (0:T^r)_e.\]
    Taking global sections in \eqref{eq:ses-Fdre} and restricting to degree $(a,b)$ we get a left exact sequence
    \[0 \lra (0:T^{r})_e \overset{[\iota\ \cdot T]}{\lra} (0:T^{r+1})_e \oplus (0:T^{r-1})_{e+1} \overset{\begin{bmatrix} \cdot T \\ -\iota\end{bmatrix}}{\lra} (0:T^{r})_{e+1} \]
    where $\iota$ denotes the natural inclusion, and $\cdot T$ is multiplication by $T=T_1+T_2$. Each summand $\delta_c(-j)$ of $\delta_{a}\delta_{b}$ will therefore have a $1$-dimensional contribution to $\left(\coker H^0(\PP^1,\beta)\right)_{(a-1,b-1)}$ precisely when its cyclic generator $m$ is in degree $e+1$ (i.e. $j=e+1$) and $T^{r}m=0$ but $T^{r-1}m\neq 0$, that is, when $c=r$.

    For the relation to the ideals $J(d,r)$, we apply $H^0_*(\PP^1,-)$, take characters and compute the $T$-equivariant Euler characteristic to be
    \[ [F^d_r]-[F^d_{r+1}] - [F^{d-1}_{r-1}] + [F^{d-1}_r] = (z_1z_2)^{d-r}\cdot\left(-[J(d,r)] + [J(d-1,r-1)]\right) = (z_1z_2)^{d-r}\cdot\left[\frac{J(d-1,r-1)}{J(d,r)}\right]\]
    The monomial $z_1^{a-1}z_2^{b-1}$ appears in the above character if and only if the quotient $J(d-1,r-1)/J(d,r)$ is non-zero in bi-degree $(a-1+r-d,b-1+r-d)$, concluding our proof.
\end{proof}

Based on Theorem~\ref{thm:pparts-vs-Monsky}, we can derive some important consequences that will be crucial to our cohomology computations in the next section.

\begin{lemma}\label{lem:dela-delb-ineq}
    If $p\nmid c$ and $\delta_c(-j)$ is a summand of $\delta_a\delta_b$ then 
    \begin{equation}\label{eq:c+2j<a+b} 
     c+2j = a+b-1.
    \end{equation}
\end{lemma}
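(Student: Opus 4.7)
The plan is to combine Theorem~\ref{thm:pparts-vs-Monsky} with Corollary~\ref{cor:quot-Jdrs} in a direct degree count. Suppose $\delta_c(-j)$ is a summand of $\delta_a \delta_b$. Set $d = a+b-1-j$ and $r = c$, which is the setup in Theorem~\ref{thm:pparts-vs-Monsky}. That theorem then gives a non-vanishing bigraded component
\[
\left(\frac{J(d-1, c-1)}{J(d, c)}\right)_{a-1+c-d,\; b-1+c-d} \neq 0.
\]

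The next step is to pass to the single $\mathbb{Z}$-grading inherited from the diagonal subtorus: the bidegree $(a-1+c-d,\, b-1+c-d)$ lies in total degree
\[
(a-1+c-d) + (b-1+c-d) = a+b-2+2c-2d.
\]
Now I would invoke Corollary~\ref{cor:quot-Jdrs}: because $p \nmid c$, the quotient $J(d-1,c-1)/J(d,c)$ is concentrated in total degree $c-1$. Hence the only way the displayed bigraded piece can be non-zero is if
\[
a+b-2+2c-2d = c-1.
\]

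Finally, substituting $d = a+b-1-j$ into this equation gives
\[
a+b-2+2c-2(a+b-1-j) = c-1 \quad\Longleftrightarrow\quad c+2j = a+b-1,
\]
which is the desired identity. The only step that required any real content is Corollary~\ref{cor:quot-Jdrs} itself; once that is in hand, the lemma is just bookkeeping, so I do not anticipate any genuine obstacle in writing it up.
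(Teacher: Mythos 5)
Your proposal is correct and is essentially identical to the paper's own proof of Lemma~\ref{lem:dela-delb-ineq}: the paper also sets $d=a+b-1-j$, $r=c$, applies Theorem~\ref{thm:pparts-vs-Monsky} to get non-vanishing of the bigraded piece in bidegree $(a-1+c-d,\,b-1+c-d)$, and then invokes Corollary~\ref{cor:quot-Jdrs} to force the total degree to equal $c-1$, yielding $c+2j=a+b-1$. No issues.
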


\begin{proof}
    If we let $d=a+b-1-j$, and $r=c$, it follows from Theorem~\ref{thm:pparts-vs-Monsky} that
    \[\left(\frac{J(d-1,c-1)}{J(d,c)}\right)_{a-1+c-d,b-1+c-d} \neq 0 \]
    Since $p\nmid r$, Corollary~\ref{cor:quot-Jdrs} implies that this can only happen when
    \[(a-1+c-d)+(b-1+c-d)=c-1,\]
    or equivalently $c+2j=a+b-1$, as desired.
\end{proof}

An easy induction now yields the following important generalization.

\begin{proposition}\label{prop:delcj-in-product}
    If $p\nmid c$ and $\delta_c(-j)$ is a summand of $\delta_{a_1}\cdots\delta_{a_n}$ then
    \[ c+2j = a_1+\cdots+a_n - (n-1).\]
\end{proposition}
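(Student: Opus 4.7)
The plan is to prove the proposition by induction on $n$, with base case $n=2$ furnished by Lemma~\ref{lem:dela-delb-ineq} (the case $n=1$ being trivial: the only summand of $\delta_{a_1}$ is itself, with $j=0$ and $c=a_1 = a_1 - (1-1)$). For the inductive step, assume the result holds for products of $n-1$ factors, and suppose $\delta_c(-j)$ with $p\nmid c$ is a summand of $\delta_{a_1}\cdots\delta_{a_n}$.

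Using commutativity and associativity of Han--Monsky multiplication, I would write
\[ \delta_{a_1}\cdots\delta_{a_n} = \left(\delta_{a_1}\cdots\delta_{a_{n-1}}\right)\cdot\delta_{a_n},\qquad \delta_{a_1}\cdots\delta_{a_{n-1}} = \sum_k \delta_{c_k}(-j_k), \]
so that $\delta_{a_1}\cdots\delta_{a_n} = \sum_k \delta_{c_k}\delta_{a_n}(-j_k)$, and pick an index $k$ for which $\delta_c(-(j-j_k))$ appears as a summand of $\delta_{c_k}\delta_{a_n}$.

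The crucial step, which is where Lemma~\ref{lem:del-prod-div-by-p} enters, is to upgrade the hypothesis $p\nmid c$ into the conclusion $p\nmid c_k$ and $p\nmid a_n$. Indeed, if $p\mid c_k a_n$, then Lemma~\ref{lem:del-prod-div-by-p} would force every constituent of $\delta_{c_k}\delta_{a_n}$ to have its ``size'' divisible by $p$, contradicting $p\nmid c$. With this in hand, the inductive hypothesis applied to $\delta_{a_1}\cdots\delta_{a_{n-1}}$ gives
\[ c_k + 2j_k = a_1+\cdots+a_{n-1} - (n-2), \]
and the base case (Lemma~\ref{lem:dela-delb-ineq}) applied to $\delta_{c_k}\delta_{a_n}$ gives
\[ c + 2(j-j_k) = c_k + a_n - 1. \]
Adding these two identities cancels $c_k$ and $2j_k$, yielding the desired relation $c + 2j = a_1 + \cdots + a_n - (n-1)$.

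The only subtlety worth highlighting is the ``propagation of non-divisibility'': one must recognize that commutativity allows any chosen factor to play the role of $\delta_{a_n}$, and that the coprimality of $c$ to $p$ forces coprimality at each intermediate stage, which is precisely what lets both the base case and the inductive hypothesis apply to the factorization we have chosen. Once this is observed, the proof is a straightforward two-line arithmetic calculation, and there is no deeper obstacle.
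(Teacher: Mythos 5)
Your proof is correct and follows essentially the same route as the paper: induction on $n$ with Lemma~\ref{lem:dela-delb-ineq} as the base case, decomposing off the last factor, using Lemma~\ref{lem:del-prod-div-by-p} to propagate $p\nmid c$ back to the intermediate summand, and adding the two resulting identities. The paper's argument is identical in substance, so there is nothing to flag.
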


\begin{proof}
    We prove the statement by induction on $n$, noting that $n=2$ is covered in Lemma~\ref{lem:dela-delb-ineq}. Suppose now that $n>2$ and that $\delta_c(-j)$ is a summand of $\delta_{a_1}\cdots\delta_{a_n}$. It follows that $\delta_{a_1}\cdots\delta_{a_{n-1}}$ contains a summand $\delta_b(-i)$ such that $\delta_c(-(j-i))$ is a summand of $\delta_b\cdot\delta_{a_n}$.

    By Lemma~\ref{lem:del-prod-div-by-p}, the hypothesis $p\nmid c$ implies that $p\nmid b$. We can therefore apply the induction hypothesis to conclude that
    \[ b+2i = a_1+\cdots+a_{n-1}-(n-2).\]
    Applying Lemma~\ref{lem:dela-delb-ineq} to the product $\delta_b\cdot\delta_{a_n}$, we conclude that
    \[ c+2(j-i) = b+a_n-1.\]
    Adding the two displayed equalities above yields the induction step and concludes our proof.
\end{proof}

While Proposition~\ref{prop:delcj-in-product} uniquely identifies the degree shift of a summand $\delta_c$, it is not a priori clear for which values of $c$ with $p\nmid c$ is there a summand $\delta_c$ of $\delta_{a_1}\cdots\delta_{a_n}$. We show next that in characteristic $p=2$ this summand is uniquely determined. As in \cite{gao-raicu}*{Introduction}, \cite{GRV}*{Section~5.3}, we write $a\oplus b$ for the \defi{Nim-sum} of two non-negative integers $a,b$.

\begin{proposition}\label{prop:delab-odd-Nim}
    Suppose that $\op{char}(\kk)=2$. For $a,b,c\geq 0$, we have that $\delta_{2c+1}(-j)$ appears as a summand of $\delta_{2a+1}\delta_{2b+1}$ if and only if
    \[ c = a\oplus b \quad\text{ and }\quad j = a+b-c.\]
\end{proposition}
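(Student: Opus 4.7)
The plan is to translate the claim via Theorem~\ref{thm:pparts-vs-Monsky} and Lemma~\ref{lem:quot-Jdr-char2} into a purely combinatorial question about binary digits, and then to analyze it directly. Since $2c+1$ is coprime to $p=2$, Proposition~\ref{prop:delcj-in-product} (applied with $n=2$) forces the shift of any summand $\delta_{2c+1}(-j)$ of $\delta_{2a+1}\delta_{2b+1}$ to be $j = a+b-c$, so only the value of $c$ remains to be characterized. Setting $d = a+b+c+1$ and $r = 2c+1$, I would use Theorem~\ref{thm:pparts-vs-Monsky} to rephrase the appearance of $\delta_{2c+1}(-j)$ as the non-vanishing of the bidegree $(a+c-b,\, b+c-a)$ component of $J(d-1, r-1)/J(d, r)$. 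Invoking Lemma~\ref{lem:quot-Jdr-char2} with the binary expansion $2c+1 = 1 + 2^{r_1} + \cdots + 2^{r_k}$ (so $2c = 2^{r_1}+\cdots+2^{r_k}$), this reduces to the conjunction of (A) $(2c+1,\, a+b-c) \in NC$, i.e.\ $2c+1$ and $a+b-c$ have disjoint $1$-bits; and (B) the monomial $z_1^{a+c-b}z_2^{b+c-a}$ occurs in $\prod_{j=1}^k(z_1^{2^{r_j}} + z_2^{2^{r_j}})$, equivalently $a+c-b$ is a non-negative subset-sum of the bits $\{2^{r_1},\ldots,2^{r_k}\}$ of $2c$.

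For the ``if'' direction, assuming $c = a\oplus b$, I would invoke the standard identity $a+b = (a\oplus b) + 2(a\wedge b)$ (where $\wedge$ denotes bitwise AND) to get $a+b-c = 2(a\wedge b)$, whose $1$-bits lie at positions $i+1$ with $a_i=b_i=1$; these are disjoint from the $1$-bits of $2c+1 = 1 + 2(a\oplus b)$ (namely position $0$ together with positions $i+1$ with $a_i\neq b_i$), which establishes (A). For (B), a direct bit-by-bit computation yields
\[
a+c-b = \sum_{i\,:\,a_i=1,\,b_i=0} 2^{i+1}, \qquad b+c-a = \sum_{i\,:\,a_i=0,\,b_i=1} 2^{i+1},
\]
and these two sums visibly partition the bits of $2c = 2(a\oplus b)$ (note that $|a-b| \leq a\oplus b$ ensures both exponents are non-negative).

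For the converse, assume (A) and (B). Let $P = \{i : c_i=1\}$ be the support of $c$, and let $Q\subseteq P$ be the subset provided by (B), so that $a+c-b = \sum_{i\in Q} 2^{i+1}$ and $b+c-a = \sum_{i\in P\setminus Q} 2^{i+1}$. Set $S = a+b-c$; condition (A) ensures that $S$ is even and that the bit positions of $S/2$ all lie outside $P$. Combining these three relations by half-sum and half-difference produces
\[
a = \sum_{i\in Q} 2^i + S/2, \qquad b = \sum_{i\in P\setminus Q} 2^i + S/2.
\]
Since in each expression the two summands are supported on disjoint sets of bit positions ($P$ versus $\mathbb{Z}_{\geq 0}\setminus P$), these are the actual binary expansions of $a$ and $b$. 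A three-case check ($j \in Q$, $j \in P\setminus Q$, or $j\notin P$) then gives $a_j \oplus b_j = c_j$ for every $j$, hence $c = a\oplus b$.

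The hard part will be the bookkeeping in the converse: one must leverage condition (A) precisely to verify that the displayed formulas for $a$ and $b$ have non-overlapping supports and thus are the honest binary expansions, because without this disjointness the XOR cannot be read off digitwise from the formulas. Once this is in place, the digit-by-digit verification is mechanical.
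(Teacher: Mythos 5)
Your proof is correct and follows essentially the same route as the paper's: both arguments reduce the statement, via Theorem~\ref{thm:pparts-vs-Monsky}, Lemma~\ref{lem:quot-Jdr-char2}, and Lemma~\ref{lem:dela-delb-ineq}, to exactly the two binary-digit conditions you call (A) and (B). The only difference is in the final combinatorial bookkeeping — the paper parametrizes $a,b$ by an integer $m$ and a partition of the bits of $c$ and verifies $(m,c)\in NC$ with Lucas' theorem, whereas you solve directly for the binary expansions of $a$ and $b$ and read off the XOR digitwise — an equivalent and slightly cleaner computation.
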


\begin{proof}
 We let $r=2c+1$ and consider positive integers $1\leq r_1<\cdots<r_k$ such that
 \[ r = 1 + 2^{r_1} + \cdots + 2^{r_k},\]
 or equivalently, $c=2^{r_1-1} + \cdots + 2^{r_k-1}$.
 
 Suppose first that $c=a\oplus b$ and $j=a+b-c$. The condition $c=a\oplus b$ implies that there exists a non-negative integer $m$ and a partition $\{1,\cdots,k\} = A \sqcup B$ such that
 \begin{equation}\label{eq:ab=m+stuff} 
 a = m + \sum_{j\in A} 2^{r_j-1}\quad\text{ and }\quad b = m + \sum_{j\in B} 2^{r_j-1},
 \end{equation}
 where none of $2^{r_j-1}$ is a term in the $2$-adic expansion of $m$. We can then write $j=a+b-c = 2m$. If we define $d = (2a+1)+(2b+1)-1-j = r + 2m$, then we have that $(r,d-r)\in NC$, hence Lemma~\ref{lem:quot-Jdr-char2} implies
 \[\left[\frac{J(d-1,r-1)}{J(d,r)}\right] = \prod_{j=1}^k F^{2^{r_j}}(h_1) = (z_1^{2^{r_1}}+z_2^{2^{r_1}})\cdots (z_1^{2^{r_k}}+z_2^{2^{r_k}}). \]
Using the fact that
\[ (2a+1)-1+r-d = \sum_{j\in A}2^{r_j}\quad\text{ and }\quad (2b+1)-1+r-d = \sum_{j\in B}2^{r_j},\]
we can apply Theorem~\ref{thm:pparts-vs-Monsky} to conclude that $\delta_{2c+1}(-j)$ appears as a summand in $\delta_{2a+1}\delta_{2b+1}$, as desired.

Conversely, if we assume that $\delta_{2c+1}(-j)$ appears as a summand in $\delta_{2a+1}\delta_{2b+1}$, then it follows from Lemma~\ref{lem:dela-delb-ineq} that $j=a+b-c$. We let $d = (2a+1)+(2b+1)-1-j = a+b+c+1$, and apply Lemma~\ref{lem:quot-Jdr-char2} to deduce that $(r,d-r)=(2c+1,a+b-c)\in NC$. Moreover, using the fact that
\[ (2a+1)-1+r-d = c + a - b\quad\text{ and }\quad (2b+1)-1+r-d = c-a+b,\]
we conclude by combining Lemma~\ref{lem:quot-Jdr-char2} with Theorem~\ref{thm:pparts-vs-Monsky} that
\[ z_1^{c+a-b}\cdot z_2^{c+b-a} \text{ appears as a term in the expansion of }(z_1^{2^{r_1}}+z_2^{2^{r_1}})\cdots (z_1^{2^{r_k}}+z_2^{2^{r_k}}),\]
and in particular $(c+a-b,c+b-a)\in NC$. Since the sum $(c+a-b)+(c+b-a)=2c$, it follows that there exists a partition $\{1,\cdots,k\} = A \sqcup B$ such that
 \[ c+a-b = \sum_{j\in A} 2^{r_j}\quad\text{ and }\quad c+b-a = \sum_{j\in B} 2^{r_j}.\]
 If we define $m=(a+b-c)/2$ then \eqref{eq:ab=m+stuff} holds, and we have $m\geq 0$ because $(2c+1,a+b-c)\in NC$. Moreover,
 \[ {a+b+c+1 \choose a+b-c} = {2m+2c+1 \choose 2m} \overset{(\op{mod}\ 2)}\equiv {m+c \choose c},\]
 and therefore $(m,c)\in NC$. This implies that none of $2^{r_j-1}$ (which are the terms in the $2$-adic expansion of $c$) is a term in the $2$-adic expansion of $m$. Using \eqref{eq:ab=m+stuff} again yields $a\oplus b = c$ and concludes the proof.
\end{proof}

By induction, we get the following important consequence.

\begin{corollary}\label{cor:prod-dela-odd-Nim}
    Suppose that $\op{char}(\kk)=2$. If $a_1,\cdots,a_n,c\geq 0$, we have that $\delta_{2c+1}(-j)$ appears as a summand of $\delta_{2a_1+1}\delta_{2a_2+1}\cdots\delta_{2a_n+1}$ if and only if
    \[ c = a_1\oplus a_2\oplus\cdots\oplus a_n \quad\text{ and }\quad j = a_1+\cdots+a_n-c.\]
\end{corollary}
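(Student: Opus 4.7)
The plan is to proceed by induction on $n$, with the case $n=1$ trivial and the case $n=2$ being exactly Proposition~\ref{prop:delab-odd-Nim}. The crucial observation that makes the induction work is a consequence of Lemma~\ref{lem:del-prod-div-by-p}: in characteristic $2$, if $\delta_b(-i)$ is a summand of the $(n-1)$-fold product with $b$ even, then $\delta_b \cdot \delta_{2a_n+1}$ decomposes into summands whose indices are all even (since $2 \mid b \cdot (2a_n+1)$). Therefore the odd-indexed summands of $\delta_{2a_1+1}\cdots\delta_{2a_n+1}$ arise exclusively from multiplying odd-indexed summands of $\delta_{2a_1+1}\cdots\delta_{2a_{n-1}+1}$ by $\delta_{2a_n+1}$ and tracking the shifts.

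For the inductive step I would assume the result for $(n-1)$-fold products. By the induction hypothesis, the only odd-indexed summand (in the sense of which $(c',j')$ makes $\delta_{2c'+1}(-j')$ appear as a summand) of $\delta_{2a_1+1}\cdots\delta_{2a_{n-1}+1}$ is given by $c' = a_1 \oplus \cdots \oplus a_{n-1}$ and $j' = a_1+\cdots+a_{n-1}-c'$. Applying Proposition~\ref{prop:delab-odd-Nim} to the pairwise product $\delta_{2c'+1} \cdot \delta_{2a_n+1}$, the only odd-indexed summand of this product is $\delta_{2c+1}(-j'')$ with $c = c'\oplus a_n$ and $j'' = c'+a_n-c$. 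By the observation of the previous paragraph, this lifts to the only odd-indexed summand of the full $n$-fold product, with total shift
\[
j' + j'' \;=\; (a_1+\cdots+a_{n-1}-c') + (c'+a_n-c) \;=\; a_1+\cdots+a_n-c,
\]
and with index parameter $c = c'\oplus a_n = a_1 \oplus \cdots \oplus a_n$ by associativity of the Nim-sum, matching the claimed description.

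Both the ``if'' and ``only if'' directions are handled simultaneously by the biconditional in Proposition~\ref{prop:delab-odd-Nim}, so no separate converse argument is required. All of the substantive work has already been absorbed into Proposition~\ref{prop:delab-odd-Nim} (which in turn rests on the characteristic $2$ analysis of the ideals $J(d,r)$ in Lemma~\ref{lem:quot-Jdr-char2} together with Theorem~\ref{thm:pparts-vs-Monsky}), so the corollary itself presents no genuine obstacle; the only care needed is to correctly invoke Lemma~\ref{lem:del-prod-div-by-p} to discard the even-indexed summands of the $(n-1)$-fold product when chasing odd summands in the $n$-fold product.
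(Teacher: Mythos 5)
Your proposal is correct and matches the induction the paper intends (the paper states the corollary follows "by induction" without spelling it out, mirroring the argument used for Proposition~\ref{prop:delcj-in-product}). The key point you identify — using Lemma~\ref{lem:del-prod-div-by-p} to discard even-indexed summands of the $(n-1)$-fold product so that the unique odd summand from the induction hypothesis is the only one that can contribute, and then invoking the biconditional of Proposition~\ref{prop:delab-odd-Nim} together with associativity of the Nim-sum — is exactly the right mechanism.
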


\section{Recursive description of cohomology and consequences}\label{sec:recursive-cohomology}

We let $V=\kk^n$, let $\PP^{n-1}=\bb{P}V$, and consider the sheaves $\mc{F}^d_r$ defined (in analogy with the $\bb{P}^1$ case) by
\[ 0 \lra \mc{F}^d_r \lra D^dV \oo \mc{O}_{\PP^{n-1}} \overset{\Delta}{\lra} D^{d-r}V \oo \mc{O}_{\PP^{n-1}}(r) \lra 0.\]
By the long exact sequence in cohomology, it follows that if $e\geq -1$ (in fact $e>-n$) then $\mc{F}^d_r(e)$ can only have non-zero cohomology in degrees $i=0,1$.

With $S$, $M$ as in Section~\ref{sec:prelim}, we can view $\Delta$ as the sheafification of map of graded $S$-modules
\[ M_{d,\bullet} = D^dV \oo S \overset{\cdot\omega^r}{\lra} M_{d-r,\bullet+r} = D^{d-r}V \oo S,  \]
so we will usually identify $\Delta = \cdot\omega^r$. We get for $e\geq -1$ an exact sequence
\[ 0 \lra H^0\left(\bb{P}^{n-1},\mc{F}^d_r(e)\right) \lra M_{d,e} \overset{\cdot\omega^r}{\lra} M_{d-r,e+r} \lra H^1\left(\bb{P}^{n-1},\mc{F}^d_r(e)\right) \lra 0\]
Moreover, we have a commutative diagram
\[
\xymatrix{
& & 0 \ar[d] & & \\
& & D^d\mc{R} \ar[d] &  & \\
0 \ar[r] & \mc{F}^d_r \ar[r] \ar[d] & D^dV \oo \mc{O}_{\PP^{n-1}} \ar[r]^{\cdot\omega^{r}} \ar[d]^{\cdot\omega} & D^{d-r}V \oo \mc{O}_{\PP^{n-1}}(r) \ar[r] \ar@{=}[d] & 0 \\
0 \ar[r] & \mc{F}^{d-1}_{r-1}(1) \ar[r] & D^{d-1}V \oo \mc{O}_{\PP^{n-1}}(1) \ar[r]^{\cdot\omega^{r-1}} \ar[d] & D^{d-r}V \oo \mc{O}_{\PP^{n-1}}(r) \ar[r] & 0 \\
& & 0 & & \\
}
\]
where each displayed short exact sequence is exact. We get an induced short exact sequence
\[ 0 \lra D^d\mc{R} \lra \mc{F}^d_r \lra \mc{F}^{d-1}_{r-1}(1) \lra 0.\]
The following result establishes a sufficient condition for the vanishing of the connecting homomorphism in the cohomology long exact sequence associated to suitable twists of the above sequence.

\begin{theorem}\label{thm:lefschetz-Fdr}
    Consider the short exact sequence
    \[ 0 \lra D^d\mc{R}(e) \lra \mc{F}^d_r(e) \lra \mc{F}^{d-1}_{r-1}(e+1) \lra 0.\]
    We have for $2\leq r\leq p$, $e\geq d-1$ and all $i$ a short exact sequence
    \[ 0 \lra H^i\left(\bb{P}^{n-1},D^d\mc{R}(e)\right) \lra H^i\left(\bb{P}^{n-1},\mc{F}^d_r(e)\right) \lra H^i\left(\bb{P}^{n-1},\mc{F}^{d-1}_{r-1}(e+1)\right) \lra 0.\]
\end{theorem}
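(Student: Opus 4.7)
The plan is to reduce the claim to the vanishing of a single connecting homomorphism, and then to analyze it multigrade-by-multigrade using the Han--Monsky description from Section~\ref{sec:Han-Monsky}. Since each of $D^d\mc{R}(e)$, $\mc{F}^d_r(e)$, and $\mc{F}^{d-1}_{r-1}(e+1)$ has cohomology concentrated in degrees $0$ and $1$ (we are in the range $e\geq d-1\geq -1$), the long exact sequence in cohomology degenerates to a six-term sequence, and the assertion of the theorem is equivalent to the single connecting map $\delta : H^0(\mc{F}^{d-1}_{r-1}(e+1))\to H^1(D^d\mc{R}(e))$ being zero. For $i\geq 2$ all three terms vanish trivially, so this really is the only thing to check.

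To control $\delta$ I would use the $(\kk^{\times})^n$-weight decomposition set up in Section~\ref{sec:prelim}. Fix $\ul{a}\in\bb{Z}^n_{\geq 0}$: then $M_{\ul{a}}$ is a cyclic $\kk[T]$-module whose isomorphism class in the graded Han--Monsky ring is $\delta_{a_1+1}\cdots\delta_{a_n+1}$, with $T$ acting as multiplication by $\omega$. The identification \eqref{eq:multigraded-comps-coh}, applied to both $D^d\mc{R}(e)$ (the $r=1$ case) and $\mc{F}^d_r(e)$, matches the $\ul{a}$-components of the cohomology with the kernel and cokernel of multiplication by $T^r$ on $M_{\ul{a}}$ in the appropriate degrees. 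Running the six-term sequence multigrade-wise reduces the vanishing of $\delta$ in weight $\ul{a}$ to the inclusion $(0:T^{r-1})_{e+1}\subseteq T\cdot (M_{\ul{a}})_e$.

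Decomposing $M_{\ul{a}}=\bigoplus_k \delta_{c_k}(-j_k)$ and inspecting each summand, I would then pinpoint the only possible obstruction: the inclusion fails in the summand $\delta_c(-j)$ precisely when $j=e+1$ and $c\leq r-1$, since the degree-$(e+1)$ generator of such a summand is automatically annihilated by $T^{r-1}$ yet cannot lie in the image of $T$. In all other cases, the elements in $(0:T^{r-1})_{e+1}$ restricted to a summand are strictly positive powers of $T$ applied to the generator, and hence are manifestly in $T\cdot (M_{\ul{a}})_e$. So the argument comes down to showing that $\delta_{a_1+1}\cdots\delta_{a_n+1}$ contains no summand of the form $\delta_c(-(e+1))$ with $c\leq r-1$, and this is where the two hypotheses of the theorem enter.

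The concluding step is a two-case dichotomy based on the divisibility of $c$ by $p$, combining the two structural results from Section~\ref{sec:Han-Monsky}. If $p\mid c$, then Lemma~\ref{lem:del-prod-div-by-p} (applied inductively to the $n$-fold product) forces $c\geq p\geq r$, directly contradicting $c\leq r-1$. If $p\nmid c$, then Proposition~\ref{prop:delcj-in-product}, applied with the subscripts $a_i$ there replaced by $a_i+1$, gives
\[ c+2(e+1) \;=\; (a_1+1)+\cdots+(a_n+1)-(n-1) \;=\; |\ul{a}|+1, \]
so $c=|\ul{a}|-2e-1$; substituting $d=|\ul{a}|-e$ and using $e\geq d-1$ yields $|\ul{a}|\leq 2e+1$, hence $c\leq 0$, which is impossible. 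Thus the hypothesis $r\leq p$ eliminates the $p\mid c$ possibility and $e\geq d-1$ eliminates the $p\nmid c$ possibility, and neither bad summand can occur. The step I expect to require the most care in writing up is the multigraded reinterpretation of $\delta$ itself -- in particular, confirming that the degree bookkeeping between the sheaf twist $e$, the divided power $d$, and the standard grading on the Han--Monsky algebra $M_{\ul{a}}$ is internally consistent across all three terms of the short exact sequence.
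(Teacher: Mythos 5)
Your proposal is correct and follows essentially the same route as the paper: reduce to surjectivity of $H^0(\mc{F}^d_r(e))\to H^0(\mc{F}^{d-1}_{r-1}(e+1))$ (equivalently, vanishing of the connecting map), pass to a fixed multidegree $\ul{a}$ where this becomes surjectivity of $(0:T^r)_e\overset{\cdot T}{\lra}(0:T^{r-1})_{e+1}$ in the Artinian algebra, identify the obstruction with a summand $\delta_c(-(e+1))$ with $1\leq c\leq r-1$, and rule it out via Proposition~\ref{prop:delcj-in-product} using $r\leq p$ (which forces $p\nmid c$, making your separate $p\mid c$ case and the appeal to Lemma~\ref{lem:del-prod-div-by-p} unnecessary) and $e\geq d-1$ (which forces $c=d-e-1\leq 0$).
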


\begin{proof}
    Since $e\geq d-1\geq -1$, we noted above that cohomology can only be non-zero for $i=0,1$. To establish the result, it is then enough to check exactness when $i=0$. We recall that there is a natural $\bb{Z}^n$-grading on cohomology, and we establish the exactness on multigraded component. 

    We fix a multidegree $\ul{a}=(a_1,\cdots,a_n)\in\bb{Z}^n_{\geq 0}$ with $d+e=a_1+\cdots+a_n$, and the corresponding Artinian algebra
    \[ A = \kk[T_1,\cdots,T_n]/\langle T_1^{1+a_1},\cdots,T_n^{1+a_n}\rangle.\]
    Recalling that $T=T_1+\cdots+T_n$, and its identification with $\omega$ from Section~\ref{sec:prelim}, we observe that the desired surjectivity in multidegree $\ul{a}$ is equivalent to proving that inside the algebra $A$, the natural map
    \[ (0:T^r)_e \overset{\cdot T}{\lra} (0:T^{r-1})_{e+1}\]
    is surjective. If we suppose by contradiction that surjectivity fails, then there exists a summand $\delta_k(-e-1)$ of $\delta_{a_1+1}\cdots \delta_{a_n+1}$ with $1\leq k\leq r-1$. Note that the hypothesis $r\leq p$ implies that $p\nmid k$. By Proposition~\ref{prop:delcj-in-product}, it follows that
    \[ k+2(e+1) = (a_1+1) + \cdots + (a_n+1) - (n-1) = d + e + 1,\]
    or equivalently, that
    \[ k = d-e-1.\]
    The hypothesis $e\geq d-1$ implies $k\leq 0$, which is a contradiction and concludes our proof.
\end{proof}

Our next result shows that the cohomology of $\mc{F}^d_p$ is related, up to Frobenius twists, to that of $D^d\mc{R}$, which is the key observation leading up to our recursive description of cohomology. Esentially, this is based on the simple identity
\[ \omega^p = x_1^py_1^p+\cdots+x_n^py_n^p,\]
which holds under the assumption that $\op{char}(\kk)=p$. What it says is that the $S$-module map
\begin{equation}\label{eq:mult-omegap-onS} 
D^d V \oo S \overset{\cdot\omega^p}{\lra}  D^{d-p}V \oo S
\end{equation}
is defined over the polynomial subring $S^{(p)}=\Sym(F^pV) = \kk[x_1^p,\cdots,x_n^p]$.

\begin{theorem}\label{thm:frobenius-Fdp}
    We let $\phi:\bb{P}V\lra \bb{P}(F^pV)$ denote the Frobenius morphism, induced by the inclusion of homogeneous coordinate rings $\Sym(F^pV)\subset\Sym(V)$. We have
    \[ \phi_*(\mc{F}^d_p(e)) = \bigoplus_{\substack{0\leq a\leq d/p \\ b\leq e/p}} T_p\Sym^{d-pa}V \oo T_p\Sym^{e-pb}V \oo D^a\tilde{\mc{R}}(b),\]
    where $\tilde{\mc{R}}$ denotes the tautological subsheaf on $\bb{P}(F^pV)$.
\end{theorem}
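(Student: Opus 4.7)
The plan is to exploit the characteristic-$p$ identity
\[ \omega^p \;=\; x_1^py_1^p + \cdots + x_n^py_n^p,\]
which shows that $\Delta = \cdot\omega^p$ is $S^{(p)}$-linear for $S^{(p)} = \Sym(F^pV)$, and therefore descends to an $\tilde{\mc{O}}$-linear map after applying $\phi_*$. Since $\phi$ is finite (hence affine), $\phi_*$ is exact, so we may work directly with the pushed-forward defining sequence
\[ 0 \lra \phi_*\mc{F}^d_p(e) \lra D^dV \oo \phi_*\mc{O}_{\PP V}(e) \xrightarrow{\phi_*\Delta} D^{d-p}V \oo \phi_*\mc{O}_{\PP V}(e+p). \]

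First I would establish two direct sum decompositions. On the sheaf side, the fact that $\Sym V$ is free over $S^{(p)}$ with basis $\{x^E : 0\le e_i<p\}$ sheafifies to
\[ \phi_*\mc{O}_{\PP V}(e) \;=\; \bigoplus_{b\le e/p} T_p\Sym^{e-pb}V \oo \tilde{\mc{O}}(b), \]
as $\tilde{\mc{O}}$-modules on $\PP(F^pV)$. On the representation side, I would verify using the divided power basis that
\[ D^dV \;=\; \bigoplus_{0\le a\le d/p} T_pD^{d-pa}V \oo F^pD^aV, \]
with $y^{(D)}$ split via $d_i = pd_i'' + d_i'$ ($0\le d_i'<p$) into $y^{(D')}\oo y^{(pD'')}$; the identification $F^pD^aV \cong D^a(F^pV)$ via $y^{(pE)}\leftrightarrow \tilde y^{(E)}$ uses Lucas' theorem to guarantee that the relevant divided power factorizations involve unit coefficients. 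This is the characteristic-$p$ analog of the filtration cited as \cite{gao-raicu}*{(2.7)} in the text.

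Next I would analyze $\phi_*\Delta$ through these decompositions. Using $\omega^p = \sum x_i^p y_i^p$, the action of $y_i^p$ annihilates $T_pD^{d-pa}V$ (since any $y^{(D')}$ with $d_i'<p$ is killed) and acts on $F^pD^aV \cong D^a(F^pV)$ as the tautological contraction $\tilde y_i: D^a(F^pV) \to D^{a-1}(F^pV)$; dually, multiplication by $x_i^p$ is a section of $\tilde{\mc{O}}(1)$ acting as such on the $\tilde{\mc{O}}(b)$ factor, while leaving $T_p\Sym^{e-pb}V$ untouched. Consequently $\phi_*\Delta$ decomposes as a direct sum indexed by $(a,b)$, each summand of the form
\[ \op{id}_{T_pD^{d-pa}V \oo T_p\Sym^{e-pb}V} \oo \bigl(D^a(F^pV)\oo\tilde{\mc{O}}(b) \xrightarrow{\tilde\omega} D^{a-1}(F^pV)\oo\tilde{\mc{O}}(b+1)\bigr), \]
where $\tilde\omega = \sum x_i^p \oo \tilde y_i$ is the tautological $\omega$-form on $\PP(F^pV)$. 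By the defining sequence of the tautological subsheaf $\tilde{\mc{R}}$ on $\PP(F^pV)$, the kernel of the inner map is precisely $D^a\tilde{\mc{R}}(b)$, and collecting summands yields the claimed formula after the identification $T_pD^eV \cong T_p\Sym^eV$ (which matches characters and, in the rank-two case needed elsewhere in the paper, agrees with the explicit Schur-functor formulas used in Section~\ref{sec:pparts-P1}).

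The main obstacle will be justifying step (ii) as a genuine direct sum rather than merely a filtration with these associated graded pieces, and verifying that the explicit formula for $\omega^p$ really does split $\phi_*\Delta$ into the block form described. The conceptual point is that the Frobenius identity $\omega^p = \sum x_i^p y_i^p$ couples only the $F^p$-components of both $D^dV$ and $\phi_*\mc{O}_{\PP V}(e)$, leaving the truncated factors as pure multiplicity spaces; this is the characteristic-$p$ incarnation of the freshman's dream that cleanly produces the bi-indexing of the theorem.
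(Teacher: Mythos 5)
Your proposal is correct and follows essentially the same route as the paper: the paper likewise uses the decomposition $D^dV=\bigoplus_a T_p\Sym^{d-pa}V\oo F^p(D^aV)$, observes that $\cdot\omega^p$ acts as $\op{id}\oo F^p(\cdot\omega)$ on each summand because $\omega^p=\sum x_i^py_i^p$ lives over $S^{(p)}$, identifies the kernels with (pullbacks of) $D^a\tilde{\mc{R}}$, and finishes with $\phi_*$ and the decomposition of $\phi_*\mc{O}_{\PP V}(e)$. The only difference is cosmetic: you push forward first and decompose the resulting sequence, whereas the paper decomposes the graded $S^{(p)}$-module map, sheafifies to get $\mc{F}^d_p=\bigoplus_a T_p\Sym^{d-ap}V\oo\phi^*(D^a\tilde{\mc{R}})$, and then applies the projection formula.
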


\begin{proof} We have decompositions
    \[ D^d V = \bigoplus_{0\leq a\leq d/p} T_p\Sym^{d-pa}V \oo F^p(D^a V),\quad D^{d-p} V = \bigoplus_{1\leq a\leq d/p} T_p\Sym^{d-pa}V \oo F^p(D^{a-1} V)\]
which is equivariant for the action of the maximal torus in $\GL(V)$. The summands involving $T_p\Sym^{d-pa}V$ correspond to the multigraded components of $D^d V$ and $D^{d-p}V$ in multidegrees $\ul{a}\in\bb{Z}^n_{\geq 0}$, where
\[ \ol{a}_1+\cdots+\ol{a}_n = d-pa,\]
and $0\leq\ol{a}_i <p$ denotes the remainder of the division of $a_i$ by $p$. It follows that the map
\[ D^d V \oo S^{(p)} \overset{\cdot\omega^p}{\lra}  D^{d-p}V \oo S^{(p)},\]
which defines \eqref{eq:mult-omegap-onS} after base change $-\oo_{S^{(p)}} S$, decomposes further as a direct sum of maps
\[ T_p\Sym^{d-pa}V \oo F^p(D^a V) \oo S^{(p)} \overset{\cdot\omega^p}{\lra} T_p\Sym^{d-pa}V \oo F^p(D^{a-1} V) \oo S^{(p)},\]
which can be further identified with
\[T_p\Sym^{d-pa}V \oo F^p\left(D^a V \oo S\right) \overset{\op{id}\oo F^p(\cdot\omega)}{\lra} T_p\Sym^{d-pa}V \oo F^p\left(D^{a-1} V \oo S\right). \]
Applying base change $-\oo_{S^{(p)}} S$ and sheafifying, we get that
\[ \mc{F}^d_p = \bigoplus_{a\leq d/p} T_p\Sym^{d-ap}V \oo \phi^*\left(D^a\tilde{R}\right).\]
Applying $\phi_*$ and using the projection formula, along with the decomposition
\[ \phi_*(\mc{O}_{\bb{P}V}(e)) = \bigoplus_b T_p\Sym^{e-pb}V \oo \mc{O}_{\bb{P}(F^pV)}(b),\]
we obtain the desired conclusion.
\end{proof}

We now have all the ingredients necessary to prove the main result of our paper, which gives the recursive description for the cohomology of $D^d\mc{R}(e)$.

\begin{proof}[Proof of Theorem~\ref{thm:coh-recursion}] Taking characters and combining Theorems~\ref{thm:lefschetz-Fdr} and~\ref{thm:frobenius-Fdp} it follows that for $e\geq d-1$ we have
\[
\begin{aligned}
    h^i(D^d\mc{R}(e)) - h^i(D^{d-p}\mc{R}(e+p)) &= h^i(\mc{F}^d_p(e)) - h^i(\mc{F}^{d-1}_p(e+1)) \\
    &= \sum_{a,b} (h'_{e-bp}\cdot h'_{d-ap}-h'_{e+1-bp}\cdot h'_{d-1-ap})\cdot F^p\left(h^i(D^a\mc{R}(b)\right).
\end{aligned}
\]
We can now replace $e$ by $e+jp$ and $d$ by $d-jp$ for $j\geq 0$, and take the sum over all such $j$ to get
\[
\begin{aligned}
h^i(D^d\mc{R}(e)) &= \sum_{j\geq 0} h^i(D^{d-jp}\mc{R}(e+jp)) - h^i(D^{d-(j+1)p}\mc{R}(e+(j+1)p)) \\
&= \sum_{a,b} \Phi_{d-ap,e-bp}\cdot F^p\left(h^i(D^a\mc{R}(b)\right)
\end{aligned}
\]
which proves the desired identity.
\end{proof}

We conclude this section with a proof of \cite{GRV}*{Conjecture~5.1}, which we restate as follows.

\begin{theorem}\label{thm:small-weights}
 Suppose that $tp\leq d<(t+1)p$, where $1\leq t<p$. We have for all $e\geq d-1$
 \[ h^1(D^d\mc{R}(e)) = \sum_{\substack{1\leq B\leq A\leq t \\ 0\leq J\leq A-B}} \left(h'_{e+(B-J)p}\cdot h'_{d-Ap} - h'_{e+(B-J)p+1}\cdot h'_{d-Ap-1}\right) \cdot F^p(s_{(A-B,J)}).\]
\end{theorem}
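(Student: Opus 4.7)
The plan is to apply Theorem~\ref{thm:coh-recursion} with $i=1$, use the base-case formula \eqref{eq:h01-for-small-d} to evaluate the inner cohomology characters explicitly, and then reindex the resulting triple sum. Since $tp \leq d < (t+1)p$, the outer index $a$ in Theorem~\ref{thm:coh-recursion} satisfies $0 \leq a \leq d/p < t+1$, so $a \leq t$. The contribution from $a=0$ vanishes because $h^1(D^0\mc{R}(b))=h^1(\mc{O}_{\PP}(b))=0$ for $b\geq -1$. For $1 \leq a \leq t < p$, the hypothesis $a<p$ lets me apply \eqref{eq:h01-for-small-d} and substitute $h^1(D^a\mc{R}(b)) = s_{(a-1,b+1)}$ (which I treat as $0$ for $b\geq a-1$). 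This collapses Theorem~\ref{thm:coh-recursion} to
\[
h^1(D^d\mc{R}(e)) \;=\; \sum_{\substack{1 \leq a \leq t \\ -1 \leq b \leq a-2}} \Phi_{d-ap,\, e-bp} \cdot F^p\bigl(s_{(a-1,b+1)}\bigr).
\]

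The second step is to unfold $\Phi_{d-ap,e-bp} = \sum_{k\geq 0}\bigl(h'_{e-bp+kp}h'_{d-ap-kp} - h'_{e-bp+1+kp}h'_{d-ap-1-kp}\bigr)$ and perform the substitution $J = b+1$, $A = a+k$, $B = k+1$, which is invertible via $a = A-B+1$, $b = J-1$, $k = B-1$. Under it one has $h'_{e-bp+kp} = h'_{e+(B-J)p}$ and $h'_{d-ap-kp} = h'_{d-Ap}$ (and similarly for the shifted terms), while $s_{(a-1,b+1)} = s_{(A-B,J)}$. Thus each summand becomes exactly a term of the target formula.

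The final step is to match the index ranges. The conditions $k\geq 0$, $a\geq 1$, $a\leq t$, and $-1\leq b\leq a-2$ translate to $B\geq 1$, $B\leq A$, $A\leq t+B-1$, and $0\leq J\leq A-B$, respectively. To obtain the cleaner range $1\leq B\leq A\leq t$ stated in the theorem, I would note that for any $A\geq t+1$ one has $d-Ap<0$ (since $d<(t+1)p$), forcing both $h'_{d-Ap}$ and $h'_{d-Ap-1}$ to be zero. Consequently all summands with $A>t$ vanish, and after restricting to $A\leq t$ the upper bound $A\leq t+B-1$ is automatic from $B\geq 1$. No substantive obstacle is expected here; the only care needed is in keeping track of four indices and verifying that the nominally out-of-range terms really do vanish.
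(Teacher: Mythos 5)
Your proposal is correct and follows essentially the same route as the paper's proof: apply Theorem~\ref{thm:coh-recursion}, substitute the small-weight formula $h^1(D^a\mc{R}(b))=s_{(a-1,b+1)}$ for $1\leq a\leq t<p$, reindex via $A=a+j$, $B=j+1$, $J=b+1$, and discard the terms with $A>t$ since $h'_{d-Ap}=h'_{d-Ap-1}=0$ there. The index bookkeeping in your final step checks out, so no gaps remain.
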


\begin{proof} It follows from Theorem~\ref{thm:coh-recursion} that
\[ h^1(D^d\mc{R}(e)) = \sum_{a=0}^t\sum_{b\in\bb{Z}}\sum_{j\geq 0}\left(h'_{e-bp+jp}\cdot h'_{d-ap-jp} - h'_{e-bp+1+jp}\cdot h'_{d-ap-1-jp}\right)\cdot F^p\left(h^1(D^a\mc{R}(b)\right).\]
Since $a<p$, we have as in characteristic zero
\[ h^1(D^a\mc{R}(b)) = \begin{cases}
    s_{(a-1,b+1)} & \text{if }-1\leq b\leq a-2, \\
    0 & \text{otherwise}.
\end{cases}\]
We can therefore restrict the above sum to $a\geq 1$, $-1\leq b\leq a-2$, and also restrict to $a+j\leq t$ (since otherwise $h'_{d-ap-jp}=h'_{d-ap-1-jp}=0$). If we set $A=a+j$, $B=j+1$, $J=b+1$, then $A-B = a-1 \geq b+1 = J \geq 0$ and $t\geq A\geq B\geq 1$. Moreover, we have
\[e+(B-J)p = e - bp + jp,\quad\text{and}\quad d-Ap = d-ap-jp.\]
It follows that the conclusion of Theorem~\ref{thm:coh-recursion} agrees in this case with our desired description of $h^1(D^d\mc{R}(e))$.
\end{proof}

\section{Cohomology in characteristic $2$ and Nim}\label{sec:char-2-Nim}

Throughout this section we assume that $\op{char}(\kk)=2$, and $V=\kk^n$. Our goal is to understand the generating function
\[ \bG(u,v) = \sum_{d,e} h^1(D^d\mc{R}(e))\cdot u^d\cdot v^{d+e},\]
noting that $\bG=\bG(u,v) \in \Lambda[[u,v]]$, where $\Lambda$ is as in \eqref{eq:def-Lambda}. We define
\[\bh'(t) = \sum_{d\geq 0}h'_d \cdot t^d = \prod_{i=1}^n(1+tz_i)\quad\text{and}\quad\bN(t) = \sum_{m\geq 0} \mc{N}_m\cdot t^m,\]
where $\mc{N}_m$ denotes the Nim polynomial \eqref{eq:def-Nim-pol}. The Frobenius action on $\Lambda$ (given by $F^2(z_i)=z_i^2$) extends to $\Lambda[[t]]$ and $\Lambda[[u,v]]$ by sending 
\[F^2(t)=t^2,\quad F^2(u)=u^2,\quad F^2(v)=v^2.\]

\begin{theorem}\label{thm:functional-eqn-G}
    With the notation above, $G$ satisfies the functional equation
    \[ (1+u)\cdot\bG = \bh'(uv)\cdot \bh'(v)\cdot F^2(\bG) + u\cdot F^2\left(\bN(uv^2)\right).\]
\end{theorem}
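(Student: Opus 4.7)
The plan is to extract the functional equation from the short exact sequence
\[
0 \lra D^d\mc{R}(e) \lra \mc{F}^d_2(e) \lra D^{d-1}\mc{R}(e+1) \lra 0
\]
(the one used in the proof of Theorem~\ref{thm:lefschetz-Fdr} with $p=r=2$) together with the Frobenius pushforward decomposition of Theorem~\ref{thm:frobenius-Fdp}. Writing $\partial_{d,e}\colon H^0(D^{d-1}\mc{R}(e+1))\to H^1(D^d\mc{R}(e))$ for the connecting homomorphism in the resulting long exact sequence, a direct chase of the six-term sequence yields the character identity
\[
h^1(D^d\mc{R}(e)) + h^1(D^{d-1}\mc{R}(e+1)) - h^1(\mc{F}^d_2(e)) \;=\; [\op{Im}(\partial_{d,e})],
\]
while Theorem~\ref{thm:frobenius-Fdp} (using $h'_m = [T_2\Sym^m V]$ and the fact that the finite Frobenius morphism $\phi$ is acyclic for coherent pushforward) evaluates
\[
h^1(\mc{F}^d_2(e)) \;=\; \sum_{a,b} h'_{d-2a}\,h'_{e-2b}\,F^2\!\left(h^1(D^a\mc{R}(b))\right).
\]
Multiplying by $u^d v^{d+e}$ and summing over $(d,e)$, the left-hand side collects into $(1+u)\bG - \bh'(uv)\bh'(v)F^2(\bG)$, so the theorem reduces to the generating-function identity
\[
\sum_{d,e}[\op{Im}(\partial_{d,e})]\,u^d v^{d+e} \;=\; u\cdot F^2\!\left(\bN(uv^2)\right).
\]

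The remaining work is a multigraded analysis of $\op{Im}(\partial_{d,e})$. Fixing a multidegree $\ul{a}=(a_1,\dots,a_n)$ with $\sum a_i = d+e$, the dictionary from Section~\ref{sec:prelim} identifies the relevant cohomology with graded pieces of the Artinian algebra $A = \kk[T_1,\dots,T_n]/\langle T_1^{a_1+1},\dots,T_n^{a_n+1}\rangle$ (viewed as a $\kk[T]$-module via $T=T_1+\cdots+T_n$), realizing $\op{Im}(\partial_{d,e})_{\ul{a}}$ as the image of $(0:T)_{e+1}$ inside $A_{e+1}/T\cdot A_e$. Decomposing $A$ into its Han--Monsky summands $\delta_c(-j)$, the $T$-socle element $T^{c-1}g$ of such a summand lies in degree $j+c-1$ and already belongs to $T\cdot A$ whenever $c\geq 2$; consequently $[\op{Im}(\partial_{d,e})]_{\ul{a}}$ equals exactly the number of summands of the form $\delta_1(-(e+1))$ in the product $\delta_{a_1+1}\cdots\delta_{a_n+1}$.

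To enumerate these I invoke the structural results of Section~\ref{sec:Han-Monsky}. Since $p\nmid 1$, Proposition~\ref{prop:delcj-in-product} applied to the product $\delta_{a_1+1}\cdots\delta_{a_n+1}$ forces $1 + 2(e+1) = d+e+1$, i.e.\ $e = d-2$, so $\op{Im}(\partial_{d,e}) = 0$ for $e\neq d-2$. At the boundary $e = d-2$, Lemma~\ref{lem:del-prod-div-by-p} (extended inductively to $n$-fold products) kills any contribution from multidegrees in which some $a_i$ is odd, and Corollary~\ref{cor:prod-dela-odd-Nim} applied with the odd factors $a_i+1 = 2(a_i/2)+1$ shows that $\delta_1(-(d-1))$ is a summand precisely when $\bigoplus(a_i/2)=0$ and $\sum(a_i/2) = d-1$. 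The Nim parity condition $\bigoplus(a_i/2)=0$ forces $\sum(a_i/2)$ to be even, so such $\ul{a}$ exist only when $d$ is odd, and comparison with \eqref{eq:def-Nim-pol} gives $[\op{Im}(\partial_{d,d-2})] = F^2(\mc{N}_{(d-1)/2})$ for odd $d$ (and $0$ otherwise).

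Setting $d = 2m+1$ and summing over $m\geq 0$ then produces $\sum_m F^2(\mc{N}_m)\,u^{2m+1} v^{4m} = u\,F^2(\bN(uv^2))$, establishing the functional equation. The main obstacle in this plan is the identification of $\op{Im}(\partial_{d,e})$ at the boundary $e = d-2$: the essential input is the fine control, provided by Proposition~\ref{prop:delcj-in-product} and Corollary~\ref{cor:prod-dela-odd-Nim}, over exactly which $\delta_c$ summands with $p\nmid c$ occur in the Han--Monsky product; once these are in hand the generating-function identity assembles routinely.
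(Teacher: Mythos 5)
Your proposal is correct and follows essentially the same route as the paper: the paper also works multidegree by multidegree, identifies the failure of exactness of $0\to H^1(D^d\mc{R}(e))\to H^1(\mc{F}^d_2(e))\to H^1(D^{d-1}\mc{R}(e+1))\to 0$ with the presence of a summand $\delta_1(-(e+1))$ in $\delta_{a_1+1}\cdots\delta_{a_n+1}$, applies Lemma~\ref{lem:del-prod-div-by-p} and Corollary~\ref{cor:prod-dela-odd-Nim} to translate this into the Nim condition, and uses Theorem~\ref{thm:frobenius-Fdp} to evaluate the $\mc{F}^d_2$ term. Your version merely makes the connecting-homomorphism bookkeeping (and the implicit multiplicity-one count for the $\delta_1$ summand) more explicit than the paper does.
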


\begin{proof}
 We consider $d\geq 0$, $e\geq -d$, and as in the proof of Theorem~\ref{thm:coh-recursion}, we fix a multidegree $\ul{a}=(a_1,\cdots,a_n)\in\bb{Z}^n_{\geq 0}$. The sequence
\[ 0 \lra H^1\left(\bb{P}^{n-1},D^d\mc{R}(e)\right)_{\ul{a}} \lra H^1\left(\bb{P}^{n-1},\mc{F}^d_2(e)\right)_{\ul{a}} \lra H^1\left(\bb{P}^{n-1},D^{d-1}\mc{R}(e+1)\right)_{\ul{a}} \lra 0\]
fails to be exact if and only if $\delta_1(-(e+1))$ appears as a summand in $\delta_{a_1+1}\cdots\delta_{a_n+1}$, which by Lemma~\ref{lem:del-prod-div-by-p} and Corollary~\ref{cor:prod-dela-odd-Nim} is equivalent to the existence of integers $i_1,\cdots,i_n\in\bb{Z}_{\geq 0}$ with
\[ a_1=2i_1,\cdots,a_n=2i_n,\ i_1\oplus\cdots\oplus i_n=0,\ e+1 = i_1+\cdots+i_n.\]
This is further equivalent to $e+1=(d+e)/2$ and $z_1^{i_1}\cdots z_n^{i_n}$ being a monomial in $\mc{N}_{(d-1)/2}$. Taking characters and summing over all $d,e$ and all multidegrees $\ul{a}$ we conclude
\[ \sum_{d,e} \left(h^1(D^d\mc{R}(e))+h^1(D^{d-1}\mc{R}(e+1))\right)\cdot u^d\cdot v^{d+e} = \left(\sum_{d,e} h^1(\mc{F}^d_2(e))\cdot u^d\cdot v^{d+e}\right) + u\cdot F^2\left(\bN(uv^2)\right).\]
The left side is $(1+u)\cdot\bG$, so we need to understand the first term on the right side. We can apply Theorem~\ref{thm:frobenius-Fdp} to conclude
\[
\begin{aligned}
\sum_{d,e} h^1(\mc{F}^d_2(e))\cdot u^d\cdot v^{d+e} &= \sum_{d,e,a,b} h'_{d-2a}\cdot h'_{e-2b}\cdot F^2\left(h^1(D^a\mc{R}(b))\right)\cdot u^d\cdot v^{d+e} \\
&=\sum_{d,e,a,b} \left(h'_{d-2a}\cdot (uv)^{d-2a}\right) \cdot \left(h'_{e-2b}\cdot v^{2b}\right)\cdot F^2\left(h^1(D^a\mc{R}(b))\cdot u^a\cdot v^{a+b}\right) \\
&= \bh'(uv) \cdot \bh'(v)\cdot F^2(\bG),
\end{aligned}
\]
concluding our proof.
\end{proof}

To translate Theorem~\ref{thm:functional-eqn-G} into a more concrete formula for the cohomology of $D^d\mc{R}(e)$, we introduce more notation: for $q=2^k\geq 2$ we write
\[\bh^{(q)}(t) = \sum_{d\geq 0} h^{(q)}_d \cdot t^d,\]
so that $\bh' = \bh^{(2)}$. The following corrects \cite{GRV}*{Conjecture~5.2}.

\begin{corollary}\label{cor:coh-Ddr-using-Nim}
    If $\op{char}(\kk)=2$ and $e\geq d-1$ then
 \[ \left[H^1\left(\PP^{n-1},D^d\mc{R}(e)\right)\right] = \sum_{(q,m,j)\in\Lambda_d} F^{2q}(\mc{N}_m)\cdot s^{(q)}_{(e-(2m-2j-1)q,d-(2m+2j+1)q)},\]
 where $\Lambda_d = \left\{(q,m,j) | q=2^r\text{ for some }r\geq 1,\ m,j\geq 0,\text{ and }(2m+2j+1)q\leq d\right\}$.
\end{corollary}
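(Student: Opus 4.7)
The plan is to solve the functional equation of Theorem~\ref{thm:functional-eqn-G} by iteration, obtain a closed form for $\bG$, and then match the coefficient of $u^d v^{d+e}$ with the asserted formula under the hypothesis $e\geq d-1$. Set $A=\bh'(uv)\bh'(v)$ and $B=u\,F^2(\bN(uv^2))$, so the functional equation reads $(1+u)\bG = A\cdot F^2(\bG)+B$. Since $H^1(\PP^{n-1},\mc{O}_{\PP}(e))=0$ for $e\geq -1$ (when $n\geq 2$), the series $\bG$ lies in $u\cdot\Lambda[[u,v]]$, and hence $F^{2^{k+1}}(\bG)\to 0$ in the $u$-adic topology. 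Multiplying the functional equation successively by $(1+u^2),(1+u^4),\ldots,(1+u^{2^k})$ and substituting the $F^2$-iterates of the equation into itself yields, after passing to the limit $k\to\infty$ and applying the telescoping identity $\prod_{j=0}^{k}(1+u^{2^j})=(1-u^{2^{k+1}})/(1-u)$, the closed form
\[\bG \;=\; (1-u)\sum_{Q=2^k,\,k\geq 0}\frac{Q_k\cdot F^Q(B)}{1-u^{2Q}},\qquad Q_k := A\cdot F^2(A)\cdots F^{2^{k-1}}(A)\ (\text{with } Q_0:=1).\]

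Next I would unpack each factor. Applying the same telescoping identity with $x=uvz_i$ and $x=vz_i$ and taking a product over $i=1,\ldots,n$ establishes the factorisation $Q_k=\bh^{(Q)}(uv)\cdot\bh^{(Q)}(v)$. Since $F^Q(B)=u^Q\cdot F^{2Q}(\bN(uv^2))$, expanding $(1-u^{2Q})^{-1}=\sum_{\ell\geq 0}u^{2Q\ell}$ together with the Nim generating function reduces the analysis to the bivariate identity
\[(1-u)\bh^{(Q)}(v)\bh^{(Q)}(uv) \;=\; \sum_{a\geq -1,\,b\geq 0} s^{(Q)}_{(a,b)}\cdot u^b\cdot v^{a+b},\]
which follows by a routine index shift from $s^{(q)}_{(a,b)}=h^{(q)}_a h^{(q)}_b-h^{(q)}_{a+1}h^{(q)}_{b-1}$ (with $h^{(q)}_{-1}=0$). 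Extracting the coefficient of $u^d v^{d+e}$ then produces
\[[u^d v^{d+e}]\bG \;=\; \sum_{Q=2^k,\,k\geq 0}\ \sum_{\substack{m,j\geq 0\\ (2m+2j+1)Q\leq d\\ e-(2m-2j-1)Q\geq -1}} F^{2Q}(\mc{N}_m)\cdot s^{(Q)}_{(e-(2m-2j-1)Q,\,d-(2m+2j+1)Q)}.\]

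To finish I would reconcile this expansion with the corollary's sum over $\Lambda_d$, which restricts to $Q\geq 2$ (equivalently $k\geq 1$) and omits the constraint $a:=e-(2m-2j-1)Q\geq -1$. Assuming $(2m+2j+1)Q\leq d$ and $e\geq d-1$, one estimates $a\geq(d-1)-(2m-2j-1)Q\geq 4jQ+2Q-1$. For $Q\geq 2$ this forces $a\geq 2Q-1\geq 3$, so the condition $a\geq -1$ is automatic and can be dropped. For $Q=1$ one has $\bh^{(1)}\equiv 1$, so $s^{(1)}_{(a,b)}$ is non-zero only for $(a,b)\in\{(0,0),(-1,1)\}$; but the same estimate yields $a\geq 4j+1\geq 1$, excluding both cases. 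Therefore the $Q=1$ contribution to $[u^d v^{d+e}]\bG$ vanishes in the range $e\geq d-1$, and the remaining sum is exactly the one indexed by $\Lambda_d$ asserted in the corollary. The main subtleties are the $u$-adic justification of the iteration and the careful tracking of the boundary term at $a=-1$ in the Schur identity; the rest is bookkeeping.
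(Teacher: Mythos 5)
Your proposal is correct and follows essentially the same route as the paper: iterate the functional equation of Theorem~\ref{thm:functional-eqn-G}, identify the product $A\cdot F^2(A)\cdots F^{2^{k-1}}(A)$ with $\bh^{(Q)}(uv)\cdot\bh^{(Q)}(v)$ via the telescoping Frobenius factorization, expand the geometric and Nim series, and extract the coefficient of $u^dv^{d+e}$. The only presentational differences are your explicit $u$-adic justification of the iteration and your elimination of the $Q=1$ term via the vanishing of $s^{(1)}_{(a,b)}$ (the paper instead observes directly that $\tfrac{u}{1+u}F^2(\bN(uv^2))$ contributes nothing when $e\geq d-1$); both checks are routine and equivalent.
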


\begin{proof}
    We first note that if $q=2q'$ then
    \[ \bh^{(q)} = \bh'\cdot F^2(\bh')\cdot F^4(\bh')\cdots F^{q'}(\bh').\]
    Similarly, we have
    \[ 1 + u + \cdots + u^{q-1} = (1+u)\cdot F^2(1+u)\cdot F^4(1+u)\cdots F^{q'}(1+u).\]
    It follows that if we define $\bg$ via
    \[ \bg(u,v) = \frac{\bh'(uv)\cdot \bh'(v)}{1+u}\]
    then
    \[ \bg\cdot F^2(\bg)\cdot F^4(\bg)\cdots F^{q'}(\bg) = \frac{\bh^{(q)}(uv)\cdot \bh^{(q)}(v)}{1 + u + \cdots + u^{q-1}}. \]
    We can now rewrite and iterate the formula in Theorem~\ref{thm:functional-eqn-G} to get
    \[ 
    \begin{aligned}
      G &= \frac{u}{1+u}\cdot F^2(\bN(uv^2)) + \bg\cdot F^2(G) \\
      &= \frac{u}{1+u}\cdot F^2(\bN(uv^2)) + \bg\cdot\frac{u^2}{1+u^2}\cdot F^4(\bN(uv^2))+\bg\cdot F^2(\bg)\cdot\frac{u^4}{1+u^4}\cdot F^8(\bN(uv^2))+\cdots
    \end{aligned}
    \]
    For $q=2q'$ powers of $2$, the general term in the above sum is
    \[
    \begin{aligned}
        \bg\cdot & F^2(\bg)\cdots F^{q'}(\bg)\cdot\frac{u^q}{1+u^q}\cdot F^{2q}(\bN(uv^2)) = \frac{\bh^{(q)}(uv)\cdot \bh^{(q)}(v)\cdot u^q}{1 + u + \cdots + u^{2q-1}} \cdot \sum_{m\geq 0} F^{2q}(\mc{N}_m)\cdot u^{2qm}\cdot v^{4qm} \\
        &= \bh^{(q)}(uv)\cdot \bh^{(q)}(v)\cdot(1-u)\cdot\left(\sum_{j\geq 0} u^{(2j+1)q}\right)\cdot \left(\sum_{m\geq 0} F^{2q}(\mc{N}_m)\cdot u^{2qm}\cdot v^{4qm}\right)
    \end{aligned}
    \]
    If we write $c_{a,b}$ for the coefficient of $u^av^b$ in $\bh^{(q)}(uv)\cdot \bh^{(q)}(v)\cdot(1-u)$ then we have for $b\geq 2a-1$ that
    \[ c_{a,b} = h^{(q)}_{b-a}\cdot h^{(q)}_a-h^{(q)}_{b-a+1}\cdot h^{(q)}_{a-1} = s^{(q)}_{(b-a,a)}.\]
    It follows that we can rewrite
    \[ \bG = \frac{u}{1+u}\cdot F^2(\bN(uv^2)) + \sum_{\substack{q=2^r\geq 2 \\ a,b,m,j\geq 0}} F^{2q}(\mc{N}_m)\cdot c_{a,b} \cdot u^{a+(2m+2j+1)q}\cdot v^{4mq+b}.\]
    To compute the coefficient of $u^d v^{d+e}$ for $e\geq d-1$ we set $a=d-(2m+2j+1)q$, $b=d+e-4mq$ and note that the inequality $e\geq d-1$ forces $b\geq 2a-1$. Moreover, for $e\geq d-1$, the coefficient of $u^dv^{d+e}$ in $\frac{u}{1+u}\cdot F^2(\bN(uv^2))$ is $0$. Using the fact that $b-a=e-(2m-2j+1)q$ it follows that the coefficient of $u^d v^{d+e}$ in $\bG$ is
    \[h^1(D^d\mc{R}(e)) = \sum_{\substack{q=2^r\geq 2 \\ m,j\geq 0}} F^{2q}(\mc{N}_m)\cdot s^{(q)}_{(e-(2m-2j-1)q,d-(2m+2j+1)q)}. \]
    Restricting further to parameters $m,j,q$ for which $d\geq (2m+2j+1)q$ (since otherwise the truncated Schur polynomial factor vanishes), we obtain the desired conclusion.
\end{proof}

\section{Weak Lefschetz for Artinian monomial complete intersections}\label{sec:Lefschetz}

In this section we consider the Artinian monomial complete intersections
    \[M_{\ul{a}} = \kk[T_1,\cdots,T_n]/\langle T_1^{1+a_1}\cdots T_n^{1+a_n}\rangle\]
and study the Weak Lefschetz Property (WLP) as a function of $a_1,\cdots,a_n$ and the characteristic of $\kk$. It is well-known that WLP holds for all $\ul{a}$ if $\op{char}(\kk)=0$ \cites{stanley,watanabe} (in fact, the Strong Lefschetz Property also holds, since $M_{\ul{a}}$ is the cohomology ring of a product of projective spaces), but the problem is more subtle in positive characteristic, and it is closely related to our earlier computation of cohomology, as explained next.

If we write $s=a_1+\cdots+a_n$ for the socle degree of $M_{\ul{a}}$, then WLP fails for $M_{\ul{a}}$ if and only if
    \[ \left(\frac{M_{\ul{a}}}{T\cdot M_{\ul{a}}}\right)_{e} \neq 0 \quad\text{ for some }e \geq \left\lfloor\frac{s}{2}\right\rfloor+1,\text{ if and only if}\quad\left(\frac{M_{\ul{a}}}{T\cdot M_{\ul{a}}}\right)_{e} \neq 0 \quad\text{ for }e = \left\lfloor\frac{s}{2}\right\rfloor+1,\]
    where $T=T_1+\cdots+T_n$. Using \eqref{eq:multigraded-comps-coh}, it follows that $M_{\ul{a}}$ fails WLP if and only if 
    \begin{equation}\label{eq:coh-characterization-WLP}
    H^1(\PP^{n-1},D^d\mc{R}(e))_{\ul{a}} \neq 0\text{ for some }d,e\text{ with }d+e=s,\ e\geq d-1.
    \end{equation}
It follows that failure of WLP is characterized by the \defi{support sets}
\begin{equation}\label{eq:def-supp-d-e} \op{supp}(d,e) = \left\{ \ul{a}\in\bb{Z}^n : H^1\left(\PP^{n-1},D^d\mc{R}(e)\right)_{\ul{a}} \neq 0\right\}
\end{equation}
which in addition satisfy the inclusion relations
\begin{equation}\label{eq:incl-supp-de}
    \op{supp}(d,e) \supseteq \op{supp}(d-f,e+f)\quad\text{ for }f\geq 0,\ e\geq d-1.
\end{equation}
    
The characterization \eqref{eq:coh-characterization-WLP} immediately implies that WLP holds for all $\ul{a}$ when $n=2$ \cites{HMNW,MZ}: indeed, we have $D^d\mc{R}(e) = \mc{O}_{\PP^1}(e-d)$, and the condition $e\geq d-1$ forces $H^1(\PP^1,\mc{O}_{\PP^1}(e-d))=0$. We will therefore focus on $n\geq 3$. In Section~\ref{subsec:WLP-from-socle} we characterize the socle degrees for which, in a fixed characteristic~$p$, all $M_{\ul{a}}$ satisfy WLP, while in Section~\ref{subsec:WLP-char2} we characterize the tuples $\ul{a}$ for which the algebras $M_{\ul{a}}$ satisfy WLP when $\op{char}(\kk)=2$.
       
\subsection{Socle degrees that guarantee the Weak Lefschetz Property}\label{subsec:WLP-from-socle} Throughout this section $\kk$ denotes a field of characteristic $p>0$, and $q$ denotes a power of $p$. Our goal is to prove Theorem~\ref{thm:WLP-from-socle}, describing the positive integers $s$ such that all monomial complete intersections $M_{\ul{a}}$ with socle degree $s$ satisfy WLP. 


\begin{proof}[Proof of Theorem~\ref{thm:WLP-from-socle}]
 We fix a socle degree $s$, and set $d=e=s/2$ if $s$ is even, and $d=(s+1)/2$, $e=(s-1)/2$ if $s$ is odd. The assertion that WLP holds for all algebras $M_{\ul{a}}$ with $s=a_1+\cdots+a_n$ is then equivalent to the cohomology vanishing
 \begin{equation}\label{eq:H1vanishing-smalld} H^1\left(\PP^{n-1},D^d\mc{R}(e)\right) = 0.\end{equation}
 Notice that the condition $s\leq 2p-2$ is equivalent to $d<p$, in which case \eqref{eq:H1vanishing-smalld} holds by \eqref{eq:h01-for-small-d} since $e\geq d-1$. We may then assume that $s\geq 2p-1$, or $d\geq p$, and find unique integers $t,q=p^k$ with
 \[ 1\leq t< p\leq q,\quad tq\leq d < (t+1)q.\]
 Using \cite{gao-raicu}*{(3.14)}, the vanishing \eqref{eq:H1vanishing-smalld} is equivalent to
 \begin{equation}\label{eq:H1van-bound-e} e \geq (t+n-2)q-n+1.\end{equation}
 
 Suppose first that $s$ is even, so $e=d\leq(t+1)q-1$, in which case \eqref{eq:H1van-bound-e} implies $n-2\geq(n-3)q\geq 2(n-3)$, which can only hold if $n=3,4$. If $n=3$ then we get $(t+1)q-2\leq e\leq (t+1)q-1$, hence
 \[ s = (2t+2)q-4 \quad \text{ or }\quad s = (2t+2)q-2.\]
 If $n=4$ then the inequalities $(t+1)q-1\geq e\geq (t+2)q-3$ hold if and only if $q=2$, which in turn forces $p=2$, $t=1$, and $e=3$, hence $s=6$. This shows that conditions (2), (3) are necessary when $s$ is even. If instead $s$ is odd, then $e=d-1\leq(t+1)q-2$. Condition \eqref{eq:H1van-bound-e} implies $n-3\geq (n-3)q$ which can only hold when $n=3$, which then forces $e=(t+1)q-2$, and $s=2e+1=(2t+2)q-3$, proving that (2), (3) are necessary also when $s$ is odd. It is now clear that in both cases (2), (3), the condition \eqref{eq:H1van-bound-e} is satisfied, which shows that conditions (2), (3) are sufficient and concludes our proof.
\end{proof}

\subsection{Weak Lefschetz Property in characteristic $2$}\label{subsec:WLP-char2} Throughout this section $\kk$ denotes a field of characteristic $p=2$, and $q$ denotes a power of $2$. Given $x\in\bb{Z}$ and $q=2^k$, $k\geq 1$, we write
\[ x = 2qm + r\text{ with }0\leq r<2q,\]
and define 
\begin{equation}\label{eq:def-theta-q}
 \theta_q(x) = \begin{cases}
     r & \text{if }r\leq q-1,\\
     2q-2-r & \text{if }q-1\leq r\leq 2q-2,\\
     -\infty & \text{if }r=2q-1.
 \end{cases}
\end{equation}

\begin{theorem}\label{thm:WLP-char-2}
    If $\op{char}(\kk)=2$ then the Weak Lefschetz Property holds for $M_{\ul{a}}$ if and only if
    \begin{equation}\label{eq:WLP-cond-char2} \sum_{i=1}^n \theta_q(a_i) \leq 2q-2 \text{ for all }q=2^k\quad\text{ with }\quad 2q>a_1\oplus a_2\oplus\cdots\oplus a_n.
    \end{equation}
\end{theorem}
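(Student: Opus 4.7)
The plan is to convert WLP failure into a coefficient-extraction problem for the character formula of Corollary~\ref{cor:coh-Ddr-using-Nim}, reduce it by telescoping to a single sum involving $F^{2q}(\mc{N}_m)$ and truncated complete symmetric polynomials, and finally analyze it as an arithmetic-progression-meets-box question modulo $2q$.

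First I combine \eqref{eq:coh-characterization-WLP} with the fact that each coefficient $[z^{\ul{a}}]\,h^1(D^d\mc{R}(e))$ is a nonnegative integer: $M_{\ul{a}}$ fails WLP iff
\[ \Sigma_{\ul{a}} := \sum_{d=0}^{\lfloor(s+1)/2\rfloor}\,[z^{\ul{a}}]\,h^1(D^d\mc{R}(s-d))\ >\ 0. \]
Substituting Corollary~\ref{cor:coh-Ddr-using-Nim} and swapping sums, for fixed $(q,m,j)$ the inner sum over $d$ telescopes: since $s^{(q)}_{(A,B)} = h^{(q)}_Ah^{(q)}_B - h^{(q)}_{A+1}h^{(q)}_{B-1}$ and $(A_d,B_d)\mapsto(A_d-1,B_d+1)$ as $d$ increases by $1$, setting $g(d):=h^{(q)}_{A_d+1}h^{(q)}_{B_d-1}$ gives $s^{(q)}_{(A_d,B_d)} = g(d+1)-g(d)$ with boundary vanishing at $d=(2m+2j+1)q$ (where $B=0$). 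This will yield
\[ \Sigma_{\ul{a}} = \sum_{q=2^k,\,k\geq 1}\,\sum_{m,j\geq 0}\,[z^{\ul{a}}]\,F^{2q}(\mc{N}_m)\cdot h^{(q)}_{\alpha_j}h^{(q)}_{\beta_j}, \]
with $\alpha_j:=\lfloor s/2\rfloor-(2m-2j-1)q$ and $\beta_j:=\lceil s/2\rceil-(2m+2j+1)q$. Since Nim and truncated-complete polynomials have nonnegative coefficients, so does the right-hand side, and $\Sigma_{\ul{a}}>0$ iff some single summand contributes.

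Next, for fixed $q=2^k\geq 2$ write $a_i=2qc_i+r_i$ with $0\leq r_i<2q$. Because $F^{2q}(\mc{N}_m)$ has monomial exponents in $2q\bb{Z}$ and $h^{(q)}_\alpha h^{(q)}_\beta$ has exponents in $[0,2q-2]$, the $(q,m,j)$-summand contributes $z^{\ul{a}}$ iff (i)~$r_i\leq 2q-2$ for all $i$, i.e.\ $\theta_q(a_i)\geq 0$; (ii)~$\oplus c_i = 0$, equivalently $\oplus a_i<2q$, which then forces $m=\tfrac{1}{2}\sum c_i$; and (iii)~$\alpha_j\in[L,H]$ where $L:=\sum\max(0,r_i-q+1)$ and $H:=\sum\min(r_i,q-1)$ satisfy $H-L=\sum\theta_q(a_i)$ and $L+H=\sum r_i$ (so $\beta_j\in[L,H]$ is then automatic from $\alpha_j+\beta_j=L+H$). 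Thus the $q$-contribution is positive iff the AP $\{\alpha_j\}_{j\geq 0}$ (step $2q$, starting at $\alpha_0$) meets $[L,H]$. Using $\sum r_i = s-4mq$ and the parity relation $\sum\theta_q(a_i)\equiv s\pmod 2$ (forced by $L,H\in\bb{Z}$ and $L+H=\sum r_i$), I will establish the unified identity
\[ \alpha_0 - H \ =\ q - \left\lceil\tfrac{1}{2}\sum_i\theta_q(a_i)\right\rceil, \]
valid for both parities of $s$, which shows $\alpha_0\leq H$ iff $\sum\theta_q(a_i)\geq 2q-1$. If $\alpha_0>H$ then $\alpha_j\geq\alpha_0>H$ for all $j\geq 0$, so the $q$-contribution vanishes. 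If $\alpha_0\leq H$, the hypothesis $H-L=\sum\theta_q(a_i)\geq 2q-1$ makes $[L,H]$ contain $2q$ consecutive integers and meet every residue class mod $2q$; the maximal $\tilde\alpha\in[L,H]$ with $\tilde\alpha\equiv\alpha_0\pmod{2q}$ then lies in $[\max(L,\alpha_0),H]$, so equals $\alpha_j$ for some $j\geq 0$. Combining, $\Sigma_{\ul{a}}>0$ iff some $q=2^k$ with $k\geq 1$ satisfies $2q>\oplus a_i$ and $\sum\theta_q(a_i)\geq 2q-1$, which is precisely the negation of \eqref{eq:WLP-cond-char2}.

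The main obstacle is the unified computation of $\alpha_0-H$: one must treat $s$ even and $s$ odd separately (with $\sum\theta_q(a_i)$ necessarily even respectively odd), verify that $(1+\sum\theta_q(a_i))/2$ is integral in the odd case, and recombine both results into a single ceiling formula. Once that identity is secured, the telescoping over $d$, the Nim-factor reduction via $\oplus a_i<2q$, and the residue-class matching fall out routinely.
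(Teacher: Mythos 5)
Your proposal is correct and follows essentially the same route as the paper's proof: reduce WLP failure to extracting the coefficient of $z_1^{a_1}\cdots z_n^{a_n}$ from the sum of the characters in Corollary~\ref{cor:coh-Ddr-using-Nim}, telescope the truncated Schur polynomials into products $h^{(q)}_u h^{(q)}_v$, and then analyze which exponent vectors occur via the quantity $\sum_i\theta_q(a_i)$ (your interval $[L,H]$ with $H-L=\sum_i\theta_q(r_i)$ is exactly the content of Lemma~\ref{lem:thetaq-meaning} and Corollary~\ref{cor:terms-in-hqu-hqv}). The one point where you go beyond the paper's write-up is the explicit residue-class matching modulo $2q$ showing that the abstract existence of a pair $(u,v)$ with $u-v\geq 2q-1$ forces one of the actual pairs $(\alpha_j,\beta_j)$, $j\geq 0$, to contribute — a step the paper treats as immediate — so your version is, if anything, slightly more complete.
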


Note that condition~\eqref{eq:WLP-cond-char2} is non-trivial only for finitely many values of $q$. Indeed, if $\sum_{i=1}^n a_i < q$ then
\[ \sum_{i=1}^n \theta_q(a_i) = \sum_{i=1}^n a_i \leq q-1 \leq 2q-2.\]
Even better, if we know that \eqref{eq:WLP-cond-char2} holds for some $q>\max_{i=1}^n(a_i)$ then it must also hold for all $\tilde{q}\geq q$, since the hypothesis implies $\theta_q(a_i)=a_i=\theta_{\tilde{q}}(a_i)$. It follows from Theorem~\ref{thm:WLP-char-2} that if we write $m=\max_{i=1}^n(a_i)$ then WLP holds for $M_{\ul{a}}$ if and only if
\[ \theta_q(a_i) \leq 2q-2 \quad\text{ for all }q=2^k\quad\text{ with }\quad \frac{a_1\oplus a_2\oplus\cdots\oplus a_n}{2} < q \leq 2m.\]

\begin{example}\label{ex:WLP-n=6}
    Suppose $n=6$ and $\ul{a}=(10, 3, 3, 2, 1, 1)$. We have $m=10$ and
    \[ a_1\oplus a_2\oplus\cdots\oplus a_n = 8,\]
    so it suffices to consider $q=8$ and $q=16$. We have $\theta_8(10) = 4$ and in all other cases $\theta_q(a_i)=a_i$. It follows that
    \[ \sum_{i=1}^n \theta_q(a_i) = \begin{cases}
        14 & \text{if }q = 8,\\
        20 & \text{if }q=16,
    \end{cases}\]
    and in both cases the inequality \eqref{eq:WLP-cond-char2} holds. It follows that
    \[\kk[T_1,T_2,T_3,T_4,T_5,T_6]/\langle T_1^{11}, T_2^4, T_3^4, T_4^3, T_5^2, T_6^2\rangle\]
    satisfies WLP in characteristic $2$. If we consider instead $\ul{a}=(9, 3, 2, 2, 2, 1)$ then $m=9$ and 
    \[ a_1\oplus a_2\oplus\cdots\oplus a_n = 9,\]
    so we again have to consider $q=8$ and $q=16$. We have
    \[ \sum_{i=1}^n \theta_q(a_i) = 15 > 2q-2 \quad\text{ for }q=8\]
    hence $WLP$ fails for 
    \[\kk[T_1,T_2,T_3,T_4,T_5,T_6]/\langle T_1^{10}, T_2^4, T_3^3, T_4^3, T_5^3, T_6^2\rangle.\]
\end{example}

\begin{lemma}\label{lem:thetaq-meaning}
    Consider integers $q\geq 1$ and $0\leq r\leq 2q-2$. For $\Delta\geq 0$, we can find $a,b$ satisfying
    \begin{equation}\label{eq:a-b-del-q} 
    r = a+b,\quad a-b \geq \Delta,\quad 0\leq a,b<q
    \end{equation}
    if and only if $\Delta \leq \theta_q(r)$.
\end{lemma}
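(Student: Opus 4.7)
\begin{proof}[Proof sketch]
The plan is to directly compute the maximum value of $a-b$ subject to the constraints $a+b=r$, $0\leq a,b<q$, and then observe that a valid pair with $a-b\geq\Delta$ exists if and only if this maximum is at least $\Delta$.

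Fix $r$ with $0\leq r\leq 2q-2$. Since $a+b=r$ is prescribed, maximizing $a-b$ is equivalent to maximizing $a$. The constraint $b=r-a\geq 0$ gives $a\leq r$, while $a\leq q-1$ and $b\leq q-1$ give $a\leq q-1$ and $a\geq r-q+1$. Hence the maximum allowable value of $a$ is $\min(r,q-1)$, and the minimum allowable value of $b$ is then $\max(0,r-q+1)$.

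If $r\leq q-1$, the binding constraint is $a\leq r$, so the optimal choice is $(a,b)=(r,0)$, giving $a-b=r=\theta_q(r)$. If instead $q-1\leq r\leq 2q-2$, the binding constraint is $a\leq q-1$, so the optimal choice is $(a,b)=(q-1,r-q+1)$, giving $a-b=(q-1)-(r-q+1)=2q-2-r=\theta_q(r)$. (Both formulas agree at $r=q-1$.) In either case, the maximum of $a-b$ over admissible pairs equals $\theta_q(r)$, so such a pair with $a-b\geq\Delta$ exists if and only if $\Delta\leq\theta_q(r)$, as claimed.
\end{proof}

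The only step requiring care is ensuring the two cases cover the full range $0\leq r\leq 2q-2$ and that the endpoint $r=q-1$ is handled consistently; both are immediate from the definition of $\theta_q$ in \eqref{eq:def-theta-q}. No deeper obstacle arises, since the problem reduces to optimizing a linear function on an interval determined by the inequalities $\max(0,r-q+1)\leq a\leq\min(r,q-1)$.
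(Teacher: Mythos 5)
Your proof is correct and is essentially the same argument as the paper's: both directions reduce to the observation that the maximum of $a-b$ over admissible pairs is attained at $(a,b)=(r,0)$ when $r\leq q-1$ and at $(a,b)=(q-1,r-q+1)$ when $q-1\leq r\leq 2q-2$, which yields exactly $\theta_q(r)$. Your framing as a linear optimization over the interval $\max(0,r-q+1)\leq a\leq\min(r,q-1)$ is just a cleaner packaging of the paper's two-case verification.
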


\begin{proof}
    For the \emph{only if} direction, suppose that there exist $a,b\geq 0$ satisfying \eqref{eq:a-b-del-q}. If $r\leq q-1$ then 
    \[ \theta_q(r) = r = a+b \geq a-b \geq \Delta.\]
    If instead $q\leq r \leq 2q-2$ then $a\leq q-1$ and $b=r-a \geq r-(q-1)$, hence
    \[ \Delta \leq a-b \leq (q-1) - (r-(q-1)) = \theta_q(r), \]
    as desired.

    For the \emph{if} direction, suppose that $\Delta\leq\theta_q(r)$. If $r\leq q-1$ then we can take $a=r$, $b=0$, in which case we have $a-b=r=\theta_q(r)\geq\Delta$, and \eqref{eq:a-b-del-q} holds. If $q\leq r\leq 2q-2$ then $a=q-1$ and $b=r-(q-1)$ satisfy \eqref{eq:a-b-del-q}, concluding our proof.
\end{proof}

As a direct consequence of Lemma~\ref{lem:thetaq-meaning}, we get the following characterization of the terms that appear in a tensor product of truncated symmetric polynomials.

\begin{corollary}\label{cor:terms-in-hqu-hqv}
    Consider non-negative integers $r_1,\cdots,r_n\leq 2q-2$. We have that $z_1^{r_1}\cdots z_n^{r_n}$ is a term in $h_u^{(q)}\cdot h_v^{(q)}$ for some $u,v$ with $u-v\geq\Delta$ if and only if
    \[ \sum_{i=1}^n \theta_q(r_i) \geq \Delta.\]
\end{corollary}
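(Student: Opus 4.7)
The plan is to unwind the definition of $h_u^{(q)}\cdot h_v^{(q)}$ as a monomial sum and reduce the statement to a coordinate-wise application of Lemma~\ref{lem:thetaq-meaning}. By definition,
\[
h_u^{(q)}\cdot h_v^{(q)} = \sum_{\substack{a_1+\cdots+a_n=u \\ 0\leq a_i<q}}\sum_{\substack{b_1+\cdots+b_n=v \\ 0\leq b_i<q}} z_1^{a_1+b_1}\cdots z_n^{a_n+b_n},
\]
so $z_1^{r_1}\cdots z_n^{r_n}$ appears as a term in $h_u^{(q)}\cdot h_v^{(q)}$ for some $u,v$ with $u-v\geq\Delta$ if and only if there exist non-negative integers $a_i,b_i$ with $0\leq a_i,b_i<q$, $r_i=a_i+b_i$ for each $i=1,\ldots,n$, and $\sum_i(a_i-b_i)\geq\Delta$ (taking $u=\sum a_i$ and $v=\sum b_i$).

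For the forward direction, given such a decomposition, Lemma~\ref{lem:thetaq-meaning} applied to each $r_i$ with $\Delta_i := a_i-b_i$ (which by construction is realized by a valid $(a_i,b_i)$) forces $a_i-b_i\leq\theta_q(r_i)$. Summing yields $\Delta\leq\sum_i(a_i-b_i)\leq\sum_i\theta_q(r_i)$, as desired.

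For the converse, assume $\sum_i\theta_q(r_i)\geq\Delta$. The hypothesis $r_i\leq 2q-2$ ensures $\theta_q(r_i)\geq 0$ for every $i$, so Lemma~\ref{lem:thetaq-meaning} applied with $\Delta_i=\theta_q(r_i)$ produces for each $i$ a decomposition $r_i=a_i+b_i$ with $0\leq a_i,b_i<q$ and $a_i-b_i\geq\theta_q(r_i)$. Setting $u=\sum_i a_i$, $v=\sum_i b_i$ gives $u-v=\sum_i(a_i-b_i)\geq\sum_i\theta_q(r_i)\geq\Delta$, and by construction $z_1^{r_1}\cdots z_n^{r_n}$ appears in $h_u^{(q)}\cdot h_v^{(q)}$.

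Since both implications follow immediately from the already-proved Lemma~\ref{lem:thetaq-meaning} applied coordinate-wise, there is no genuine obstacle; the only point to keep in mind is the role of the hypothesis $r_i\leq 2q-2$, which excludes the $\theta_q=-\infty$ case and makes the converse direction work without modification.
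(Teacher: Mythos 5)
Your proof is correct and is exactly the argument the paper intends: the paper presents this corollary as a "direct consequence" of Lemma~\ref{lem:thetaq-meaning}, obtained by expanding $h_u^{(q)}\cdot h_v^{(q)}$ monomially and applying the lemma coordinate-wise, which is precisely what you do. The only point worth noting is that in the forward direction some $a_i-b_i$ may be negative, where the bound $a_i-b_i\leq\theta_q(r_i)$ holds trivially since $\theta_q(r_i)\geq 0$ for $r_i\leq 2q-2$; your argument covers this in substance even if the phrasing leans on the lemma's $\Delta\geq 0$ hypothesis.
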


We are now ready to explain the characterization of WLP over a field of characteristic $2$.

\begin{proof}[Proof of Theorem~\ref{thm:WLP-char-2}]
    Using the identity
    \[ s^{(q)}_{(u,v)} = h^{(q)}_u\cdot h^{(q)}_v - h^{(q)}_{u+1}\cdot h^{(q)}_{v-1} \quad\text{ for }u\geq v, \]
    it follows that
    \begin{equation}\label{eq:telescope-qschur} \sum_{j\geq 0} s^{(q)}_{(u+j,v-j)} = h^{(q)}_u\cdot h^{(q)}_v.
    \end{equation}
Recall that failure of WLP is characterized by the support sets \eqref{eq:def-supp-d-e}, which satisfy \eqref{eq:incl-supp-de}. In particular, the condition
\[ \ul{a} \in \op{supp}(d,e) = \bigcup_{f\geq 0} \op{supp}(d-f,e+f)\quad\text{ for }e\geq d-1\]
is equivalent to the fact that $z_1^{a_1}\cdots z_n^{a_n}$ appears as a term in
\[ \sum_{f\geq 0} h^1(D^{d-f}\mc{R}(e+f)) = \sum_{\substack{q=2^r\geq 2 \\ m,j\geq 0}} F^{2q}(\mc{N}_m)\cdot h^{(q)}_{e-(2m-2j-1)q}\cdot h^{(q)}_{d-(2m+2j+1)q}\]
where the last equality follows by combining Corollary~\ref{cor:coh-Ddr-using-Nim} with \eqref{eq:telescope-qschur}. Writing $u=e-(2m-2j-1)q$, $v=d-(2m+2j+1)q$, we note that 
\[ u-v = e - d + (4j+2)q \geq 2q-1\text{ if }e\geq d-1.\]
In particular, we conclude that $M_{\ul{a}}$ fails WLP if and only if
\[ z_1^{a_1}\cdots z_n^{a_n}\text{ appears as a term in }F^{2q}(\mc{N}_m)\cdot h_u^{(q)}\cdot h_v^{(q)}\text{ for some }q=2^k\geq 2,\ u,v\geq 0,\text{ with }u-v\geq 2q-1.\]

Consider now a tuple $\ul{a}$ for which the condition above holds, and define $b_i,r_i$ such that
\begin{equation}\label{eq:def-bi-ri} a_i = 2qb_i + r_i,\ 0\leq r_i <2q\quad\text{ for }i=1,\cdots,n.
\end{equation}
It follows that $z_1^{b_1}\cdots z_n^{b_n}$ is a term in $\mc{N}_m$, and $z_1^{r_1}\cdots z_n^{r_n}$ is a term in $h_u^{(q)}\cdot h_v^{(q)}$. Since the Nim sum of $b_1,\cdots,b_n$ is zero, and the powers of $2$ that appear in the $2$-adic expansion of each $r_i$ are bounded by $q$, we get
\[ a_1\oplus \cdots \oplus a_n = r_1 \oplus \cdots \oplus r_n \leq 1 + 2 + 4  + \cdots + q = 2q-1 < 2q.\]
Moreover, applying Corollary~\ref{cor:terms-in-hqu-hqv} with $\Delta=2q-1$, we get
\[ \sum_{i=1}^n \theta_q(a_i) = \sum_{i=1}^n \theta_q(r_i) \geq 2q-1.\]
It follows that if $M_{\ul{a}}$ does not satisfy WLP, then condition \eqref{eq:WLP-cond-char2} must fail for some value of $q=2^k$.

Conversely, suppose that \eqref{eq:WLP-cond-char2} fails for some $q=2^k$, and define $b_i,r_i$ via \eqref{eq:def-bi-ri}. We have as before that $r_1\oplus\cdots\oplus r_n < 2q$, hence
\[ 2q > a_1 \oplus \cdots \oplus a_n = 2q(b_1\oplus\cdots\oplus b_n) + (r_1\oplus\cdots\oplus r_n),\]
which forces $b_1\oplus\cdots\oplus b_n=0$. The failure of \eqref{eq:WLP-cond-char2} translates into
\[ 2q - 1 \leq \sum_{i=1}^n \theta_q(a_i) = \sum_{i=1}^n \theta_q(r_i), \]
which implies $\theta_q(r_i)\neq -\infty$, and hence $r_i\leq 2q-2$ for all $i$. We can then apply Corollary~\ref{cor:terms-in-hqu-hqv} to conclude that $z_1^{r_1}\cdots z_n^{r_n}$ appears as a term in $h^{(q)}_u\cdot h^{(q)}_v$ for some $u-v\geq 2q-1$, and letting $m=b_1+\cdots + b_n$, we obtain that $z_1^{a_1}\cdots z_n^{a_n}$ appears as a term in $F^{2q}(\mc{N}_m)\cdot h_u^{(q)}\cdot h_v^{(q)}$, hence $WLP$ fails for $M_{\ul{a}}$, concluding our proof.
\end{proof}

\section*{Acknowledgements}
Experiments with Macaulay2 \cite{GS} have provided many valuable insights. The authors would like to thank Zhao Gao, Evan O'Dorney, Jenny Kenkel, Alessio Sammartano, Anurag Singh, Keller VandeBogert for helpful discussions regarding various aspects of this project. Marangone gratefully acknowledges that this research was supported in part by the Pacific Institute for the Mathematical Sciences. Raicu and Reed acknowledge the support of the National Science Foundation Grant DMS-2302341. Part of the material in this paper is based upon work supported by the National Science Foundation under Grant No. DMS-1928930 and by the Alfred P. Sloan Foundation under grant G-2021-16778, while Raicu and Reed were in residence at the Simons Laufer Mathematical Sciences Institute (formerly MSRI) in Berkeley, California, during the Spring 2024 semester. Preliminary work on this project was done during the \emph{Pragmatic Research school in Algebraic Geometry and Commutative Algebra} at the University of Catania in June 2023, when all the authors were in residence -- we thank Francesco Russo for his hospitality, and for providing us with the opportunity to participate.

\begin{bibdiv}
    \begin{biblist}

\bib{andersen-b2}{article}{
   author={Andersen, Henning Haahr},
   title={On the structure of the cohomology of line bundles on $G/B$},
   journal={J. Algebra},
   volume={71},
   date={1981},
   number={1},
   pages={245--258},
}

\bib{andersen-survey}{article}{
   author={Andersen, Henning Haahr},
   title={Representation theory via cohomology of line bundles},
   journal={Transform. Groups},
   volume={28},
   date={2023},
   number={3},
   pages={1033--1058},
   issn={1083-4362},
}

\bib{and-kan}{article}{
   author={Andersen, Henning Haahr},
   author={Kaneda, Masaharu},
   title={Cohomology of line bundles on the flag variety for type $G_2$},
   journal={J. Pure Appl. Algebra},
   volume={216},
   date={2012},
   number={7},
   pages={1566--1579},
}

\bib{cook}{article}{
   author={Cook, David, II},
   title={The Lefschetz properties of monomial complete intersections in
   positive characteristic},
   journal={J. Algebra},
   volume={369},
   date={2012},
   pages={42--58},
}

\bib{donkin}{article}{
   author={Donkin, Stephen},
   title={The cohomology of line bundles on the three-dimensional flag
   variety},
   journal={J. Algebra},
   volume={307},
   date={2007},
   number={2},
   pages={570--613},
}

\bib{gao-raicu}{article}{
   author={Gao, Zhao},
   author={Raicu, Claudiu},
   title={Cohomology of line bundles on the incidence correspondence},
   journal={Trans. Amer. Math. Soc. Ser. B},
   volume={11},
   date={2024},
   pages={64--97},
}

\bib{GRV}{article}{
    author={Gao, Zhao},
    author={Raicu, Claudiu},
    author={VandeBogert, Keller},
    title={Some questions arising from the study of cohomology on flag varieties},
    conference={
      title={Open Problems in Algebraic Combinatorics},
    },
    book={
      series={Proceedings of Symposia in Pure Mathematics},
      volume={110},
      publisher={American Mathematical Society},
    },
    pages={333--348},
    date={2024}
}

\bib{GS}{article}{
    author = {Grayson, Daniel R.},
    author = {Stillman, Michael E.},
    title = {Macaulay 2, a software system for research in algebraic geometry}, 
    journal = {Available at \url{http://www.math.uiuc.edu/Macaulay2/}}
}

\bib{griffith}{article}{
   author={Griffith, Walter Lawrence, Jr.},
   title={Cohomology of flag varieties in characteristic $p$},
   journal={Illinois J. Math.},
   volume={24},
   date={1980},
   number={3},
   pages={452--461},
}

\bib{han-monsky}{article}{
   author={Han, C.},
   author={Monsky, P.},
   title={Some surprising Hilbert-Kunz functions},
   journal={Math. Z.},
   volume={214},
   date={1993},
   number={1},
   pages={119--135},
}

\bib{HMNW}{article}{
   author={Harima, Tadahito},
   author={Migliore, Juan C.},
   author={Nagel, Uwe},
   author={Watanabe, Junzo},
   title={The weak and strong Lefschetz properties for Artinian
   $K$-algebras},
   journal={J. Algebra},
   volume={262},
   date={2003},
   number={1},
   pages={99--126},
}

\bib{humphreys-G2}{article}{
   author={Humphreys, J. E.},
   title={Cohomology of line bundles on $G/B$ for the exceptional group
   $G_2$},
   booktitle={Proceedings of the Northwestern conference on cohomology of
   groups (Evanston, Ill., 1985)},
   journal={J. Pure Appl. Algebra},
   volume={44},
   date={1987},
   number={1-3},
   pages={227--239},
}

\bib{kumar}{article}{
   author={Kumar, Shrawan},
   title={Equivariant analogue of Grothendieck's theorem for vector bundles
   on $\PP^1$},
   conference={
      title={A tribute to C. S. Seshadri},
      address={Chennai},
      date={2002},
   },
   book={
      series={Trends Math.},
      publisher={Birkh\"{a}user, Basel},
   },
   isbn={3-7643-0444-2},
   date={2003},
   pages={500--501},
   review={\MR{2017599}},
}

\bib{liu-flag1}{article}{
   author={Liu, Linyuan},
   title={On the cohomology of line bundles over certain flag schemes},
   journal={J. Combin. Theory Ser. A},
   volume={182},
   date={2021},
   pages={Paper No. 105448, 25},
}

\bib{liu-polo}{article}{
   author={Liu, Linyuan},
   author={Polo, Patrick},
   title={On the cohomology of line bundles over certain flag schemes II},
   journal={J. Combin. Theory Ser. A},
   volume={178},
   date={2021},
   pages={Paper No. 105352, 11},
}

\bib{liu}{article}{
   author={Liu, Linyuan},
   title={Cohomologie des fibr\'es en droites sur ${\rm SL}_3/B$ en
   caract\'eristique positive: deux filtrations et cons\'equences},
   language={French, with English and French summaries},
   journal={J. Eur. Math. Soc. (JEMS)},
   volume={26},
   date={2024},
   number={4},
   pages={1365--1422},
}

\bib{lund-nick}{article}{
   author={Lundqvist, Samuel},
   author={Nicklasson, Lisa},
   title={On the structure of monomial complete intersections in positive
   characteristic},
   journal={J. Algebra},
   volume={521},
   date={2019},
   pages={213--234},
}

\bib{maak}{article}{
   author={Maakestad, Helge},
   title={Modules of principal parts on the projective line},
   journal={Ark. Mat.},
   volume={42},
   date={2004},
   number={2},
   pages={307--324},
}

\bib{MMN}{article}{
   author={Migliore, Juan C.},
   author={Mir\'o-Roig, Rosa M.},
   author={Nagel, Uwe},
   title={Monomial ideals, almost complete intersections and the weak
   Lefschetz property},
   journal={Trans. Amer. Math. Soc.},
   volume={363},
   date={2011},
   number={1},
   pages={229--257},
}

\bib{MZ}{article}{
   author={Migliore, Juan},
   author={Zanello, Fabrizio},
   title={The Hilbert functions which force the weak Lefschetz property},
   journal={J. Pure Appl. Algebra},
   volume={210},
   date={2007},
   number={2},
   pages={465--471},
}

\bib{nick}{article}{
   author={Nicklasson, Lisa},
   title={The strong Lefschetz property of monomial complete intersections
   in two variables},
   journal={Collect. Math.},
   volume={69},
   date={2018},
   number={3},
   pages={359--375},
}

\bib{odorney}{article}{
    author={O'Dorney, Evan},
    title={Even-carry polynomials and cohomology of line bundles on the incidence correspondence in positive characteristic},
    journal = {arXiv},
    number = {2404.04166},
    date={2024}
}

\bib{rai-surv}{article}{
   author={Raicu, Claudiu},
   title={Homological invariants of determinantal thickenings},
   journal={Bull. Math. Soc. Sci. Math. Roumanie (N.S.)},
   volume={60(108)},
   date={2017},
   number={4},
   pages={425--446},
}

\bib{rai-vdb}{article}{
    author={Raicu, Claudiu},
    author={VandeBogert, Keller},
    title={Stable sheaf cohomology on flag varieties},
    journal = {arXiv},
    number = {2306.14282},
    date={2023}
}

\bib{singh}{article}{
   author={Singh, Anurag K.},
   title={$p$-torsion elements in local cohomology modules},
   journal={Math. Res. Lett.},
   volume={7},
   date={2000},
   number={2-3},
   pages={165--176},
}

\bib{stanley}{article}{
   author={Stanley, Richard P.},
   title={Weyl groups, the hard Lefschetz theorem, and the Sperner property},
   journal={SIAM J. Algebraic Discrete Methods},
   volume={1},
   date={1980},
   number={2},
   pages={168--184},
}

\bib{sun}{article}{
   author={Sun, Xiaotao},
   title={Direct images of bundles under Frobenius morphism},
   journal={Invent. Math.},
   volume={173},
   date={2008},
   number={2},
   pages={427--447},
}

\bib{watanabe}{article}{
   author={Watanabe, Junzo},
   title={The Dilworth number of Artinian rings and finite posets with rank
   function},
   conference={
      title={Commutative algebra and combinatorics},
      address={Kyoto},
      date={1985},
   },
   book={
      series={Adv. Stud. Pure Math.},
      volume={11},
      publisher={North-Holland, Amsterdam},
   },
   date={1987},
   pages={303--312},
}
    \end{biblist}
\end{bibdiv}

\end{document}